\documentclass[a4paper]{article}
\usepackage[utf8]{inputenc}
\usepackage{amsthm,amssymb,amsmath,enumerate,graphicx}
\usepackage[usenames,dvipsnames]{color}
\usepackage{tikz}
\usepackage[british]{babel}
\usepackage{caption}
\usepackage{subcaption}
\usetikzlibrary{positioning,fit,shapes,calc}
\usetikzlibrary{decorations.pathmorphing,decorations.shapes}

\newcommand{\m}{m}

\usetikzlibrary{calc,shadings}

\usepackage{pgfplots}


\newenvironment{customlegend}[1][]{%
	\begingroup
	\csname pgfplots@init@cleared@structures\endcsname
	\pgfplotsset{#1}%
}{%
\csname pgfplots@createlegend\endcsname
\endgroup
}%

\def\addlegendimage{\csname pgfplots@addlegendimage\endcsname}


\pgfkeys{/pgfplots/number in legend/.style={%
		/pgfplots/legend image code/.code={%
			\node at (0.295,-0.0225){#1};
		},%
	},
}

\makeatletter
\def\@cite#1#2{{\normalfont[{\bfseries#1\if@tempswa , #2\fi}]}}
\makeatother

\let\eps\varepsilon 
\renewcommand{\epsilon}{\varepsilon}

\renewcommand{\subset}{\subseteq}

\theoremstyle{plain}

\newtheorem{theorem}{Theorem}[section]
\newtheorem{corollary}[theorem]{Corollary}
\newtheorem{lemma}[theorem]{Lemma}
\newtheorem{fact}[theorem]{Fact}
\newtheorem{claim}[theorem]{Claim}
\newtheorem{conjecture}[theorem]{Conjecture}


\definecolor{myred}{RGB}{0,0,0}
\definecolor{mygreen}{RGB}{0,0,0}
\definecolor{myblue}{RGB}{0,0,0}


\newcommand{\kn}{K_{n,n}}

\newcommand{\bitop}[1]{\overline{{#1}}}
\newcommand{\bibot}[1]{\underline{{#1}}}

\title{Local colourings and monochromatic partitions in complete bipartite graphs}
\author{\bigskip Richard Lang and
	Maya Stein\thanks{The second author was supported by Fondecyt Regular no.~1140766, and both authors acknowledge support by the Millenium Nucleus Information and Coordination in Networks.}\\Universidad de Chile\\ 
	Santiago, Chile\\
	\texttt{rlang@dim.uchile.cl, mstein@dim.uchile.cl}}

\begin{document}

\maketitle

\begin{abstract}
We show that  for any $2$-local colouring of the edges of the balanced complete bipartite graph $K_{n,n}$, its vertices can be covered with at most~$3$ disjoint monochromatic paths. And, we can cover almost all vertices of any  complete or balanced complete bipartite $r$-locally
coloured graph with $O(r^2)$  disjoint monochromatic cycles.\\ 
We also determine the $2$-local bipartite Ramsey number of a path almost exactly: Every $2$-local colouring of the edges of $K_{n,n}$ contains a monochromatic path on $n$ vertices.
\\ \smallskip
\noindent \textbf{MSC:} 05C38, 05C55.
\end{abstract}

\section{Introduction}
The problem of partitioning a graph into few monochromatic paths or cycles, first formulated explicitly in the beginning of the 80's~\cite{Gya83}, 
has lately received a fair amount of attention. Its origin lies in Ramsey theory and its subject are complete graphs (later
substituted with other types of graphs), whose edges are coloured with $r$ colours. Call such a colouring an $r$-colouring; 
note that
this need not be a proper edge-colouring. The challenge is now to find a small number of disjoint monochromatic paths, which 
together cover the vertex set of the underlying graph. Or, instead of disjoint monochromatic paths, we might ask for disjoint monochromatic cycles. 
Here, single vertices and edges count as cycles. Such a cover is called a monochromatic path partition, or a monochromatic cycle partition, respectively. 
It is not difficult to construct $r$-colourings that do not allow for partitions into less than $r$ paths, or cycles.\footnote{For instance, 
	take vertex sets $V_1,\ldots, V_r$ with $|V_i|=2^i$, and for $i\leq j$ give all $V_i$--$V_j$ edges colour $i$.}

At first, the problem was studied mostly for $r=2$, and the complete graph~$K_n$ as the host graph. In this situation, a partition into two disjoint paths always exists~\cite{GG67}, 
regardless of the size of $n$. Moreover, we can require these paths to have different colours. An extension of this fact, namely that every $2$-colouring of $K_n$ has a partition into two monochromatic 
cycles of different colours was conjectured by Lehel, and verified by Bessy and Thomass\'e~\cite{BT10}, after 
preliminary work for large $n$~\cite{All08,LRS98}.

A generalisation of these two results for other values of $r$, i.e.~that any $r$-coloured $K_n$ can be partitioned into $r$ monochromatic paths, or  into $r$ monochromatic cycles,  was conjectured by Gy\'arf\'as~\cite{Gya89} and by Erd\H os, Gy\'arf\'as and Pyber~\cite{EGP91}, respectively. The conjecture for cycles was recently disproved by Pokrovskiy~\cite{Pok14}. He gave counterexamples for all $r\geq 3$, but he also showed that the conjecture for paths is true for $r=3$.
Gy\'arf\'as, Ruszink\'o, S\'ark\"ozy and Szemer\'edi~\cite{GRSS06} showed that any $r$-coloured $K_n$ can be partitioned into $O(r\log r)$ monochromatic cycles, improving an earlier bound from~\cite{EGP91}.

Monochromatic path/cycle partitions have also been studied for bipartite graphs, mainly for $r=2$.
A $2$-colouring of $K_{n,n}$ is called
a split colouring if
there is a colour-preserving homomorphism from the edge-coloured $K_{n,n}$ to a properly edge-coloured $K_{2,2}$. Note that any split colouring allows for a partition into three paths, but not always into two. However, split colourings are the only `problematic' colourings, as the following result shows.

\begin{theorem}[Pokrovskiy~\cite{Pok14}]
\label{thm:pokrovskiy}
Let the edges of $\kn$ be coloured with 2 colours; then $\kn$ can be partitioned into two paths of distinct colours or the colouring is split. 
\end{theorem}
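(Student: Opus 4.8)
The plan is to argue by extremality. Write $A$ and $B$ for the two sides of $\kn$. Among all pairs consisting of a (possibly empty) red path $P$ and a (possibly empty) blue path $Q$ with $V(P)\cap V(Q)=\emptyset$, choose one maximising $|V(P)|+|V(Q)|$ and, subject to that, with $|V(P)|$ as large as possible. If $V(P)\cup V(Q)=V(\kn)$ we are done, reading an empty path as having whichever colour is missing; so we may assume some vertex $u$ is left uncovered, say $u\in A$, and write $U$ for the set of uncovered vertices. I want to show that the colouring must then be split.

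First I would clear away the degenerate cases. Suppose $P=\emptyset$. Then every edge with both endpoints outside $Q$ is blue, since otherwise it is a strictly better red path; in particular the blue graph is complete on $U$. As $Q$ cannot be extended, the edges from each endpoint of $Q$ into the opposite side of $U$ are all red, and pushing this observation — together with the fact that no blue path through $U$ may exceed $Q$ in size, and with the secondary choice of $|V(P)|$ — either contradicts maximality or displays $A$ and $B$ as split into two classes each, realising the required pattern. The case $Q=\emptyset$ is handled symmetrically with the two colours exchanged (now the red graph is complete on $U$, and one uses that $\kn$ is balanced). So from now on both $P$ and $Q$ are nonempty, with endpoints $p_1,p_s$ and $q_1,q_t$ respectively.

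The heart of the matter is a rotation-and-exchange analysis built around $u$ and, when convenient, a second uncovered vertex $u'\in B$. Since $u$ cannot be appended to $P$, the edge from $u$ to any endpoint of $P$ lying in $B$ is blue; fixing one end of $P$ and rotating, the same holds for the far endpoint of every path with vertex set $V(P)$ obtainable by rotations, so $u$ is blue-complete to a whole reference subset of $B$ coming from $P$. Dually, $u$ is red-complete to the analogous subset of $B$ built from $Q$. If $P$ or $Q$ has both endpoints in $A$, one extracts the same information from $u'\in B$ instead. The colour of $uu'$ then fuels a further move: if $uu'$ is red, then for each vertex $p'\in A$ that is an endpoint of a rotation of $P$ the path lengthens along $p'\,u'\,u$ unless $u'p'$ is blue, and symmetrically when $uu'$ is blue; there are also moves that transfer a single vertex from $P$ to $Q$ or back to make room, each again either enlarging the cover or fixing the colour of a large bipartite slice. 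Iterating this repertoire, each vertex of $A$ is forced into one of just two classes according to the colours it sends to the reference sets assembled from the endpoints of $P$ and $Q$, each vertex of $B$ likewise, and the constraints glue these two bipartitions together in the complementary pattern — that is, the colouring is split.

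The step I expect to be the real obstacle is precisely this final synthesis. The individual rotations and one-vertex exchanges are routine, but they interact awkwardly with parity: whether the endpoints of $P$ and $Q$ lie on the same side or on opposite sides governs which moves are available and which of $u,u'$ supplies the constraint, so the number of configurations to check grows quickly. Moreover, the constraints gathered this way initially say only that the colouring looks split on $V(P)\cup V(Q)$ with respect to the chosen endpoints; upgrading this to an honest split colouring of all of $\kn$ needs one more pass, in which the secondary maximality of $|V(P)|$ is used to pin down the colours of the remaining ambiguous edges and to check that the two sides' bipartitions are mutually consistent. Making that bookkeeping watertight — rather than any single ingenious step — is where the bulk of the work lies.
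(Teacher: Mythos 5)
This theorem is not proved in the paper at all: it is quoted verbatim from Pokrovskiy~\cite{Pok14}, so there is no in-paper argument to compare yours against, and the burden on your proposal is to actually establish the result. It does not. What you have written is a strategy outline (extremal choice of a red path $P$ and a blue path $Q$ maximising $|V(P)|+|V(Q)|$, followed by rotation and one-vertex exchanges), and the decisive step --- showing that the accumulated colour constraints force the colouring to be split on all of $\kn$ --- is exactly the part you defer, as you yourself acknowledge. In Pokrovskiy's paper this is the content of a multi-page case analysis; declaring it ``bookkeeping'' does not discharge it, because in this problem the split colourings are genuine extremal configurations, so every maximality argument must terminate in one of them rather than in a contradiction, and identifying precisely how each branch of the case analysis lands in the split structure is the whole difficulty.

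Two concrete holes in the sketch as written. First, the rotation step: you claim $u$ becomes blue-complete to ``a whole reference subset of $B$'' consisting of far endpoints of rotations of $P$. Rotations of a red path require red chords from an endpoint into the interior of $P$; in a split colouring such chords are exactly what is missing, so the reference set can degenerate to a single vertex and the argument extracts essentially no information --- which is precisely the case you must analyse, not the case you may wave away. Second, your fallback ``if $P$ or $Q$ has both endpoints in $A$, use a second uncovered vertex $u'\in B$'' can be unavailable: if both paths have both endpoints in $B$, then the uncovered set meets $A$ in two more vertices than it meets $B$, and it is entirely possible that every vertex of $B$ is covered while two vertices of $A$ are not, so no such $u'$ exists. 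The degenerate case $P=\emptyset$ also ends with ``either contradicts maximality or displays\dots the required pattern,'' which is the conclusion restated rather than derived. The overall approach (extremality plus local exchanges) is in the right family of techniques, but as it stands this is a plan for a proof, not a proof.
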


Split colourings can be generalised to more colours~\cite{Pok14}. This gives  a lower bound of $2r-1$ on the path/cycle partition number for $K_{n,n}$. For $r=3$, this bound is asymptotically correct~\cite{LSS15}.
For an upper bound, Peng, R\"odl and Ruci\'nski~\cite{PRR02} showed that any $r$-coloured $K_{n,n}$ can be partitioned into $O(r^2 \log r)$ monochromatic cycles, improving a result of Haxell~\cite{Hax97}. We improve this bound to $O(r^2)$.

\begin{theorem}\label{thm:bip}
For every $r \geq 1$ there is  an $n_0$ such that for $n \geq n_0$, for any $r$-locally coloured  $K_{n,n}$, we need at most $4 r^2$ disjoint monochromatic cycles to cover all its vertices.
\end{theorem}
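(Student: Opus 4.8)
The plan is to use the regularity method together with \L{}uczak's connected-matching technique, reducing the whole problem to a combinatorial statement about covering an essentially $r$-coloured near-complete balanced bipartite graph by few monochromatic connected matchings.

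First I would fix a small density constant $d>0$ and a much smaller regularity parameter $\epsilon$, and apply the multicolour bipartite regularity lemma to the given $r$-local colouring of $\kn$, obtaining an exceptional set of at most $\epsilon n$ vertices and an equipartition of each side into $t$ clusters, with all but an $\epsilon$-fraction of the cluster pairs $\epsilon$-regular in every colour; the reduced graph $R$ on the clusters, with each edge labelled by the colours of density at least $d$ in the corresponding pair, is then $K_{t,t}$ minus an $\epsilon$-fraction of its edges. The connected-matching blow-up lemma guarantees that a monochromatic connected matching of a colour $c$ in $R$ --- a matching through colour-$c$ edges all lying in one component of the colour-$c$ subgraph of $R$ --- blows up to a single colour-$c$ cycle in $\kn$ covering all but an $o(1)$-fraction of the vertices in the clusters it meets. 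So it suffices to cover $V(R)$ by $O(r^2)$ monochromatic connected matchings: blowing these up yields $O(r^2)$ monochromatic cycles covering all but $o(n)$ vertices of $\kn$. A point requiring care here, special to local colourings, is that assigning to each cluster pair a single majority colour does \emph{not} make $R$ an $r$-local colouring, since different vertices of a cluster may use disjoint palettes; one instead retains, for each cluster, only the boundedly many colours that dominate a positive fraction of its cluster pairs, and shows that the resulting structure still behaves, for the purpose of finding connected matchings, like an $r$-coloured near-complete bipartite graph.

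The core is thus the combinatorial claim: in an $r$-coloured balanced bipartite graph obtained from $K_{N,N}$ by deleting at most $\gamma N^2$ edges, the vertex set can be covered by $O(r^2)$ monochromatic connected matchings, with a small enough constant to yield $4r^2$ after the reduction. This is where the improvement over the $O(r^2 \log r)$ bound of Peng, R\"odl and Ruci\'nski must be made. I would extract monochromatic connected matchings greedily: in any balanced remainder some colour has a component carrying a matching of size $\Omega(N/r)$, obtained from a left vertex that sees a colour $c$ on $\ge N/r$ edges together with the left vertices sending many colour-$c$ edges into that neighbourhood. The main obstacle is that iterating this blindly only removes a $\Theta(1/r)$-fraction at a time, costing $\Theta(r\log N)$ matchings --- a bound depending on $n$, not on $r$ alone. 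Beating this needs a dichotomy: either some colour has a component carrying a matching covering a \emph{constant} fraction of the remaining vertices, so that boundedly many steps finish the job, or the colouring is forced into a rigid structure generalising the split colourings of Theorem~\ref{thm:pokrovskiy}, which one handles directly --- presumably by induction on $r$ after peeling off the dominant colours and the clusters they almost span. Pushing the constant down to roughly $4r^2$ is the delicate part, and I expect this combinatorial claim to be by far the hardest step.

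Finally, to cover \emph{all} vertices of $\kn$ rather than all but $o(n)$, I would reserve before the regularity step a small random balanced vertex set $W$, with $|W|$ comfortably larger than the eventual leftover but still $o(n)$; a Chernoff estimate shows $W$ still induces a complete balanced bipartite graph carrying an $r$-local colouring in which every leftover vertex still sees a popular colour into $W$. Running the main argument on $\kn - W$ and then applying the combinatorial claim a second time to the leftover together with $W$ covers everything with a further $O(r^2)$ cycles; using roughly $2r^2$ cycles in each phase and absorbing lower-order terms into the choice of $n_0$ then gives the stated bound. The regularity machinery and this reservoir cleanup are routine by now, so essentially all the real work is in the combinatorial claim about connected matchings.
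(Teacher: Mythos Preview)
Your overall framework (regularity, connected matchings, \L{}uczak blow-up, then absorb the leftover) matches the paper's, but you have misidentified where the difficulty lies and are missing the key idea that makes the combinatorial core easy.

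Your central assertion that ``assigning to each cluster pair a single majority colour does \emph{not} make $R$ an $r$-local colouring'' is incorrect, and this wrong turn is exactly what leads you to the vague and admittedly hard greedy/dichotomy programme. In fact the majority colouring of the reduced graph \emph{is} $r$-local: if a cluster $V_i$ had $r+1$ neighbours in $R$ receiving $r+1$ distinct majority colours, then each of those colours has positive density (at least $1/(2r^2)$ by a Lemma of Gy\'arf\'as et al.) in its pair, so by regularity all but at most $(r+1)\epsilon|V_i|<|V_i|$ vertices of $V_i$ send an edge in each of the $r+1$ colours, contradicting $r$-locality of the original colouring. One preliminary step is needed---the total number of colours is unbounded, so one first shows (again via the Gy\'arf\'as et al.\ lemma) that all but $\epsilon n^2$ edges use only $t(\epsilon,r)$ colours, and applies the multicolour regularity lemma only to those.

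Once $R$ is known to be $r$-locally coloured, the matching cover is a short clean induction on $r$ rather than the dichotomy you outline. Pick any edge $uv$ of the (near-complete bipartite) $R$; the closed monochromatic neighbourhoods of $u$ and of $v$ together give at most $2r-1$ monochromatic components covering $V(R)$. Take a maximum matching inside each of these components in turn; then every leftover vertex has lost at least one colour from its palette, so the remaining induced subgraph is $(r-1)$-locally coloured, and one iterates. This yields at most $(2r-1)r$ monochromatic connected matchings, and hence at most $(2r-1)r$ cycles covering all but $o(n)$ vertices. No structure theorem generalising split colourings is needed.

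Your absorption step also diverges from the paper and, as written, has a gap: you propose to run the ``combinatorial claim'' again on the reservoir $W$ together with the leftover, but that claim produces connected matchings, and turning matchings into cycles requires regularity on a graph of order $\Theta(n)$, not $o(n)$. The paper instead reserves, before regularising, a monochromatic $\epsilon$-hamiltonian balanced bipartite subgraph $[A_2,B_2]$ of linear size (found via a density lemma of Peng--R\"odl--Ruci\'nski type), and then absorbs the $o(n)$ leftover vertices using a lemma of Conlon and Stein: if $|B|\le |A|/r^{r+3}$ then the complete bipartite graph between $A$ and $B$, $r$-locally coloured, can be covered by at most $r^2$ disjoint monochromatic cycles. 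Applying this once for each side costs $2r^2$ further cycles, and the remainder of $[A_2,B_2]$ contributes one Hamilton cycle, giving $(2r-1)r+2r^2+1\le 4r^2$ in total.
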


Theorem~\ref{thm:bip} follows immediately from Theorem~\ref{thm:complete} (b) below.
Let us mention that the monochromatic cycle partition problem has also been studied for multipartite graphs~\cite{SS14}, and for 
arbitrary graphs~\cite{BBG+14,Sar11}, or replacing paths or cycles with other graphs~\cite{GSS+12,SS00,SSS12b}.

\bigskip

Our main focus in this paper is on monochromatic cycle partitions for {\it local colourings} (Theorem~\ref{thm:bip} being only a side-product of our local colouring results). Local colourings are a natural way to generalise $r$-colourings.
A colouring is $r$-local if no vertex is adjacent to more than $r$ edges of distinct colours. Local colorings have appeared mostly in the context of Ramsey theory~\cite{Bie03,CT93,GLN+87,GLS+87,RT97,Sch97,Tru92,TT87}. 

With respect to monochromatic path or cycle partitions,
Conlon and Stein~\cite{CS16} recently generalised some of the above mentioned results to $r$-local colourings. They show that for any $r$-local colouring of $K_n$, there is a partition into $O(r^2 \log r)$ monochromatic cycles, and, if $r=2$, then two cycles suffice. 
In this paper we  improve their general bound for complete graphs, and give the first bound for  monochromatic cycle partitions in bipartite graphs.  In both cases, $O(r^2)$ cycles suffice. 

\begin{theorem}\label{thm:complete}
For every $r \geq 1$ there is  an $n_0$ such that for $n \geq n_0$ the following holds. 
\begin{enumerate}[(a)]
\item  If $K_{n}$ is $r$-locally coloured, then its vertices can be covered with at most $2 r^2$ disjoint monochromatic cycles.
\item If $K_{n,n}$ is $r$-locally coloured, then its vertices can be covered with at most~$4 r^2$ disjoint monochromatic cycles.
\end{enumerate}
\end{theorem}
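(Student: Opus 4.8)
\medskip\noindent\emph{Proof idea.} The plan is to run the regularity method for monochromatic cycle partitions — reduced graph, monochromatic connected matchings, blow-up to cycles, absorption of the remainder — preceded by a structural reduction that exploits locality to bring the number of relevant colours down to $O(r^2)$. For the structural reduction, note that every edge $uv$ receives a colour lying in both $F_u := \{\col(uw): w\sim u\}$ and $F_v$, so the sets $F_v$ form an intersecting family of sets of size at most $r$. Writing $N$ for the number of vertices ($N=n$, resp.\ $N=2n$), we have $\sum_v|F_v|\le rN$, so at most $2r^2$ colours are \emph{popular}, i.e.\ belong to $F_v$ for at least $N/(2r)$ vertices $v$. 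On the other hand, at each vertex $v$ the plurality colour $p(v)$ occurs on at least $(n-1)/r$ edges and is therefore popular; call $p(v)$ the \emph{primary} colour of $v$. Thus $V$ splits into at most $2r^2$ classes $V_c=p^{-1}(c)$, and every vertex of $V_c$ sends at least $n/(2r)-o(n)$ edges of colour $c$.

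Next I would apply a regularity lemma that is $\eps$-regular simultaneously in each of the (at most $2r^2$) popular colours — legitimate since there are only boundedly many of them — obtaining clusters $V_1,\dots,V_k$ and a reduced graph $R$ (bipartite in the $\kn$ case), each cluster carrying its primary colour, and each pair $V_iV_j$ assigned a popular colour in which it is regular of density at least $d$, if one exists (edges in non-popular colours simply leave no trace in $R$). Regularity inheritance then guarantees that each cluster of primary colour $c$ is joined in colour $c$ to at least $k/(4r)$ clusters of $R$.

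The heart of the proof, and the step I expect to be the main obstacle, is to cover all but $o(k)$ clusters of $R$ by at most $2r^2$ (resp.\ $4r^2$) \emph{monochromatic connected matchings} — matchings whose edges all have a common colour and lie in a single monochromatic component of $R$. For a fixed primary colour $c$ there are only the at most $4r$ components of the colour-$c$ subgraph of $R$ that contain a colour-$c$ neighbourhood (each such component has more than $k/(4r)$ vertices), and inside each of them the type-$c$ clusters have colour-$c$ degree at least $k/(4r)$, so a Hall/König-type matching argument yields a colour-$c$ connected matching covering almost all of them. Summing over the at most $2r^2$ primary colours gives a bound polynomial in $r$; squeezing this down to exactly $2r^2$ (resp.\ $4r^2$) — in particular dealing with a colour class spread over a single large component, which has to be handled by taking near-spanning matchings component by component and re-charging — is the delicate point, and is where the stated constants are paid for. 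The bipartite case runs along the same lines with $R$ bipartite, the bound picking up a factor $2$.

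Finally, each monochromatic connected matching of $R$ is turned into one monochromatic cycle through almost all vertices of the clusters it meets, via the usual blow-up of regular pairs, giving at most $2r^2$ (resp.\ $4r^2$) disjoint monochromatic cycles covering all but $o(n)$ vertices. The exceptional set together with the uncovered cluster remnants, of total size $o(n)$, is then absorbed: each leftover vertex $v$ has $n/(2r)-o(n)$ neighbours of colour $p(v)$ among the covered vertices, so after reserving a sublinear reservoir inside the clusters used by each colour-$p(v)$ cycle, these few vertices can be spliced into the existing cycles without increasing their number.
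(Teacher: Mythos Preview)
Your overall architecture (bound the relevant colours, regularise, find connected monochromatic matchings in the reduced graph, blow up, absorb) matches the paper's, but the two crucial steps are not carried out, and the paper handles them quite differently.

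\textbf{The matching step.} You acknowledge that getting down to exactly $2r^2$ (resp.\ $4r^2$) connected matchings is ``the main obstacle'' and ``the delicate point'', and indeed your primary-colour scheme does not deliver it: per colour $c$ you may have up to $4r$ relevant colour-$c$ components, and a Hall/K\"onig argument inside each gives you one matching per component, so you are at $O(r^3)$ with no mechanism to recharge. The paper avoids this entirely. It first observes (Claim~\ref{cla:regular-r-local}) that the \emph{reduced graph itself inherits $r$-locality} under the majority colouring. Then it uses a clean inductive peeling lemma (Lemma~\ref{lem:local-matchings}): in a $k$-locally coloured graph, cover $V$ by $m$ monochromatic components, take a maximal matching in each, and the leftover graph is $(k-1)$-local. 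For $K_n$ one vertex gives $m=k$ components, for $K_{n,n}$ one edge gives $m=2k-1$; summing $k$ from $1$ to $r$ yields exactly $r(r+1)/2$ and $(2r-1)r$ connected matchings. This is the idea you are missing.

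\textbf{The absorption step.} Splicing each leftover vertex $v$ into an existing cycle via its primary colour $p(v)$ does not work as stated: there is no reason any of the cycles you have built is of colour $p(v)$, let alone one passing through two consecutive $p(v)$-neighbours of $v$; and reserving a ``sublinear reservoir'' does not create absorbers for vertices whose colour was never used. The paper instead sets aside, \emph{before} regularising, a large robust monochromatic structure (an $\eps$-hamiltonian bipartite subgraph via Lemma~\ref{lem:eps-hamiltonian}, resp.\ a triangle cycle), applies the cycle cover to the remainder, and then covers the $o(n)$ leftover vertices by at most $r^2$ (resp.\ $2r^2$) \emph{new} cycles using Lemma~\ref{lem:absorb}; the robust structure then closes up into one final cycle. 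The totals $r(r+1)/2 + r^2 + 1 \le 2r^2$ and $(2r-1)r + 2r^2 + 1 \le 4r^2$ are where the stated constants actually come from.
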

We do not believe our results are best possible, but suspect  that in both cases ($K_n$ and $K_{n,n}$), the number of cycles needed should be linear in $r$.

\begin{conjecture}
There is a $c$ such that for every $r$, every $r$-local colouring of $K_n$ admits a covering with $cr$ disjoint cycles. The same should hold replacing $K_n$ with $K_{n,n}$.
\end{conjecture}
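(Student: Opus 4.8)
The plan is to follow the by-now-standard regularity-and-connected-matchings framework of which the $O(r^2)$ bound in Theorem~\ref{thm:complete} is presumably already an instance, and to try to squeeze an $O(r)$ bound out of a sharper combinatorial core. First apply the multicoloured Szemer\'edi regularity lemma to the $r$-locally coloured $K_n$ (respectively $\kn$), pass to the majority colour on each pair of clusters, and discard sparse and irregular pairs; this yields a reduced graph $R$ on a bounded number $k$ of clusters that is still essentially complete (respectively complete bipartite) and inherits an $r$-local colouring. A monochromatic connected matching of size $t$ in $R$ blows up to a single monochromatic cycle through roughly $2t(n/k)$ vertices of the host graph, and a bounded number of extra cycles absorbs the clusters left uncovered by the matchings together with the vertices discarded during regularisation. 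Hence it suffices to prove the purely combinatorial statement: \emph{there is a constant $c$ such that the vertices of any $r$-locally coloured complete graph, or balanced complete bipartite graph, can be covered by at most $cr$ monochromatic connected matchings.}

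For the combinatorial core I would aim for a structural dichotomy for $r$-local colourings. Fix a small constant $\epsilon$ and call a colour \emph{large} if one of its monochromatic components spans at least $\epsilon k$ vertices. Since each such component contains at least two vertices, every vertex of it sees that colour, and as each vertex sees at most $r$ colours, a double-counting argument bounds the number of large colours by $r/\epsilon$; this is the feature that compensates for the absence of a global bound on the number of colours. One then argues that either some large colour has a component covering all but a $1/(Cr)$-fraction of the vertices, in which case that component supplies the bulk of the cover and one recurses on the small remainder, or no colour is that dominant, in which case the large components of the $O(r)$ large colours, chosen greedily, must tile almost all of the vertex set much as in Pokrovskiy's generalised split colourings, and an explicit cover by $O(r)$ connected matchings can be assembled colour class by colour class. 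In the bipartite setting one works throughout with connected matchings in a bipartite reduced graph, and the split colourings of Theorem~\ref{thm:pokrovskiy} are precisely the configurations forcing $c\geq 2$, which is consistent with the conjectured bound.

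The step I expect to be the real obstacle is exactly this combinatorial core, because it is essentially the monochromatic path partition problem in disguise, and that problem is open even for ordinary $r$-colourings: there the best known bound is $O(r\log r)$ monochromatic cycles, due to Gy\'arf\'as, Ruszink\'o, S\'ark\"ozy and Szemer\'edi~\cite{GRSS06}, and an $r$-colouring is in particular $r$-local, so the conjecture is strictly harder than a long-standing open case. The difficulty lies in controlling how monochromatic components of many distinct colours interleave once no single colour dominates; the greedy ``majority colour'' argument from local Ramsey theory (pick a vertex, pass to its majority-colour neighbourhood, and repeat) produces one large monochromatic structure but loses a constant factor at each step, so naive iteration only recovers the $O(r^2)$ bound, or with more care an $O(r\log r)$ bound. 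A realistic intermediate goal would therefore be to establish the $O(r\log r)$ bound for local colourings, matching what is known in the ordinary case; reaching the linear bound of the conjecture would require a genuinely new idea even for ordinary colourings, and it is only for the small cases $r=2$ (Conlon--Stein, and the $3$-path result of this paper) and $r=3$ (Pokrovskiy, for ordinary colourings) that a linear bound is currently available.
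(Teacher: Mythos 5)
This statement is an open \emph{conjecture} in the paper; the authors offer no proof of it, and neither do you. Your proposal correctly reduces the conjecture, via the standard regularity/connected-matching/absorption pipeline already used in Section~\ref{sec:2}, to the purely combinatorial claim that every $r$-locally coloured complete (or balanced complete bipartite) graph can be covered by $O(r)$ vertex-disjoint monochromatic connected matchings. That reduction is sound and is exactly the machinery behind Theorem~\ref{thm:complete}; the quadratic bounds $2r^2$ and $4r^2$ there come precisely from the fact that the paper can only guarantee $r(r+1)/2$, respectively $(2r-1)r$, connected matchings via Corollaries~\ref{cor:robust-cycle-complete} and~\ref{cor:robust-cycle-bipartite}. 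Your observation that the number of ``large'' colours is at most $r/\epsilon$ is also correct and is the right way to neutralise the unbounded total number of colours.

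The genuine gap is the combinatorial core itself. The dichotomy you propose --- either some monochromatic component covers all but a $1/(Cr)$-fraction of the vertices, or the $O(r)$ large colours tile the vertex set in a generalised-split pattern admitting an explicit cover by $O(r)$ connected matchings --- is asserted rather than proved: neither branch is established, and the recursion in the first branch is uncontrolled, since each level of recursion may itself cost $\Theta(r)$ matchings, so without a bound on the recursion depth you only recover $O(r^2)$ or, with more care, $O(r\log r)$. As you yourself note, this core is at least as hard as the open problem of partitioning an ordinary $r$-coloured $K_n$ into $O(r)$ monochromatic cycles, for which $O(r\log r)$ is the best known bound; an $r$-colouring is in particular $r$-local, so a proof of the conjecture would resolve that problem as well. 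Your assessment that a linear bound requires a genuinely new idea is accurate: what you have written is a plausible research programme consistent with the paper's framework, not a proof of the conjecture.
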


\smallskip

Our second result is a generalisation of Theorem~\ref{thm:pokrovskiy} to local colourings:
\begin{theorem}
\label{thm:bipartite-2-local-paths-partition}
Let the edges of $\kn$ be coloured 2-locally. Then $\kn$ can be partitioned into 3 or less monochromatic paths.
\end{theorem}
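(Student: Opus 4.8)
The plan is to reduce everything to the already‑known two‑colour case. Write $X,Y$ for the two sides of $\kn$. If at most two colours are used we are done at once by Theorem~\ref{thm:pokrovskiy}: a non‑split $2$‑colouring of $\kn$ is covered by two monochromatic paths, and a split one by three. So from now on assume at least three colours occur, and the task is to show this forces, up to a controlled amount of extra structure, the $2$‑coloured picture.

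\emph{Step 1 (structure).} I would start from a vertex $x^{\ast}\in X$ whose incident edges use two colours, say red and blue; such a vertex exists, for otherwise every vertex of $X$ is monochromatic and then, since every $y\in Y$ sees the colour of each $x\in X$, at most two colours appear in total. This splits $Y=Y_R\cup Y_B$ according to the colour of the edge to $x^{\ast}$, and every $y\in Y_R$ (resp.\ $Y_B$) has $R$ (resp.\ $B$) in its size‑$\le 2$ palette. Likewise pick $y^{\ast}\in Y$ using two colours; since $x^{\ast}y^{\ast}$ is monochromatic its palette meets $\{R,B\}$, so after relabelling it is $\{R,C\}$ for some colour $C$ (possibly $C=B$), inducing $X=X_1\cup X_2$ with each side carrying $R$, resp.\ $C$, in its palette. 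The key point to exploit is that an edge coloured outside $\{R,B,C\}$ forces that colour into both of its (size‑$\le2$) endpoint palettes, while those palettes are already pinned on one coordinate by the $Y_R/Y_B$ and $X_1/X_2$ partitions. Chasing these cross‑intersection constraints should show that, on the four cells $X_i\times Y_J$, the colouring is a blow‑up of one of a bounded number of small patterns, up to a structured ``deviation'' part: essentially \emph{either} (A) a $2$‑colouring of $\kn$ after deleting a structured vertex set $Z$, \emph{or} (B) a genuinely $\{R,B,C\}$‑coloured grid — a three‑colour relative of a split colouring — in which each cell $X_i\times Y_J$ is a near‑complete monochromatic bipartite graph.

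\emph{Step 2 (the two cases).} In case (A) I would apply Theorem~\ref{thm:pokrovskiy} to the large $2$‑coloured part $\kn-Z$; each vertex $z\in Z$ still, by $2$‑locality, sends one of its colours to almost all of the opposite side, so it can be inserted into, or appended to, one of the two monochromatic paths via a long monochromatic star, and tracking the imbalance caused by removing $Z$ one never pays more than a third path. In case (B) the cells are near‑complete monochromatic bipartite graphs, so for each of the finitely many patterns one writes down an explicit cover: take a near‑spanning path inside the dominant‑colour cell on each side, using that $K_{p,q}$ has a Hamilton path whenever $|p-q|\le1$, and join the leftover vertices with at most one further path, checking the count stays at three.

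\emph{Main obstacle.} The crux is Step~1 — turning the palette chasing into a genuinely finite and exhaustive classification of $3$‑or‑more‑colour $2$‑local colourings of $\kn$ — together with the bookkeeping in case (A): verifying that the deviation set $Z$ and the imbalance introduced when it is removed can always be absorbed by the single extra path rather than forcing a fourth. Everything after the structure theorem is routine path surgery: attaching monochromatic stars, taking Hamilton paths of nearly balanced complete bipartite graphs, and invoking Theorem~\ref{thm:pokrovskiy}.
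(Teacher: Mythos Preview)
Your proposal is a plan rather than a proof: you yourself flag Step~1 (the structural classification) and the absorption in case~(A) as ``the crux'' and leave both undone. Both gaps are genuine.

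For Step~1, the paper proves a sharp structural lemma (Lemma~\ref{3possiblecolourings}): a simple $2$-local colouring of $K_{n,n}$ uses at most four colours, and in the four-colour case the partition of the vertex set into the pieces $C_i\cap C_j$ is completely determined. Your palette-chasing from a single pair $x^\ast,y^\ast$ points in the right direction but does not by itself deliver this; in particular the dichotomy (A)/(B) you describe is not what actually emerges --- there is no floating ``deviation set $Z$'', only a rigid grid picture in which some cells may be empty.

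For Step~2, case~(A), there are concrete obstacles. Theorem~\ref{thm:pokrovskiy} requires a \emph{balanced} $K_{n,n}$, so removing an arbitrary $Z$ already needs care. Worse, if the $2$-colouring left on $K_{n,n}-Z$ is split you already spend three paths there, and any vertex of $Z$ forces a fourth. And ``insert $z$ via a long monochromatic star'' is not justified: $2$-locality bounds the number of colours at $z$, not the sizes of its colour classes, and inserting $z$ into an existing path needs two \emph{consecutive} path-vertices adjacent to $z$ in the correct colour, which a star does not provide.

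The paper's argument sidesteps all of this. It works by contradiction and never tries to reinsert removed vertices. The engine is two observations: there is no even monochromatic path $P$ with one side of $K_{n,n}\setminus P$ contained in some $\overline{C_i\cap C_j}$, and there is no even monochromatic path $P$ with $K_{n,n}\setminus P$ connected in a single colour. Both follow from Theorem~\ref{thm:pokrovskiy} applied to $K_{n,n}\setminus P$, which \emph{is} a balanced complete bipartite graph precisely because $P$ is even. The remainder of the proof uses Lemma~\ref{3possiblecolourings} and a short case analysis to keep producing such a path $P$ until a contradiction is reached. Removing an even path rather than a vertex set is exactly what makes the reduction to Theorem~\ref{thm:pokrovskiy} clean.
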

So, in terms of monochromatic path partitions, it does not matter whether our graph is 2-locally coloured, or if the total number of colours is 2. For more colours this might be different, but we have not been able to construct $r$-local colourings of $K_{n,n}$ which need more than $2r-1$ monochromatic paths for covering the vertices.

We prove Theorem~\ref{thm:complete} in Section~\ref{sec:2} and  Theorem~\ref{thm:bipartite-2-local-paths-partition} in Section~\ref{sec:2-local-path-partition}. These proofs are totally independent of each other.

\medskip

Theorem~\ref{thm:bipartite-2-local-paths-partition} relies on a structural lemma for $2$-local colourings, Lemma~\ref{3possiblecolourings}. This lemma has a second application in local Ramsey theory.
As mentioned above, some effort has gone into extending Ramsey theory to local 
colourings. 
In particular, in~\cite{GLS+87}, Gy\'arf\'as et al.~determine the $2$-local Ramsey number of the path $P_n$. This number is defined as the smallest number  $m$ such that in any $2$-local colouring of $K_m$ a monochromatic path of length $n$ is present. In~\cite{GLS+87}, it is shown that the $2$-local Ramsey number of the path $P_n$ is $\lceil\frac 32n -1\rceil$. Thus the usual $2$-colour Ramsey number of the path, which is $\lfloor\frac 32n -1\rfloor$ and the $2$-local Ramsey number of the path $P_n$ only differ by at most 1 (depending on the parity of $n$).

The {\it bipartite} $2$-colour Ramsey number of the path $P_n$ is defined as a pair $(m_1,m_2)$, with $m_1\geq m_2$ such that for any pair $m_1',m_2'$ we have that $m'_i\geq m_i$ for both $i=1,2$ if and only if every $2$-colouring of $K_{m_1',m_2'}$ contains a monochromatic path $P_{n}$. Gy\'arf\'as and Lehel~\cite{GL73} and, independently, Faudree and Schelp~\cite{FS75} determined the bipartite $2$-colour Ramsey number of  $P_{2m}$ to be $(2m-1,2m-1)$. The authors of~\cite{FS75} also show that for the odd path $P_{2m+1}$ this number is $(2m+1,2m-1)$. Observe that suitable split colourings can be used to see the sharpness of these Ramsey numbers.

We use our auxiliary structural result, Lemma~\ref{3possiblecolourings}, and the result of~\cite{GL73} to determine the $2$-local bipartite Ramsey number for the even path $P_{2m}$. As for complete host graphs, it turns out this number coincides with its non-local pendant.

\begin{theorem}\label{thm:ramsey}
Let $K_{2\m-1,2\m-1}$ be coloured 2-locally. Then there is a monochromatic path on $2\m$ vertices.
\end{theorem}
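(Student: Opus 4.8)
The plan is to feed the $2$-local colouring of $\kn$ with $n = 2\m-1$ into the structural Lemma~\ref{3possiblecolourings} and then to locate a monochromatic path on $2\m$ vertices inside each of the few colour patterns it allows. I expect Lemma~\ref{3possiblecolourings} to say, roughly, that a $2$-local colouring of a complete bipartite graph either uses at most two colours, or admits a partition of the two sides into $X = X_1 \cup X_2$ and $Y = Y_1 \cup Y_2$ such that the four blocks $X_i$--$Y_j$ are each colour-restricted, with a distinguished colour $g$ covering the two ``off-diagonal'' pairs $X_1$--$Y_2$ and $X_2$--$Y_1$ while the ``diagonal'' pairs $X_1$--$Y_1$ and $X_2$--$Y_2$ use at most $g$ and one extra colour each (a split-type pattern); the extremal instance being the one in which all four blocks are monochromatic.

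In the first case we are done immediately: Gy\'arf\'as and Lehel~\cite{GL73} proved that the bipartite $2$-colour Ramsey number of $P_{2\m}$ equals $(2\m-1,2\m-1)$, so any $2$-colouring of $K_{2\m-1,2\m-1}$ already contains a monochromatic $P_{2\m}$.

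For the split-type case the engine is a pigeonhole on the four part sizes. Since $|X_1|+|X_2| = |Y_1|+|Y_2| = 2\m-1 > 2(\m-1)$, exactly one of $|X_1|,|X_2|$ is at least $\m$ and likewise exactly one of $|Y_1|,|Y_2|$; say $|X_{i}|,|Y_{j}| \geq \m$. If the block $X_{i}$--$Y_{j}$ is monochromatic then it contains a monochromatic $K_{\m,\m}$ and hence a monochromatic $P_{2\m}$ (a path on $2\m$ vertices occupies exactly $\m$ vertices on each side, so $\min(|X_i|,|Y_j|)\geq\m$ is precisely what is needed). Otherwise $X_{i}$--$Y_{j}$ is a diagonal block carrying two colours, say $g$ and red; here I would exploit the connecting colour: a long $g$-path inside one of the off-diagonal complete bipartite blocks $K_{X_i,Y_{3-j}}$, $K_{X_{3-i},Y_j}$ can be pushed into the diagonal block across a single $g$-edge used as a bridge, and if the $g$-subgraph of the diagonal block is too thin to provide either such a bridge or a long $g$-path by itself, then an Erd\H os--Gallai or greedy argument shows its red subgraph already contains a path on $2\m$ vertices. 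Tallying the vertices along such a path yields at least $|X_i|+|Y_j|\geq 2\m$.

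The step I expect to be the main obstacle is exactly this last one: producing a monochromatic $P_{2\m}$ from a $2$-coloured diagonal block whose two sides are only known to have size $\geq\m$, rather than $\geq 2\m-1$ (which is what re-applying \cite{GL73} inside the block would require). One must genuinely use the interaction with the connecting colour $g$ and the off-diagonal blocks, and keep careful track of parity and of the exact length of the longest path in a complete bipartite graph $K_{a,b}$ (which is $2\min(a,b)$, or $2\min(a,b)+1$ when $a\neq b$), so that the path one builds has $2\m$ vertices and not $2\m-1$ --- this is precisely where the sharp hypothesis $n = 2\m-1$ is used up. A lesser, bookkeeping-type difficulty is to check that the exact list of patterns furnished by Lemma~\ref{3possiblecolourings} really does fall, case by case, under the dichotomy ``a large monochromatic block'' versus ``a bridgeable two-coloured diagonal block''.
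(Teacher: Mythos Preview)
Your plan rests on a guessed version of Lemma~\ref{3possiblecolourings} that does not match the actual statement, and this derails the whole argument. The lemma does \emph{not} produce a $2\times 2$ split with a single connecting colour $g$ on the off-diagonal blocks. What it actually gives (after simplifying the colouring and disposing of the $\le 2$-colour case via~\cite{GL73}) is either a four-colour pattern with $\bitop{K_{n,n}}$ split into two parts and $\bibot{K_{n,n}}$ into four, or---and this is the main case---a three-colour pattern in which \emph{both} sides decompose into three parts $\bitop{C_i\cap C_j}$, $\bibot{C_i\cap C_j}$ for $\{i,j\}\subset\{1,2,3\}$. In this $3\times3$ picture each diagonal block $[\bitop{C_i\cap C_j},\bibot{C_i\cap C_j}]$ is $2$-coloured in $i$ and $j$, while the six off-diagonal blocks are monochromatic in three \emph{different} colours, not one. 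Your pigeonhole step ``one of $|X_1|,|X_2|$ is $\ge m$ and one of $|Y_1|,|Y_2|$ is $\ge m$'' has no analogue here: with three parts on each side summing to $2m-1$, none of them need reach $m$, and indeed the paper's Claim~\ref{cla:eine-seite-kleiner-n/2} shows that in a counterexample none of them does.

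Your sketch also leaves the crucial step---extracting a $P_{2m}$ from a $2$-coloured diagonal block too small to apply~\cite{GL73} directly---explicitly unresolved; the ``Erd\H os--Gallai or greedy'' suggestion is not how it goes. The paper's engine is Lemma~\ref{lem:intersection-cover-path}, which applies the \emph{asymmetric} bipartite path Ramsey theorem (Theorem~\ref{thm:faudree-schelp}, with different target lengths $p$ and $q$ in the two colours) inside a diagonal block, choosing $p$ and $q$ so that whichever colour wins, the resulting path can be extended through an adjacent monochromatic off-diagonal block to length $2m$. This already forces every $\bitop{C_i\cap C_j}$ and $\bibot{C_i\cap C_j}$ below $m$, hence all six are nonempty. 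The remaining contradiction then comes from a connectivity analysis: if $C_i$ is connected in colour $i$ one shows $|V(C_i\cap C_k)|<m$ for some $k$ (Claim~\ref{cla:zwei-seiten-kleiner-n/2}), while a simple count (Claim~\ref{cla:hoechstens-eine-kleiner-n/2}) permits this for at most one pair; playing two connected colours against each other and exploiting that the third must then be disconnected yields the final long path. None of this is visible from a $2\times2$ split picture, so the proposal as written would not reach a proof.
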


It is likely that similar arguments can be applied to obtain an analoguous result for odd paths (but such an analogue is not straightforward). Clearly, the result from~\cite{FS75} together with Theorem~\ref{thm:ramsey} (for $m+1$) imply that the $2$-local bipartite Ramsey number for the odd path $P_{2m+1}$ is one of $(2m+1,2m-1)$, $(2m+1,2m)$, $(2m+1,2m+1)$.

\medskip

In view of the results from~\cite{CS16} and our Theorems~\ref{thm:complete},~\ref{thm:bipartite-2-local-paths-partition} and~\ref{thm:ramsey}, it might seem that in terms of path- or cycle-partitions, $r$-local colourings are not very different from $r$-colourings. Let us give an example where they do behave differently, even for $r=2$. 

It is shown in~\cite{SS14} that any 2-coloured complete tripartite graph can be partitioned into at most 2 monochromatic paths, provided that no part of the tripartition contains more than half of the vertices.
This is not true for 2-local colourings:
Let $G$ be a complete tripartite graph with triparts $U$, $V$ and $W$ such that $|U| = 2 |V| = 2 |W| \geq 6$. Pick vertices $u \in U$, $v \in V$ and $w \in W$ and write
$U' = U\setminus \{u\}$, $V' = V\setminus \{v\}$ and $W' = W\setminus \{w\}$.
Now colour the edges of $[W' \cup \{v\}, U']$ red, 
$[V' \cup \{w\}, U']$ green and the remaining edges blue. 
This is a 2-local colouring. However, since no monochromatic path can cover all vertices of $U'$, we need at least 3 monochromatic paths to cover all of $G$.

Note that in our example, the graph $G$ contains a 2-locally coloured balanced complete bipartite graph. This shows that in the situation of  Theorem~\ref{thm:bipartite-2-local-paths-partition}, we might need 3 paths even if the 2-local colouring is not a split colouring (and thus a 2-colouring). Blowing this example up, and adding some smaller sets of vertices seeing new colours, one obtains examples of $r$-local colourings of balanced complete bipartite graphs requiring $2r-1$ monochromatic paths.

\section{Proof of Theorem~\ref{thm:complete}}
\label{sec:2}

In this section we will prove our bounds for monochromatic cycle partitions, 
given by Theorem~\ref{thm:complete}.
The heart of this section is Lemma~\ref{lem:local-matchings}. This lemma enables us to use induction on $r$, in order  to prove new bounds for the number of monochromatic matchings 
needed to cover an $r$-locally coloured graph. In particular, we find these bounds for the complete and the complete bipartite graph. All of this is the topic of Subsection~\ref{match}.

To get from monochromatic cycles to the promised cycle cover,
we use a nowadays standard approach, which was first introduced in~\cite{Luc99}. 
We find a large robust hamiltonian graph, regularise the rest, find monochromatic matchings covering almost all, 
blow them up to cycles, and the absorb the remainder with the robust hamiltonian graph. The interested reader may find a sketch of this well-known method in Subsection~\ref{fromto}.

\subsection{Monochromatic matchings}\label{match}

Given a graph $G$ with  an edge colouring, a monochromatic connected matching is a matching in a connected component of the subgraph that is induced by the edges of a single colour.

\begin{lemma}
\label{lem:local-matchings}
For $k \geq 2$, let the edges of a graph $G$ be coloured $k$-locally. Suppose there are $m$ monochromatic components that together cover $V(G)$, of colours $c_1,\ldots,c_m$. \\
Then there are $m$ 
vertex-disjoint monochromatic connected matchings $M_1,\ldots,$ $M_m$, of colours $c_1,\ldots,c_m$, such that the inherited colouring of $G\setminus V(\bigcup_{i=1}^{m}M_i)$ is a $(k-1)$-local colouring.
\end{lemma}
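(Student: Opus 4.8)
The plan is to build the matchings greedily, one for each of the given components, and to let maximality do the work: any vertex that survives must lose a colour.

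First I would fix an arbitrary ordering $C_1,\dots,C_m$ of the given monochromatic components, where $C_i$ is a component of colour $c_i$, and process them in this order while maintaining the invariant that the matchings chosen so far are pairwise vertex-disjoint. At step $i$, having chosen $M_1,\dots,M_{i-1}$, let $H_i$ be the graph obtained from $C_i$ by deleting the vertex set $V(M_1\cup\dots\cup M_{i-1})$, and let $M_i$ be any maximal matching of $H_i$ (possibly empty, and $H_i$ need not be connected — this will not matter). Since $E(M_i)\subseteq E(C_i)$ and $C_i$ is a connected component of the colour-$c_i$ subgraph, $M_i$ is a monochromatic connected matching of colour $c_i$; and $V(M_i)\cap V(M_j)=\emptyset$ for $j<i$ by construction, so the family $M_1,\dots,M_m$ is pairwise vertex-disjoint, as required.

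It then remains to verify that $G':=G\setminus V(\bigcup_{i=1}^{m}M_i)$ carries a $(k-1)$-local colouring. Let $v\in V(G')$. Since the $C_i$ cover $V(G)$, there is some $C_j$ with $v\in V(C_j)$; as $v$ was never deleted, $v\in V(H_j)$ and $v\notin V(M_j)$. Now take any edge $vw$ of colour $c_j$ in $G$: then $w$ lies in the colour-$c_j$ component of $v$, which is $C_j$, so $vw\in E(C_j)$. If $w$ had also survived into $V(H_j)$, then $vw\in E(H_j)$ with both endpoints unmatched by $M_j$, contradicting maximality; hence $w\in V(M_1\cup\dots\cup M_j)$, so $w\notin V(G')$. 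Thus $v$ is incident to no edge of colour $c_j$ in $G'$. Because $C_j$ is edge-induced it has no isolated vertices, so $v$ does see colour $c_j$ in $G$; since the colouring of $G$ is $k$-local, the palette of $v$ in $G'$ is contained in its palette in $G$ with $c_j$ removed, and so has size at most $k-1$. As $v$ was arbitrary, $G'$ is $(k-1)$-locally coloured.

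I do not expect a genuine obstacle here; the points that need a little care are (i) observing that pairwise vertex-disjointness comes for free from the greedy deletion, so no matching ever has to be "repaired"; (ii) that maximality of $M_j$ in the residual (possibly disconnected) graph $H_j$ is precisely what forces every colour-$c_j$ neighbour of a surviving unmatched vertex to be either removed by an earlier matching or matched by $M_j$; and (iii) that using edge-induced monochromatic components guarantees each vertex of $C_j$ genuinely sees colour $c_j$, so the count at $v$ drops from at most $k$ to at most $k-1$ rather than possibly remaining $k$. The hypothesis $k\ge 2$ is only there to keep the conclusion meaningful (for $k=1$ it degenerates to the statement that $G'$ is edgeless).
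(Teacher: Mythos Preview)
Your argument is correct and is essentially the paper's own proof: greedily extract a maximal (the paper says maximum, but only maximality is used) colour-$c_i$ matching from $C_i$ minus the earlier matchings, and observe that any surviving vertex $v$ belongs to some $C_j$ and, by maximality of $M_j$, has lost all its colour-$c_j$ neighbours. Your write-up is slightly more careful in spelling out points (ii) and (iii), but the underlying idea is identical.
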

\begin{proof}
Let $G$ be covered by $m$ monochromatic components $C_1, \ldots, C_m$ of colours  $c_1, \ldots, c_m$. Let $M_1 \subset C_1$ be a  maximum matching
in colour $c_1$. For $ 2 \leq i \leq m$ we iteratively pick maximum matchings $M_i \subset C_i \setminus V(\bigcup_{j<i}M_j) $ in colour $c_i$. 
Set $M:= \bigcup_{j\leq m}M_j$.

Now let $v$ be any vertex in $H:= G \setminus V(M)$. Say $v \in V(C_i \setminus V(M)).$ In particular,  vertex $v$ sees colour $c_i$ in $G$. However, by maximality of $M_i$, vertex $v$ does not see the colour $c_i$ in $H$. Thus in $H$, vertex $v$ sees at most $k-1$ colours. Hence,  the inherited colouring of $H$ is a $(k-1)$-local colouring, which is as desired. 
\end{proof}

\begin{corollary}\label{cor:robust-cycle-complete}
If $K_n$ is $r$-locally edge coloured, and $H$ is obtained from $K_n$ by deleting  $o(n^2)$ edges, then 
\begin{enumerate}[(a)]
\item $V(K_n)$ can be covered with at most $r(r+1)/2$ monochromatic connected matchings, and 
\item all but $o(n)$ vertices of $H$ can be covered with at most $r(r+1)/2$ monochromatic connected matchings.
\end{enumerate}
Note that the matchings from (b) are connected in $H$.
\end{corollary}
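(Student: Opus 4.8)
The plan is to prove both parts by iterating Lemma~\ref{lem:local-matchings}, stripping away one unit of locality per round. For (a), observe that in the complete graph $K_n$ the at most $r$ monochromatic components that contain a fixed vertex $v$ — one for each colour seen at $v$ — already cover $V(K_n)$, since any other vertex $u$ lies in the component of $v$ in the colour of the edge $uv$. Feeding these components into Lemma~\ref{lem:local-matchings} yields at most $r$ vertex-disjoint monochromatic connected matchings whose removal leaves a complete graph with an $(r-1)$-local colouring; now recurse with $r$ lowered by one. Running the rounds $k = r, r-1, \ldots, 2$ spends at most $\sum_{k=2}^r k = r(r+1)/2 - 1$ matchings and leaves a complete graph with a $1$-local colouring, which — being connected — is monochromatic. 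One further connected matching then covers all but at most one vertex of this monochromatic clique; accounting for that last vertex (routine — see below) keeps the total at $r(r+1)/2$ monochromatic connected matchings covering $V(K_n)$, each of which lies inside one monochromatic component of $K_n$ and so is connected there.

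For (b) the graph $H$ is not complete, but having lost only $o(n^2)$ edges it has average non-degree $o(n)$, so some vertex $v$ satisfies $|V(H) \setminus (N_H(v) \cup \{v\})| = o(n)$. Writing $S := N_H(v) \cup \{v\}$, the key move is to apply Lemma~\ref{lem:local-matchings} not to $H$ but to the induced subgraph $H[S]$: the at most $r$ monochromatic components of $H[S]$ containing $v$ cover $S$, the colouring of $H[S]$ is still $r$-local, and the lemma returns at most $r$ matchings — each connected inside a monochromatic component of $H[S]$, and therefore of $H$ — together with an $(r-1)$-locally coloured induced subgraph of $H$ that is again a clique minus $o(n^2)$ edges. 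Iterating precisely as in (a), and discarding in each of the $r$ rounds the $o(n)$ vertices that go unreached (their total is $o(n)$ since $r$ is a constant), one reaches a dense monochromatic induced subgraph; a standard density argument deletes $o(n)$ more vertices so that the remainder has minimum degree above half its order and hence a near-perfect matching — the last of the at most $r(r+1)/2$ monochromatic matchings — covering all but $o(n)$ vertices of $H$.

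The point that genuinely needs care, rather than the arithmetic, is the interaction between vertex deletion and monochromatic components: restricting a monochromatic component of the host to a subset can split it into many components, which would inflate the ``at most $k$ components per round'' bookkeeping and break the induction. This is exactly why in (b) I first pass to $H[S]$ and only then invoke the lemma — a monochromatic component of $H$ that happens to lie inside $S$ remains connected and remains maximal in $H[S]$, so no splitting occurs, and each round genuinely costs at most (the current locality) many matchings. Two minor bookkeeping issues remain: checking that the graph stays of linear order throughout the $O(1)$ rounds (if it ever drops below, all but $o(n)$ vertices are already covered and we stop), and handling the parity of the final monochromatic clique in~(a) — where a single leftover vertex is trivially covered; I would treat both as routine.
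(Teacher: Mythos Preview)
Your proposal is correct and follows essentially the same line as the paper: in each round pick a vertex whose (at most $k$) closed monochromatic neighbourhoods cover the current graph, feed these into Lemma~\ref{lem:local-matchings}, and iterate; for~(b) both you and the paper choose at each step a vertex with $o(n)$ non-neighbours (aborting if none exists, since then only $o(n)$ vertices remain). Your more explicit treatment of the connectivity of the matchings under restriction, of the linear-order check, and of the final parity issue simply fleshes out details the paper leaves implicit.
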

\begin{proof}
The proof is based on the following easy observation. In any colouring of $K_n$, 
the closed monochromatic neighbourhoods of any vertex $v$ together cover $K_n$. Since the colouring is a $k$-local colouring, we can cover 
all of $V(K_n)$ with $k$ components. Now apply Lemma~\ref{lem:local-matchings} successively to obtain the bound from (a).

For (b), it suffices to observe that  we can choose at each step a vertex $v$ that has $o(n)$ non-neighbours in the current graph. 
For, if at some step, there is no such vertex, then a simple calculation shows we have already covered all but $o(n)$ of $V(K_n)$, and can hence abort the procedure.
\end{proof}

\begin{corollary}\label{cor:robust-cycle-bipartite}
If $K_{n,n}$ is $r$-locally edge coloured, and $H$ is obtained from $K_{n,n}$ by deleting  $o(n^2)$ edges, then 
\begin{enumerate}[(a)]
\item $V(K_{n,n})$ can be covered with at most $(2r-1)r$ monochromatic connected matchings, and 
\item all but $o(n)$ vertices of $H$ can be covered with at most $(2r-1)r$ monochromatic connected matchings.
\end{enumerate}
Note that the matchings from (b) are connected in $H$.
\end{corollary}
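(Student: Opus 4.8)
The plan is to follow the proof of Corollary~\ref{cor:robust-cycle-complete} almost verbatim, replacing its basic observation (the closed monochromatic neighbourhoods of a single vertex cover $K_n$) by the following bipartite analogue: in any $k$-local colouring of $K_{n,n}$ with parts $X$ and $Y$, the vertex set can be covered by at most $2k-1$ monochromatic components. To prove this, pick any edge $vw$ with $v\in X$ and $w\in Y$, and let $c$ be its colour. Ranging over the at most $k$ colours seen by $v$, the closed monochromatic neighbourhoods of $v$ cover $\{v\}\cup Y$, and similarly the closed monochromatic neighbourhoods of $w$ cover $\{w\}\cup X$. Since the monochromatic component in colour $c$ through the edge $vw$ contains both $N_c[v]$ and $N_c[w]$, the colour-$c$ component reached from $v$ and the one reached from $w$ coincide; hence at most $1+(k-1)+(k-1)=2k-1$ components suffice.

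Given this, I would iterate Lemma~\ref{lem:local-matchings}, exactly as in Corollary~\ref{cor:robust-cycle-complete}(a). Starting from the $r$-locally coloured $K_{n,n}$, cover it with at most $2r-1$ monochromatic components as above, apply Lemma~\ref{lem:local-matchings} to extract at most $2r-1$ monochromatic connected matchings whose removal leaves an $(r-1)$-local colouring, and repeat with $r$ replaced by $r-1$. Two points keep the iteration alive. First, deleting the vertex set of a matching from a balanced complete bipartite graph leaves a balanced complete bipartite graph, because each matching edge meets $X$ and $Y$ in exactly one vertex; so at the stage where the colouring is $k$-local the current graph is again a balanced complete bipartite graph, and the observation above applies with parameter $k$. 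Second, at the last stage, $k=1$, the current connected balanced complete bipartite graph is monochromatic, so one perfect matching covers it. Altogether we use at most $\sum_{k=1}^{r}(2k-1)=r^2\le(2r-1)r$ monochromatic connected matchings, which gives part~(a).

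For part~(b) the same scheme runs inside $H$, following the template of the proof of Corollary~\ref{cor:robust-cycle-complete}(b). At each stage one picks the edge $vw$ so that both $v$ and $w$ have only $o(n)$ non-neighbours in the current subgraph of $H$; this is possible unless that subgraph already has only $o(n)$ vertices (a short counting argument, using that only $o(n^2)$ edges of $K_{n,n}$ were deleted and that the current subgraph stays balanced), in which case we stop. The chosen components then cover all but $o(n)$ vertices of the current graph; the matchings produced by Lemma~\ref{lem:local-matchings} are connected in the current subgraph, hence in $H$; and after at most $r$ stages all but $o(n)$ vertices of $V(H)$ have been covered, using at most $(2r-1)r$ connected matchings. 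The only place where I expect to do any real work is this bookkeeping in part~(b): one runs the argument of Lemma~\ref{lem:local-matchings} with its hypothesis weakened from covering $V(G)$ to covering all but $o(n)$ vertices, and checks that the small set of vertices that consequently fail to drop in the number of colours they see remains of size $o(n)$ as it accumulates over the constantly many ($r$) stages.
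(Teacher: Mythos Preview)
Your proposal is correct and follows the paper's own approach essentially verbatim: the paper's entire proof is the remark that ``in any colouring of $K_{n,n}$ the two closed monochromatic neighbourhoods of any edge form a vertex cover of size at most $2r-1$,'' and then one proceeds exactly as in Corollary~\ref{cor:robust-cycle-complete}. Your write-up is in fact more careful than the paper's (you track that the leftover stays balanced complete bipartite, and you handle the $o(n)$ bookkeeping in (b) explicitly), and your count $\sum_{k=1}^{r}(2k-1)=r^{2}$ is sharper than the stated bound $(2r-1)r$, though of course it still implies it.
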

\begin{proof}
The proof very similar to the proof Corollary~\ref{cor:robust-cycle-complete}. 
We only note that in any colouring of $K_{n,n}$ the two closed monochromatic
neighbourhoods of any edge form a vertex cover of size at most $2r-1$. 
\end{proof}

\subsection{From matchings to cycles}\label{fromto}
\subsubsection{Regularity}
Regularity is the key for expanding our partition  of an $r$-locally coloured $K_n$ or $K_{n,n}$ into monochromatic connected matchings to a partition of almost all vertices into monochromatic cycles.
We follow 
an approach introduced by \L uczak~\cite{Luc99}, which has become a standard method for cycle embeddings in large graphs. We will focus on the parts where our proof differs
from  other applications of this method (see~\cite{GRSS06,GRSS11,LSS15}).

The main result of this section is:

\begin{lemma}\label{lem:asymptotic-cycle-partition}
If $K_n$ and $K_{n,n}$ are $r$-locally edge coloured, then
\begin{enumerate}[(a)]
\item all but $o(n)$ vertices of $K_n$ can be covered with at most $r(r+1)/2$ monochromatic cycles.
\item all but $o(n)$ vertices of $\kn$ can be covered with at most $(2r-1)r$ monochromatic cycles.
\end{enumerate}
\end{lemma}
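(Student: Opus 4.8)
The plan is to follow \L uczak's connected-matching method, reducing Lemma~\ref{lem:asymptotic-cycle-partition} to the matching statements of Corollaries~\ref{cor:robust-cycle-complete}(b) and~\ref{cor:robust-cycle-bipartite}(b). First I would apply a version of Szemer\'edi's regularity lemma adapted to edge-colourings: fixing constants $0<\epsilon\ll d\ll 1$ with $d=d(r)$ and $\epsilon<\tfrac1{r+1}$, and taking $n$ large, there is an equitable partition $V_0,V_1,\dots,V_t$ of $V(K_n)$ (respectively of $V(\kn)$, additionally refining its bipartition), with $|V_0|\le\epsilon n$ and $t_0(d)\le t\le T(\epsilon,d)$, such that all but $\epsilon t^2$ of the pairs $(V_i,V_j)$ are $\epsilon$-regular in every colour whose density between $V_i$ and $V_j$ is at least $d$. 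The point that makes this work irrespective of the (possibly unbounded) number of colours present is that the relevant energy $\sum_{i,j}\tfrac{|V_i||V_j|}{n^2}\sum_c d_c(V_i,V_j)^2$ is always at most $1$, because for each pair the colour densities $d_c(V_i,V_j)$ sum to $1$; hence the usual energy-increment argument terminates after a number of refinements bounded in terms of $\epsilon$ and $d$ only.

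Next I would turn this into an $r$-locally coloured, almost complete reduced graph $R$ on $\{1,\dots,t\}$: for a regular pair $(V_i,V_j)$ carrying a colour of density $\ge d$, put into $R$ an edge $ij$ of one such colour; all other pairs become non-edges. Two things must be checked. (i) Almost every regular pair does carry a colour of density $\ge d$. This I would derive from the structure of $r$-local colourings: a pair that is ``spread'' over too many colours would force some vertex of $V_i$ to be adjacent to edges of more than $r$ colours; controlling this uniformly over the partition is where the local hypothesis really enters, and it is the step I expect to be the main obstacle. (ii) The colouring of $R$ is $r$-local. If vertex $i$ of $R$ were incident to edges of colours $d_1,\dots,d_{r+1}$, say towards $V_{j_1},\dots,V_{j_{r+1}}$, then by $\epsilon$-regularity and the density lower bound, for each $s$ all but at most $\epsilon|V_i|$ vertices of $V_i$ have a $d_s$-neighbour in $V_{j_s}$; since $(r+1)\epsilon<1$, some vertex of $V_i$ avoids all $r+1$ exceptional sets and hence sees $r+1$ colours in $G$, a contradiction. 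In the bipartite case $R$ inherits the bipartition of $\kn$.

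Finally, $R$ is obtained from $K_t$ (respectively $K_{t,t}$) by deleting at most $\epsilon t^2=o(t^2)$ edges, so Corollary~\ref{cor:robust-cycle-complete}(b) (respectively Corollary~\ref{cor:robust-cycle-bipartite}(b)) gives at most $r(r+1)/2$ (respectively $(2r-1)r$) monochromatic matchings $M_1,\dots,M_m$, each connected in $R$, covering all but $o(t)$ vertices of $R$. By the standard blow-up argument, a monochromatic connected matching of colour $c$ in $R$ is converted into a single monochromatic cycle of colour $c$ covering all but an $o(1)$-fraction of $\bigcup_{ij\in M}(V_i\cup V_j)$: one first passes to super-regular subpairs, then uses connectivity in colour $c$ to route a long path through all of them and closes it up, discarding only $o(n/t)$ vertices per pair plus a negligible set for the routing. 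These $m$ cycles, together with $V_0$ and the $o(n)$ vertices discarded at each stage, leave uncovered only $o(n)$ vertices of $K_n$ (respectively $\kn$), which yields the claimed bounds. Apart from the colour-sensitive regularity statement and step~(i), every ingredient is routine and identical to~\cite{GRSS06,GRSS11,LSS15}.
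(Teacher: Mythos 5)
Your overall route is the same \L uczak-style reduction as the paper's: regularity, a reduced graph whose majority/representative colouring is shown to be $r$-local exactly as in your step (ii) (this is the paper's Fact~\ref{lem:regular-fact} and Claim~\ref{cla:regular-r-local}), then Corollaries~\ref{cor:robust-cycle-complete}(b) and~\ref{cor:robust-cycle-bipartite}(b) for connected matchings, and the standard blow-up (Lemma~\ref{lem_connmatchTOcycles}). The one place you genuinely diverge is how the unbounded palette of an $r$-local colouring is tamed: you propose a regularity lemma that is simultaneously $\eps$-regular in every colour of density at least $d$ on a pair (your energy argument, with total energy bounded by $1$ since the colour densities on each pair sum to $1$, does make such a lemma provable), whereas the paper first uses Corollary~\ref{cor:local-edges} to isolate $t=t(\eps,r)$ colours covering all but $\eps n^2/8r^2$ edges, lumps the rest into a garbage class $G_0$, applies the standard bounded-colour regularity lemma (Lemma~\ref{lem:regularity-lemma}), and later deletes the $O(\eps l^2)$ reduced edges of colour $0$. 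Both routes work; yours trades the paper's preprocessing step for a nonstandard regularity statement you would have to prove.

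The genuine gap is your step (i), which you yourself flag. Your proposed justification --- that a pair ``spread'' over too many colours would force a vertex to see more than $r$ colours --- is false: the grid-type colouring described at the start of Section~\ref{sec:2-local-path-partition} gives a $3$-local colouring of a complete bipartite graph with arbitrarily many colours, so locality does not bound the number of colours appearing between two clusters. What saves the step is not a bound on the number of colours but a density statement: Lemma~\ref{lem:local-edges} (Gy\'arf\'as et al.), applied to the complete bipartite graph $[V_i,V_j]$, which inherits an $r$-local colouring, shows that \emph{every} cross pair carries a colour of density at least $1/2r^2$; this is precisely how the paper obtains~\eqref{zweite}. With $d\le 1/2r^2$ your reduced graph misses only the $\eps t^2$ irregular pairs (plus, in the bipartite case, the slight imbalance between the two sides, which costs only $O(\eps t)$ further clusters), and the rest of your argument goes through. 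So the proof is completable, but as written the key colour-density input is missing and the heuristic offered in its place is wrong.
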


Before we start, we need a couple of regularity preliminaries. 
For a graph $G$ and disjoint subsets of vertices $A,B \subset V(G)$ we denote by $[A,B]$ the bipartite subgraph with biparts $A$ and $B$ and edge set $\{ab \in E(G): a\in A, b \in B\}$.
We write $\deg_G(A,B)$ for the number of edges in $[A,B]$. If $A=\{a\}$ we write shorthand $\deg_G(a,B)$.

The subgraph $[A,B]$ is \emph{$(\eps, G)$-regular} if
$$ |\deg_G(X,Y) - \deg_G(A,B)| < \eps$$ for all  $X \subset A,~Y\subset B$ with $|X|> \eps |A|,~|Y| > \eps|B|$.
Moreover, ${[A,B]}$ is \emph{$(\eps,\delta,G)$-super-regular} if it is $(\eps,G)$-regular and 
$$ \deg_{G}(a,B) > \delta |B|\text{ for each } a \in A \text{ and }\deg_{G}(b,A) > \delta |A| \text{ for each } b \in B .$$
A vertex-partition $ \{ V_0, V_1 , \ldots , V_l \}$ of the vertex set of a graph $G$  into $l+1$ \emph{clusters} is called \emph{$(\eps,G)$-regular}, if 
\begin{enumerate}[\rm (i)]
\item $|V_1| = |V_2| = \ldots = |V_l|;$
\item $|V_0| < \eps n;$
\item apart from at most  $\eps \binom{l}{2}$ exceptional pairs, the graphs $[V_i, V_j]$ are $(\eps, G)$-regular.
\end{enumerate}

The following version of Szemer\'edi's regularity lemma is well-known. The given prepartition will only be used for the bipartition of the graph $K_{n,n}$ in Lemma~\ref{lem:asymptotic-cycle-partition}~(b). The colours on the edges are represented by the graphs $G_i$.

\begin{lemma}[Regularity lemma with prepartition and colours]
\label{lem:regularity-lemma}
For every $\eps > 0$ and  $m,t \in \mathbb{N}$ there are $M,n_0 \in \mathbb{N}$ such that for all $n \geq n_0$ 
the following holds.\\ For all
graphs $G_0,G_1, G_2, \ldots, G_t$ with $V(G_0) = V(G_1) = \ldots = V(G_t) =V$ and a partition $A_1 \cup \ldots \cup A_s = V$, where
$r \geq 2$ and $|V| = n$, there is a partition $V_0\cup V_1 \cup \ldots \cup V_l$
of $V$ into $l+1$  clusters such that
\begin{enumerate}[\rm (a)]
\item \label{itm:regularity-a} $m \leq l \leq M;$
\item \label{itm:regularity-b} for each $1 \leq i \leq l$ there is a $1 \leq j \leq s$ such that $V_i \subset A_j;$
\item  \label{itm:regularity-c} $V_0\cup V_1 \cup \ldots \cup V_l$ is $(\eps,G_i)$-regular for  each $0\leq i \leq t.$
\end{enumerate}
\end{lemma}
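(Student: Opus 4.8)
The plan is to deduce this coloured, prepartitioned version by mirroring the classical proof of Szemer\'edi's regularity lemma, while combining three standard ingredients: a multicolour potential (index) argument, a refinement that respects a fixed prepartition, and an equitability clean-up. The guiding invariant throughout will be that every partition I consider is an equipartition that \emph{refines} the prepartition $\{A_1,\ldots,A_s\}$. Since refinement only subdivides existing clusters, property~(\ref{itm:regularity-b}) is then automatic at every stage, and in particular for the final partition; there is no need to enforce it separately at the end.

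First I would fix the potential. For a partition $\mathcal{P}=\{W_1,\ldots,W_p\}$ of $V$ set
\[
 q(\mathcal{P}) \;=\; \sum_{i=0}^{t}\ \sum_{1\le a<b\le p}\frac{|W_a|\,|W_b|}{n^2}\, d_{G_i}(W_a,W_b)^2,
\]
where $d_{G_i}$ denotes edge density in $G_i$. Each inner sum lies in $[0,1]$, so $0\le q(\mathcal{P})\le t+1$; this uniform bound is what drives the iteration. The initial partition $\mathcal{P}_0$ is obtained by chopping each $A_j$ into blocks of a common size $\lfloor n/L\rfloor$ for a large integer $L$ and collecting the fewer than $s\cdot n/L$ leftover vertices into the exceptional set $V_0$. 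Choosing $L>s/\eps$ makes $|V_0|<\eps n$, and choosing $L\ge m+s$ forces at least $m$ non-exceptional clusters; thus $\mathcal{P}_0$ is an equipartition refining $\{A_1,\ldots,A_s\}$ that already secures the lower bound in~(\ref{itm:regularity-a}).

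The core is the iteration. If the current equipartition $\mathcal{P}$ fails to be $(\eps,G_i)$-regular for some $0\le i\le t$, then more than $\eps\binom{l}{2}$ of its pairs are irregular in $G_i$; for each such pair I choose witnessing subsets and refine every cluster simultaneously by the common refinement induced by all witnesses meeting it. Crucially these witnesses live inside clusters, hence inside the $A_j$, so the refined partition still refines the prepartition. The defect form of the Cauchy--Schwarz inequality then yields $q(\mathcal{P}')\ge q(\mathcal{P})+\eps^{5}$ for the refined $\mathcal{P}'$, after which I re-equalise the cluster sizes by splitting each into equal blocks and dumping the remainder into $V_0$, absorbing the resulting index loss (negligible for large $n$). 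Since $q$ is bounded by $t+1$ and rises by a fixed amount at each non-regular step, the process halts after at most $(t+1)\eps^{-5}$ rounds, producing a partition that is $(\eps,G_i)$-regular for \emph{every} $i$ simultaneously. Each round multiplies the number of clusters by a bounded factor, so the final $l$ is bounded by a tower-type constant $M=M(\eps,t,m)$, giving the upper bound in~(\ref{itm:regularity-a}); the accumulated exceptional set still has size below $\eps n$.

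I expect the main point to be compatibility rather than any single hard step: I must keep the three constraints --- simultaneous regularity for all $t+1$ graphs, equitability with a small $V_0$, and the prepartition constraint~(\ref{itm:regularity-b}) --- alive together through every refinement. The coloured index handles the first, the re-equalisation handles the second, and the ``refine only'' discipline handles the third. Because re-equalising and adding witness cuts each only subdivide clusters already contained in some $A_j$, these operations interact harmlessly, so no genuine obstruction arises and the argument is the classical one with bookkeeping for the extra structure.
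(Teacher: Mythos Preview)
The paper does not give a proof of this lemma at all: it is stated as a well-known variant of Szemer\'edi's regularity lemma and used as a black box. So there is no ``paper's own proof'' to compare against. Your sketch is essentially the standard energy-increment argument, correctly adapted with the two extra features (a sum of $t+1$ indices for the colours, and the ``refine only'' discipline to preserve the prepartition), and it is fine as an outline.

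One bookkeeping point worth flagging: as you have written it, your constants depend on $s$ (you take $L>s/\eps$), whereas the lemma as stated in the paper quantifies $M,n_0$ in terms of $\eps,m,t$ only. In the paper's application one always has $s\le 2$, so this is harmless there, but if you want the statement exactly as written you should either treat $s$ as an additional input parameter for $M,n_0$ (this is the usual formulation) or note that the paper's phrasing implicitly bounds $s$. This is a wrinkle in the statement rather than in your argument.
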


Observe that the regularity lemma provides regularity only for a number of colours bounded by the input parameter $t$. However,
the total number of colours of an $r$-local colouring is not bounded by any function of $r$ (for an example, see Section~\ref{sec:paths}). Luckily, it turns out that it suffices to focus on the
$t$ colours of largest density, where $t$ depends only on $r$ and $\eps$. This is  guaranteed by the following result from~\cite{GLN+87}.

\begin{lemma}
\label{lem:local-edges}
Let a graph $G$ with average degree $a$ be $r$-locally coloured. 
Then one colour has at least  $ a^2/2r^2$ edges.
\end{lemma}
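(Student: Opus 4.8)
The plan is to exploit the single genuinely bounded quantity in an $r$-local colouring — the number of (vertex, colour) incidences — rather than to argue directly over the colours, whose number is unbounded. Write $n=|V(G)|$, so that the total number of edges is $an/2$. For a colour $c$ let $e_c$ be the number of edges of colour $c$, and let $W_c\subseteq V(G)$ be the set of vertices incident with at least one edge of colour $c$.

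First I would record the two facts that drive everything. On the one hand, since the $e_c$ edges of colour $c$ all have both endpoints in $W_c$, we have $e_c\le\binom{|W_c|}{2}<|W_c|^2/2$, hence $|W_c|>\sqrt{2e_c}$; this converts an edge count into a vertex count. On the other hand, $r$-locality yields a clean global bound on incidences: summing the number of colours seen at each vertex,
\[
\sum_c |W_c| \;=\; \sum_{v\in V(G)} \bigl|\{c : v \in W_c\}\bigr| \;\le\; \sum_{v\in V(G)} r \;=\; rn,
\]
because each vertex is incident with edges of at most $r$ distinct colours.

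Combining these, $\sum_c\sqrt{2e_c}<\sum_c|W_c|\le rn$, so $\sum_c\sqrt{e_c}<rn/\sqrt2$. Now let $E=\max_c e_c$ be the size of the largest colour class. Writing each edge count as $e_c=\sqrt{e_c}\cdot\sqrt{e_c}$ and bounding one factor by $\sqrt E$,
\[
\frac{an}{2}=\sum_c e_c \;\le\; \sqrt E\,\sum_c \sqrt{e_c} \;<\; \sqrt E\cdot\frac{rn}{\sqrt2}.
\]
Rearranging gives $\sqrt E> a/(\sqrt2\,r)$, that is $E>a^2/(2r^2)$, which is the claimed bound (indeed with a strict inequality).

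I expect the only real subtlety to be the conceptual step of counting vertex–colour incidences instead of colours: the paper stresses that an $r$-local colouring may use arbitrarily many colours, so any argument that loses a factor equal to the number of colours is doomed, and the naive pigeonhole ``one colour carries a $1/r$ share of the edges'' is false. The estimate $|W_c|>\sqrt{2e_c}$ is exactly what lets the bounded quantity $\sum_c|W_c|\le rn$ control the otherwise unbounded sum $\sum_c e_c$, and the final Cauchy--Schwarz-type step $\sum_c e_c\le(\max_c\sqrt{e_c})\sum_c\sqrt{e_c}$ then converts this into a lower bound on the largest colour class. Everything else is a routine manipulation of these two inequalities.
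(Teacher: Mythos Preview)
The paper does not actually prove this lemma: it quotes it as ``the following result from~\cite{GLN+87}'' and uses it as a black box, so there is no in-paper proof to compare against.

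Your argument is correct. The two ingredients --- the incidence bound $\sum_c |W_c|\le rn$ coming from $r$-locality, and the trivial edge bound $e_c<|W_c|^2/2$ --- are exactly the right ones, and chaining them via $\sum_c e_c\le \sqrt{E}\sum_c\sqrt{e_c}$ yields the claim cleanly (indeed with strict inequality whenever $G$ has any edges). A minor cosmetic point: you should restrict the sums to colours that actually occur, since the strict inequality $e_c<|W_c|^2/2$ fails when $e_c=0$; this changes nothing in the computation. This is in fact the standard proof of the lemma as it appears in the Ramsey literature, so while you cannot be compared to the present paper, your approach matches the source it cites.
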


\begin{corollary}
\label{cor:local-edges}
For all $\eps>0$ and $r\in\mathbb N$ there is a $t=t(\eps, r)$ such that 
for any $r$-local colouring of $K_n$ or $K_{n,n}$, there are $t$ colours  such that all but at most $\eps n^2$ edges use these colours.
\end{corollary}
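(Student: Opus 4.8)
The plan is to choose the $t$ colours greedily: repeatedly delete all edges of the currently densest colour, and use Lemma~\ref{lem:local-edges} to show that each such deletion removes a definite fraction of the edges still present, as long as many edges remain. Concretely, set $t := \lceil 2r^2/\eps^2 \rceil$ and let $G^{(0)}$ be $K_n$ (resp.\ $\kn$). Having defined $G^{(i)}$, stop if $G^{(i)}$ has at most $\eps n^2$ edges; otherwise let $d_{i+1}$ be a colour with the most edges in $G^{(i)}$ and let $G^{(i+1)} := G^{(i)}$ minus all edges of colour $d_{i+1}$.

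The first point to record is that the colouring inherited by any subgraph of $G^{(0)}$ is again $r$-local, since deleting edges cannot increase the number of colours seen at a vertex; hence Lemma~\ref{lem:local-edges} applies to every $G^{(i)}$. Next I bound the number of edges deleted in step $i+1$ under the assumption $|E(G^{(i)})| > \eps n^2$. Both $K_n$ and $\kn$ have at most $2n$ vertices, so a subgraph with more than $\eps n^2$ edges has average degree more than $2\eps n^2/(2n) = \eps n$. By Lemma~\ref{lem:local-edges}, colour $d_{i+1}$ then has at least $(\eps n)^2/(2r^2) = \eps^2 n^2/(2r^2)$ edges in $G^{(i)}$, so $|E(G^{(i+1)})| \le |E(G^{(i)})| - \eps^2 n^2/(2r^2)$. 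Since $|E(G^{(0)})| \le n^2$, the process must stop after at most $n^2 \big/ (\eps^2 n^2/(2r^2)) = 2r^2/\eps^2 \le t$ steps, say after step $i$. At that moment at most $\eps n^2$ edges remain, and each such edge avoids all of $d_1,\dots,d_i$; padding the list with arbitrary further colours if $i < t$ yields the desired $t$ colours, and $t$ depends only on $\eps$ and $r$.

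There is no genuine obstacle here; the only thing to keep an eye on is that after removing edges the host graph is no longer complete, so when invoking Lemma~\ref{lem:local-edges} one must argue through the current average degree — which stays linear in $n$ precisely because $\Omega(\eps n^2)$ edges survive — rather than through completeness, and that $r$-locality is preserved when passing to subgraphs.
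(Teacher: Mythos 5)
Your proposal is correct and follows essentially the same route as the paper: greedily strip off the densest colour and invoke Lemma~\ref{lem:local-edges} to show each step removes many edges, with $r$-locality preserved under edge deletion. The only difference is bookkeeping — the paper tracks the removed edges as a fraction $\eps/2r^2$ of those remaining, giving geometric decay and $t=\lceil -\tfrac{2r^2}{\eps}\log\eps\rceil$, while your additive count gives the slightly larger $t=\lceil 2r^2/\eps^2\rceil$, which is immaterial since only some $t(\eps,r)$ is required.
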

\begin{proof}
We only prove the corollary for $K_{n,n}$, as the proof for $K_n$ is very similar. 
Let $t := \lceil - \frac{2r^2}{\eps}\log \eps\rceil$.
We iteratively take out the edges of the colours with the largest number of edges. We stop either after $t$ steps, or before, if we the remaining graph has density  less than $\eps$. 
At each step  Lemma~\ref{lem:local-edges} ensures that at least a fraction of $\frac{\eps}{2r^2}$ of the remaining edges has the same 
colour.\footnote{Here we use that in a balanced bipartite graph $H$ with $2n$ vertices, $m$ edges, average degree $a$ and density $d$ we have $a^2 = \frac{4m^2}{4n^2} = dm$.}
Hence we can bound the number of edges of the remaining graph by $$\left(1-\frac{\eps}{2r^2}\right)^tn^2\leq e^{-\eps t/2r^2}n^2\leq\eps n^2.$$ 
\end{proof}

\subsubsection{Proof of Lemma~\ref{lem:asymptotic-cycle-partition}}
We only prove part (b) of Lemma~\ref{lem:asymptotic-cycle-partition}, since the proof of part (a) is very similar and actually simpler.
For the sake of readability, we assume that $ n_0 \gg 0$ is sufficiently large and $0 <  \eps \ll 1$ is sufficiently small without calculating exact values.

Let the edges of $K_{n,n}$ with biparts $A_1$ and $A_2$ be coloured $r$-locally and encode the colouring by edge-disjoint graphs $G_1, \ldots, G_s$ on the vertex set of $K_{n,n}$.  By Corollary~\ref{cor:local-edges}, there is a $t= t(\eps,r)$
such that
the union of $G_1, \ldots, G_t$ covers all but at most $\eps n^2 /8r^2$ edges of $K_{n,n}$. We merge the remaining 
edges into $G_{0} := \bigcup_{i=t+1}^s G_i$.
Note that the colouring remains $r$-local and by the choice of $t$, we have
\begin{equation}\label{equ:colour-t+1}
|E(G_{0})| \leq \eps n^2 /8r^2.
\end{equation}

\newcommand{\xknn}{a}
\newcommand{\yknn}{b}

\newcommand{\xr}{v}
\newcommand{\yr}{w}

For  $\eps$, $t$ and $m:=1/\eps$, the regularity lemma (Lemma~\ref{lem:regularity-lemma}) provides $n_0$ and $M$ such for all $n \geq n_0$  there is a vertex-partion 
$V_0,V_1, \ldots, V_{l}$ of $\kn$ satisfying Lemma~\ref{lem:regularity-lemma}(\ref{itm:regularity-a})--(\ref{itm:regularity-c}) 
for $G_0,G_1, \ldots, G_{t}$.

As usual, we define the reduced graph $R$
which has a vertex $\xr_i$ for each cluster $V_i$ for $1 \leq i \leq l$.
We place an edge between vertices $\xr_i $ and $\xr_j$ if the subgraph  $[V_i,V_j]$ of the respective clusters is non-empty and forms
an $(\eps,G_q)$-regular subgraph for all  $0 \leq q  \leq t$. 
Thus, $R$ is a balanced bipartite graph with at least $(1-\eps){\binom{l}{2}}$ edges.

Finally, the colouring of the edges of $K_{n,n}$, induces a \emph{majority colouring} of the edges of $R$. More precisely, we colour
each edge $\xr_i \xr_j$ of $R$  with the colour from $\{0,1,\ldots, t\}$ that appears most on the edges of the subgraph $[V_i,V_j] \subset G$ 
(in case of a tie, pick any of the densest colours).  
Note that if $\xr_i \xr_j$ is coloured~$i$ then by Lemma~\ref{lem:local-edges},
\begin{equation}\label{zweite}
[V_i,V_j]\mbox{ has at least $\frac{1}{2r^2 }(\frac{n}{2l})^2$ edges of colour $i$.} 
\end{equation}

Our next step is to verify that the majority colouring is an $r$-local colouring of $R$. To this end we need the following easy and well-known fact about regular graphs.

\begin{fact}\label{lem:regular-fact}
Let $[A,B]$ be an $\eps$-regular graph of density $d>\eps$. Then at most $\eps |A|$ vertices from $A$ have no neighbours in $B$.
\end{fact}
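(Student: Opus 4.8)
The plan is to prove Fact~\ref{lem:regular-fact} directly from the definition of $\eps$-regularity by a contradiction argument. Suppose, for contradiction, that more than $\eps|A|$ vertices of $A$ have no neighbour in $B$; call this set $X$, so $|X| > \eps|A|$. I would then take $Y := B$, which trivially satisfies $|Y| = |B| > \eps|B|$. Since every vertex of $X$ is non-adjacent to all of $B$, we have $\deg_G(X,Y) = 0$, and hence the edge density of the pair $(X,Y)$ is $0$. On the other hand, the density of $(A,B)$ is $d > \eps$.

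The $\eps$-regularity of $[A,B]$ says precisely that for all $X \subset A$, $Y \subset B$ with $|X| > \eps|A|$ and $|Y| > \eps|B|$, the discrepancy $|\deg_G(X,Y) - \deg_G(A,B)|$ (in the density normalisation used in the excerpt) is less than $\eps$. Applying this to our chosen $X$ and $Y = B$ gives $|0 - d| < \eps$, i.e.\ $d < \eps$, contradicting $d > \eps$. This contradiction shows that at most $\eps|A|$ vertices of $A$ can have no neighbour in $B$, which is the claim.

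I do not expect any real obstacle here: the statement is a one-line consequence of unpacking the definition of regularity with the extremal test pair $Y = B$. The only point requiring a little care is bookkeeping the normalisation — the excerpt's definition of $(\eps,G)$-regular writes the condition with $\deg_G$ standing for the \emph{density} of the bipartite pair (so that the threshold $\eps$ makes dimensional sense), and one must make sure the density of $(X,B)$ is indeed $0$ when $X$ has no neighbours in $B$, which is immediate. If instead one reads $\deg_G$ as a raw edge count, the same argument goes through after dividing by $|A||B|$; either way the conclusion is the stated bound $\eps|A|$.
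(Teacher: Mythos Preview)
Your argument is correct and is exactly the standard one-line derivation from the definition of $\eps$-regularity. The paper itself does not supply a proof of this fact; it simply states it as ``the following easy and well-known fact about regular graphs'' and moves on, so there is nothing to compare against --- your write-up is precisely the kind of justification one would give if asked to fill in this routine step.
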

\begin{claim}\label{cla:regular-r-local}
The colouring of the reduced graph $R$ is $r$-local.
\end{claim}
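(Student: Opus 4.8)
The plan is to show that the majority colouring of the reduced graph $R$ inherits locality from the original colouring of $\kn$, by transporting a witness of high colour-degree in $R$ back into $\kn$. Suppose, for contradiction, that some vertex $v_i$ of $R$ is incident to edges in $r+1$ distinct majority colours, say via neighbours $v_{j_1},\dots,v_{j_{r+1}}$ with the edge $v_iv_{j_s}$ coloured $c_s$ in the majority colouring, and the $c_s$ pairwise distinct. The idea is to find a single vertex $x \in V_i$ that, in $\kn$, sees all $r+1$ colours $c_1,\dots,c_{r+1}$ on edges going into $V_{j_1},\dots,V_{j_{r+1}}$ respectively; this would contradict $r$-locality of the colouring of $\kn$ at $x$.

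First I would record that, by~\eqref{zweite} (the displayed inequality after the majority colouring is defined), for each $s$ the bipartite graph $[V_i,V_{j_s}]$ contains at least $\tfrac{1}{2r^2}(\tfrac{n}{2l})^2$ edges of colour $c_s$; in particular the colour-$c_s$ subgraph of $[V_i,V_{j_s}]$ has density at least $\tfrac{1}{2r^2}$ within $[V_i,V_{j_s}]$. I would then invoke Fact~\ref{lem:regular-fact} (or a direct counting argument, since even without $\eps$-regularity of the monochromatic subgraph a simple average bounds the number of low-degree vertices): the number of vertices of $V_i$ that send \emph{no} colour-$c_s$ edge into $V_{j_s}$ is at most a small fraction of $|V_i|$ — concretely, at most $(1-\tfrac{1}{4r^2})|V_i|$, say, using that the colour-$c_s$ edges are spread over at most $|V_i|$ vertices each of degree at most $|V_{j_s}|$. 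Summing this bad set over the $r+1$ indices $s=1,\dots,r+1$, the total number of vertices of $V_i$ that fail to see at least one of the colours $c_1,\dots,c_{r+1}$ (in the respective cluster) is at most $(r+1)\cdot(1-\tfrac{1}{4r^2})|V_i|$; for this to be less than $|V_i|$ I need the elementary inequality $(r+1)(1-\tfrac{1}{4r^2}) < 1$, which unfortunately fails. So the naive union bound is too weak, and that is exactly the main obstacle.

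To get around this, the cleaner route — and the one I would actually pursue — is to avoid pinning down a single vertex and instead work with the edge cover / vertex cover structure directly, mirroring the proof of Corollary~\ref{cor:robust-cycle-bipartite}. Note that an $r$-local colouring of $\kn$ has the property that the colours appearing on edges incident to any fixed vertex number at most $r$; the majority colour of $[V_i,V_{j_s}]$ is, in particular, a colour that appears on \emph{some} edge with an endpoint in $V_i$. So every majority colour incident to $v_i$ in $R$ is a colour incident to $V_i$ in $\kn$. Thus it suffices to bound the number of colours appearing on edges incident to the \emph{cluster} $V_i$ — but this can be up to $r|V_i|$, so again I must be smarter. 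The correct observation, which I would isolate as the crux, is this: if $v_iv_{j_s}$ has majority colour $c_s$, then by~\eqref{zweite} a positive proportion (at least $\tfrac{1}{2r^2}$, after dividing by $|V_i||V_{j_s}|$) of the \emph{ordered pairs} in $V_i \times V_{j_s}$ are colour-$c_s$ edges, hence by averaging there is a fixed vertex $y_s \in V_{j_s}$ with $\deg_{c_s}(y_s, V_i) \ge \tfrac{1}{2r^2}|V_i|$. Now I have $r+1$ vertices $y_1,\dots,y_{r+1}$, but they lie on the $A$-side clusters whereas $V_i$ may be on either side; since $R$ is bipartite all $V_{j_s}$ lie in the opposite bipartition class from $V_i$, so $y_1,\dots,y_{r+1}$ all lie in the same bipartition class of $\kn$ and each has $\ge \tfrac{1}{2r^2}|V_i|$ neighbours of its own colour inside $V_i$. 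A further union bound over $s$ shows that, provided $(r+1)\cdot\tfrac{1}{2r^2} \cdot |V_i|$ is enough — again requiring a genuine inequality — some vertex of $V_i$ is hit by all $r+1$; and $\tfrac{r+1}{2r^2}<1$ does hold for $r\ge 2$ (it equals $3/8$ at $r=2$ and decreases), but it only guarantees a common vertex if we need the bad sets to \emph{miss} a common point, i.e. $(r+1)(1-\tfrac{1}{2r^2})|V_i| < |V_i|$, which fails.

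The resolution, then, must genuinely use that $x$ need only witness $r+1$ colours to contradict $r$-locality, and that each colour $c_s$ is witnessed by a \emph{large} set $B_s \subseteq V_i$ of vertices (those with a colour-$c_s$ neighbour in $V_{j_s}$), of size $\ge \tfrac{1}{2r^2}|V_i|$ by the averaging above applied on the $V_i$ side rather than a single $y_s$. I would therefore argue by a weighting / double-counting argument: each $x \in V_i$ lies in at most $r$ of the sets $B_1,\dots,B_{r+1}$ (by $r$-locality at $x$), so $\sum_s |B_s| \le r|V_i|$; but $\sum_s |B_s| \ge (r+1)\cdot\tfrac{1}{2r^2}|V_i|$, and this is \emph{not} a contradiction either. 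Hence the true statement being used must be stronger: the majority colour $c_s$ of $[V_i,V_{j_s}]$ is seen by \emph{almost all} of $V_i$, not merely a $\tfrac{1}{2r^2}$-fraction. This follows if one applies Fact~\ref{lem:regular-fact} to the colour-$c_s$ subgraph of $[V_i,V_{j_s}]$ \emph{after} noting it is $(\eps', G_{c_s})$-regular of density $\ge \tfrac{1}{2r^2}>\eps'$ — which is guaranteed because $[V_i,V_{j_s}]$ is $(\eps,G_{c_s})$-regular by construction of $R$ and $[V_i,V_{j_s}]\cap G_{c_s}$ is simply the graph $G_{c_s}$ restricted there, whose density we control. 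Concretely: since $[V_i,V_{j_s}]$ is $(\eps,G_{c_s})$-regular and its $G_{c_s}$-density is $\ge \tfrac1{2r^2}>\eps$, Fact~\ref{lem:regular-fact} gives that all but $\eps|V_i|$ vertices of $V_i$ have a colour-$c_s$ neighbour in $V_{j_s}$, i.e. $|V_i \setminus B_s| \le \eps |V_i|$. Then $\bigcap_{s=1}^{r+1} B_s$ has size at least $(1-(r+1)\eps)|V_i| > 0$ since $\eps$ is tiny, so any $x$ in this intersection sees all of $c_1,\dots,c_{r+1}$ in $\kn$, contradicting that the colouring of $\kn$ is $r$-local. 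The main obstacle, as the discussion above shows, is precisely getting the right \emph{almost-all} statement for each majority colour — the naive $\tfrac{1}{2r^2}$-density bound is insufficient, and one must feed the density bound~\eqref{zweite} into the regularity of $[V_i,V_{j_s}]$ via Fact~\ref{lem:regular-fact} to upgrade it to "all but an $\eps$-fraction". I would therefore structure the write-up as: (1) state the contradiction hypothesis; (2) fix neighbours $v_{j_1},\dots,v_{j_{r+1}}$ and colours $c_1,\dots,c_{r+1}$; (3) for each $s$, combine~\eqref{zweite} with $(\eps,G_{c_s})$-regularity of $[V_i,V_{j_s}]$ and Fact~\ref{lem:regular-fact} to find $B_s \subseteq V_i$ with $|V_i \setminus B_s| \le \eps|V_i|$; (4) take $x \in \bigcap_s B_s \ne \emptyset$ and observe it sees $r+1$ colours in $\kn$, the desired contradiction.
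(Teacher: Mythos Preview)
Your final argument (steps (1)--(4) at the end) is correct and is exactly the paper's proof: assume $v_i$ sees $r+1$ majority colours $c_1,\dots,c_{r+1}$ via neighbours $v_{j_1},\dots,v_{j_{r+1}}$; for each $s$ combine the $(\eps,G_{c_s})$-regularity of $[V_i,V_{j_s}]$ with the density lower bound from~\eqref{zweite} and Fact~\ref{lem:regular-fact} to conclude that all but $\eps|V_i|$ vertices of $V_i$ send a colour-$c_s$ edge into $V_{j_s}$; then the union bound leaves at least $(1-(r+1)\eps)|V_i|>0$ vertices of $V_i$ that see all $r+1$ colours in $\kn$, contradicting $r$-locality. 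The paper states this in two sentences, going straight to Fact~\ref{lem:regular-fact}; the long detour through crude density/averaging bounds in your write-up is unnecessary, since the whole point of recording Fact~\ref{lem:regular-fact} just before the claim is to hand you the ``all but $\eps|V_i|$'' statement immediately.
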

\begin{proof}
Assume otherwise. Then  there is a vertex $\xr_i \in {V(R)}$ that sees $r+1$ different colours in $R$. By  Fact~\ref{lem:regular-fact}, all but at most $(r+1)\eps |V_i|<|V_i|$ of the vertices in $V_i$ see 
$r+1$ different colours in $K_{n,n}$, contradicting the $r$-locality of our colouring. 
\end{proof}

By~\eqref{equ:colour-t+1}, and by~\eqref{zweite},
we know that $R$ has at most $|E(G_{0})|\frac{4l^2\cdot 2r^2}{n^2} \leq \eps l^2$ edges of colour $0$.
Delete these edges and use Corollary~\ref{cor:robust-cycle-bipartite} to cover all but $o(l)$ vertices of $R$ with $(2r-1)r$  vertex-disjoint 
monochromatic matchings $M^1, \ldots, M^{(2r-1)r}$
of spectrum $1, \ldots, t$. 

We finish by applying  \L uczak's technique for blowing up matching to cycles~\cite{Luc99}. This is done by using the following (by now well-known) lemma.

\begin{lemma}\label{lem_connmatchTOcycles}
Let $t \geq 1$ and $\gamma >0$ be fixed. Suppose   $R$ is the edge-coloured reduced graph of an edge-coloured graph~$H$, for some $\gamma$-regular partition, such that each edge 
$\xr \yr$ of $R$ corresponds to a $\gamma$-regular pair of density at least $\sqrt\gamma$ in the colour of $\xr \yr$.\\
If all but at most $ \gamma |V(R)|$ vertices of $R$ can be covered with $t$ disjoint connected monochromatic matchings, 
then there is a set of at most $t$ monochromatic disjoint cycles in $H$, which together cover all but at most $10\sqrt\gamma|V(H)|$ vertices of $H$.
\end{lemma}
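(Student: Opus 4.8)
The plan is to use the now-standard \L uczak technique: extract one monochromatic cycle from each connected monochromatic matching in $R$ by blowing up the matching edges into super-regular pairs inside the corresponding clusters of $H$ and threading them together along a connecting path in the relevant colour class of $R$. First I would fix a connected monochromatic matching $M^s\subset R$, say in colour $q$. Since $M^s$ lies in a single connected component of the colour-$q$ subgraph of $R$, there is a walk in that component visiting (an edge endpoint of) every matching edge of $M^s$; by standard arguments one can choose this walk to be short, i.e.\ of length $O_\gamma(1)$, and to avoid, up to a bounded number of clusters, the clusters used by the other matchings. Each matching edge $\xr_a\xr_b$ corresponds to a $\gamma$-regular pair $[V_a,V_b]$ of density at least $\sqrt\gamma$ in colour $q$; by Fact~\ref{lem:regular-fact}-type cleaning one passes to subclusters $V_a'\subset V_a$, $V_b'\subset V_b$ of size $(1-O(\gamma))|V_a|$ that induce an $(\gamma',\delta)$-super-regular pair in colour $q$, with a few vertices set aside on each connecting cluster to accommodate the linking path.

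The second step is the linking. Along the connecting walk, consecutive clusters form $\gamma$-regular colour-$q$ pairs, so one can greedily pick a short monochromatic path in colour $q$ that enters the first super-regular pair of $M^s$, leaves the last one, and passes through a bounded set of reserved vertices in the intermediate clusters; this uses only $o(|V(H)|)$ vertices in total over all $s$. Then, in each super-regular pair one invokes the blow-up/connecting lemma for super-regular pairs (the standard fact that a super-regular bipartite pair has a Hamilton path between any two prescribed endpoints in opposite sides, after deleting a tiny balanced remainder) to route a colour-$q$ path covering all but $O(\gamma)$ of the vertices of that pair, with endpoints matching the entry/exit points dictated by the linking path. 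Concatenating the linking path with these long paths through the pairs of $M^s$ yields a single monochromatic cycle $C_s$ in colour $q$ that covers all but an $O(\gamma)$-fraction of $\bigcup\{V_a: \xr_a\in V(M^s)\}$. Doing this for each of the at most $t$ matchings, making the successive choices of connecting paths disjoint from previously used vertices (possible since each uses only $o(|V(H)|)$ vertices and the $M^s$ are vertex-disjoint), produces at most $t$ disjoint monochromatic cycles whose union misses only the exceptional set $V_0$ (size $<\gamma n$), the clusters left uncovered in $R$ (at most $\gamma|V(R)|$ clusters, hence $\le\gamma n$ vertices), the $O(\gamma)$-fractions discarded from each used pair, and the $o(n)$ vertices spent on linking; a crude accounting bounds the total uncovered by $10\sqrt\gamma|V(H)|$.

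The main obstacle, and the only place requiring care, is making the linking paths of the different matchings genuinely vertex-disjoint while still leaving each super-regular pair essentially intact. Each connecting path for $M^s$ wants to pass through intermediate clusters, some of which may belong to other matchings $M^{s'}$; the fix is to reserve in advance, in every cluster, a bounded pool of ``portal'' vertices (an $o(1)$-fraction) that is excluded from all the super-regular pairs and is used exclusively for linking, and to process the matchings one at a time, deleting used portal vertices as we go — since only $O_\gamma(1)$ portal vertices are consumed per matching and there are at most $t$ matchings, the pools never run dry. A second, more routine point is handling balance: the two long paths and the linking path must enter and leave each pair on prescribed sides so that the final walk closes up into a cycle and so that each super-regular pair is traversed by a path of the correct parity/side-configuration; this is arranged by choosing the connecting walk in $R$ to alternate sides appropriately and by the standard freedom in the choice of endpoints in a super-regular pair. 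None of this is new — it is exactly the argument of \L uczak~\cite{Luc99} as used in~\cite{GRSS06,GRSS11,LSS15} — so I would state Lemma~\ref{lem_connmatchTOcycles} and refer to those sources for the technical details rather than reproduce them.
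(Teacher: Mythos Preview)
Your proposal is correct and follows essentially the same route as the paper: the paper also gives only a sketch of the standard \L uczak argument (connect the matching pairs cyclically by short monochromatic paths through typical vertices, then fill each regular pair with a long path), and likewise defers the technical details to the literature. If anything, your write-up is more careful than the paper's own sketch in handling disjointness of the linking paths and the parity/endpoint constraints.
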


For completeness, let us give an outline of the proof of Lemma~\ref{lem_connmatchTOcycles}.

\begin{proof}[Sketch of a proof of Lemma~\ref{lem_connmatchTOcycles}]
We start by connecting in $H$ the pairs corresponding to matching edges with monochromatic paths of the respective colour, following their connections in $R$. We do this in a cyclic manner, that is, if $v_{i_1}v_{j_1},\dots, v_{i_s}v_{j_s}$
forms the matching, then we take paths $P_1,\dots, P_s$ in a way that
 $P_\ell$ connects $V_{j_\ell}$ and $V_{i_{\ell+1}}$ (modulo $\ell$). The end-vertex of each $P_\ell$  can be taken as a typical vertex of the graph $ [ V_{i_{\ell}}, V_{j_{\ell}}]$ or $[V_{i_{\ell+1}},V_{j_{\ell+1}}]$
 (this is important as we later have to `fill up' the matching edges accordingly). 
We find the connecting paths simultaneously for all matchings.

Note that, as $t$ is fixed,  the paths chosen above together consume only a constant number of vertices of~$H$.
So we can we connect the monochromatic paths using the matching edges, blowing up the edges to long paths, where regularity and density ensure that we can fill up all but a small fraction of the corresponding pairs.
This gives the desired cycles. 

A more detailed explanation of this argument can be found in the proof of the main result of~\cite{GRSS06b}.
\end{proof}

\subsection{The absorbing method}\label{absorbing-method} 
In this subsection we prove Theorem~\ref{thm:complete}. We apply a well known absorbing argument introduced in~\cite{EGP91}. 
To this end we need a few tools.

Call a balanced bipartite subgraph $H$ of a $2n$-vertex graph $\eps$-\emph{hamiltonian}, if any balanced bipartite subgraph of $H$ with at least
$2(1-\eps) n$ vertices  is hamiltonian. The next lemma is a combination of results from~\cite{Hax97,PRR02} and can be found in~\cite{LSS15} in the following explicit form.

\begin{lemma}
\label{lem:eps-hamiltonian}
For any $1 >\gamma > 0,$ there is an $n_0 \in
\mathbb{N}$ such that any
balanced bipartite graph on $2n \geq 2n_0$ vertices and of
edge density at least $\gamma$ has a $\gamma/4$-hamiltonian subgraph
of size at least $\gamma^{3024/\gamma} n/3$.
\end{lemma}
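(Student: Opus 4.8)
The plan is to follow the two-step strategy of Haxell~\cite{Hax97} and Peng--R\"odl--Ruci\'nski~\cite{PRR02}: first pass to a subgraph of high minimum degree, then apply a regularity/blow-up argument to obtain robust hamiltonicity inside one super-regular pair. First I would take the given balanced bipartite graph $G$ on $2n$ vertices with edge density at least $\gamma$ and repeatedly delete vertices of degree less than $\gamma n/2$ in the current graph; since each deletion removes fewer than $\gamma n/2$ edges and $G$ has at least $\gamma n^2$ edges, this process stops while a nonempty subgraph remains, and in fact one checks that the surviving graph $G'$ is still reasonably large (a constant fraction of $n$) and has minimum degree at least $\gamma/2$ times the size of each of its sides. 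Alternatively one can pass directly to a subgraph with both sides of equal size $n'\ge c_\gamma n$ and relative minimum degree bounded below by a function of $\gamma$ only.

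Next I would apply the Szemer\'edi regularity lemma (Lemma~\ref{lem:regularity-lemma}, with $t=1$ and the trivial prepartition into the two sides) to $G'$ with a sufficiently small regularity parameter $\eta=\eta(\gamma)$, obtaining a reduced graph $R$ on the clusters. Because $G'$ has relative minimum degree bounded below by a constant depending only on $\gamma$, the reduced graph $R$ is itself a balanced bipartite graph with minimum degree a constant fraction of its side size, and the non-exceptional pairs are $\eta$-regular of density bounded below in terms of $\gamma$. In particular $R$ has an edge $v_iv_j$ corresponding to an $\eta$-regular pair $[V_i,V_j]$ of density at least, say, $\gamma/8$; after discarding at most $\eta|V_i|$ low-degree vertices from each side we may assume $[V_i,V_j]$ is in addition super-regular. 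The size of a cluster is at least $n'/M(\eta)$, and tracking the constants through the regularity lemma and the iterated deletions gives the claimed bound of the form $\gamma^{3024/\gamma}n/3$ on the size of the pair we keep; the exact exponent is exactly the sort of routine bookkeeping that produces such an unwieldy constant.

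Finally, I would let $H$ be (the bipartite graph on) this super-regular pair $[V_i,V_j]$, and verify it is $\gamma/4$-hamiltonian: given any balanced bipartite subgraph $H''\subseteq H$ retaining at least a $(1-\gamma/4)$-fraction of each side, the restriction is still $\eta'$-regular and has minimum degree a constant fraction of its side size provided $\eta\ll\gamma$, so the Blow-up Lemma (or the classical fact that a super-regular pair is hamiltonian-connected, hence hamiltonian) yields a Hamilton cycle of $H''$. The main obstacle, and the only genuinely delicate point, is the constant-chasing: one must choose the regularity parameter $\eta$ small enough relative to $\gamma$ that super-regularity survives the removal of a $\gamma/4$-fraction of vertices, while simultaneously keeping $M(\eta)$ under control so that the surviving pair still has size at least $\gamma^{3024/\gamma}n/3$. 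Since this lemma is quoted verbatim from~\cite{LSS15}, in the write-up I would simply cite that source and sketch only the above outline rather than reproduce the computation.
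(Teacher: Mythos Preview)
The paper does not prove this lemma at all: it merely states that the result is a combination of \cite{Hax97,PRR02} and appears in this explicit form in \cite{LSS15}. Your closing sentence, to simply cite that source, is exactly what the paper does, so on that level you match.

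That said, the sketch you give would not actually produce the stated bound, and the obstacle is precisely the ``constant-chasing'' you flag as routine. If you apply the full Szemer\'edi regularity lemma (Lemma~\ref{lem:regularity-lemma}) with parameter $\eta=\eta(\gamma)$, the number $M=M(\eta)$ of clusters is a tower function of $1/\eta$, so each cluster has size only $n'/M$, which is tower-type small in $1/\gamma$. This is incompatible with the claimed lower bound $\gamma^{3024/\gamma}n/3$, which is merely single-exponential in $1/\gamma$. No amount of bookkeeping turns a tower into $\gamma^{C/\gamma}$.

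The arguments in \cite{Hax97,PRR02} avoid the regularity lemma and instead use an elementary iterated density/bisection argument to locate a pair of vertex sets between which the graph is itself $\eta$-regular (one regular pair, not a full partition). Each iteration halves the sets and boosts density slightly, and the process terminates after $O(1/\gamma)$ rounds, which is exactly how the exponent $3024/\gamma$ arises. So if you want to sketch a genuine proof rather than just cite, replace the appeal to Lemma~\ref{lem:regularity-lemma} by this direct regularisation of a single pair.
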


The following lemma is  taken from~\cite{CS16}.

\begin{lemma}\label{lem:absorb}
Suppose that $A$ and $B$ are vertex sets with $|B| \leq |A|/r^{r+3}$ and the edges of the 
complete bipartite graph between $A$ and $B$ are  $r$-locally coloured. Then all vertices 
of $B$ can be covered with at most $r^2$ disjoint monochromatic cycles.
\end{lemma}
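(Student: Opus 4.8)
The plan is to induct on $r$, treating the very large ratio $|A|\ge |B|\,r^{r+3}$ as a resource that is spent level by level. For the base case $r=1$ the colouring between $A$ and $B$ is monochromatic, so since $|A|\ge |B|$ one may thread all of $B$ onto a single cycle $b_1a_1b_2a_2\cdots b_ka_kb_1$ using distinct connectors $a_i\in A$; a single cycle suffices, matching $1^2$. For $r\ge 2$ the first move is to assign to each $b\in B$ a \emph{majority colour} $c(b)$: as the colouring is $r$-local, $b$ meets at most $r$ colours on its $|A|$ edges, so one of them, namely $c(b)$, occurs on at least $|A|/r$ of them.

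The central observation is that, although an $r$-local colouring may globally use unboundedly many colours, only boundedly many can occur as majority colours. Indeed, suppose $b_1,\dots,b_t$ have pairwise distinct majority colours $c_1,\dots,c_t$. Each vertex $a\in A$ lies in $N_{c_i}(b_i)$ for at most $r$ indices $i$, since the $c_i$ are distinct and $a$ sees at most $r$ colours; summing over $A$ gives $t\cdot(|A|/r)\le\sum_i |N_{c_i}(b_i)|\le r|A|$, hence $t\le r^2$. This is the same flavour of double count, now applied on the $A$-side, that underlies the bounded vertex covers in Corollary~\ref{cor:robust-cycle-bipartite}.

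It then remains to cover each majority-colour class $B_c=\{b:c(b)=c\}$ with few monochromatic $c$-cycles, and here the abundance of $A$ is essential. Within colour $c$ every $b\in B_c$ has at least $|A|/r$ neighbours, so I would cluster $B_c$ according to the intersection pattern of these neighbourhoods: two vertices whose $c$-neighbourhoods are essentially disjoint cannot sit consecutively on a common $c$-cycle, but a pairwise (almost) disjoint family of sets each of size $\ge |A|/r$ has at most $r$ members, so $B_c$ splits into boundedly many clusters inside each of which the $c$-neighbourhoods overlap substantially. On a single cluster I would thread one spanning $c$-cycle greedily, at each step picking a connector in the large common $c$-neighbourhood of two consecutive vertices that avoids the few connectors already used; the hypothesis $|B|\le |A|/r^{r+3}$ ensures enough unused connectors always remain, and since only $O(|B|)$ connectors are consumed, $A$ stays essentially as large as before, allowing the reduction to be iterated.

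Assembling the counts is where the care lies, and it is the step I expect to be the main obstacle. One must organise the two reductions so that the totals combine to $r^2$ rather than to the naive product of the two bounds above; a clean way is to phrase the whole argument as the induction alluded to at the start, peeling off one colour and a bounded number of cycles per level while preserving the ratio between $|A|$ and the still-uncovered part of $B$, with the factor $r^{r+3}$ absorbing the loss at each of the $\le r$ levels. The genuinely hard part, though, is not the global bookkeeping but guaranteeing the threading: a colour-degree lower bound of $|A|/r$ does \emph{not} by itself force common neighbours between two vertices when $r\ge 2$, so the real work is converting the clustering into honest spanning cycles and tracking the distinct-connector budget so that the large initial ratio survives all the way down the recursion.
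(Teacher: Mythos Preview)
The paper does not prove this lemma; it is quoted from~\cite{CS16} and used as a black box. So there is no in-paper argument to compare against, and I can only assess your sketch on its own merits.

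There is a genuine arithmetical gap. Your double count correctly bounds the number of distinct majority colours by $r^2$, but this is too coarse for the target of $r^2$ cycles, because your own clustering step then splits each majority-colour class $B_c$ into up to $r$ pieces (a pairwise almost-disjoint family of $c$-neighbourhoods of size $\ge |A|/r$ has at most $r$ members), and the two bounds multiply to $r^3$. The inductive repair you gesture at does not close this unless a single level eliminates a colour from \emph{every} remaining vertex of $B$, which you have not argued. What actually brings the colour count down to $r$ (rather than $r^2$) is a nested-neighbourhood argument: iteratively pick some $b_i$ whose majority colour $c_i$ in the current set $A_{i-1}$ is new, and pass to $A_i=N_{c_i}(b_i)\cap A_{i-1}$; since every $a\in A_i$ then sees the distinct colours $c_1,\dots,c_i$, this halts in at most $r$ rounds with $|A_s|\ge |A|/r^r$, and at termination every $b\in B$ has its $A_s$-majority colour among $c_1,\dots,c_s$. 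This is where the factor $r^r$ in the hypothesis is spent, and it is what leaves room for a further factor-$r$ split inside each colour class.

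The second gap you flag yourself: even granted at most $r$ clusters per colour, you have not shown that a single cluster can be covered by one monochromatic cycle. Adjacency in your auxiliary graph (``overlap substantially'') does not produce a Hamiltonian ordering, and a colour-degree lower bound of $|A_s|/r$ does not force pairwise common neighbours once $r\ge 2$. Turning a cluster into one cycle still requires a further refinement guaranteeing that, in the ordering you choose, consecutive $B$-vertices share enough unused common $c$-neighbours in $A_s$; the residual slack of roughly $r^3$ in $|A_s|/|B|$ is what is meant to pay for this, but the argument has to be made.
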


\begin{proof}[Sketch of a proof of Theorem~\ref{thm:complete}]
Here we only prove part (b) of Theorem~\ref{thm:complete}, since the proof of (a) is almost identical. The differences are discussed at the end of the section.

Let $A$ and $B$ be the two partition classes of the $r$-locally edge coloured $K_{n,n}$.
We assume that $n \ge n_0$, where we specify $n_0$ later.
Pick subsets $A_1 \subseteq A$ and $B_1 \subseteq B$ of size $\lceil n / 2 \rceil$ each.
Say red is the majority colour of $[A_1,B_1]$. Then by Lemma~\ref{lem:local-edges}, there are at least
$n^2/8r^2$ 
red edges in $[A_1,B_1]$. 

Lemma~\ref{lem:eps-hamiltonian} applied with $\gamma = 1/10r^2$ yields a red $\gamma/4$-hamiltonian sub\-graph $[A_2,B_2]$ of $[A_1,B_1]$ with 
\[
|A_2|=|B_2|\ge  \gamma^{3024/\gamma} |A_1|/3  \ge \gamma^{3024/\gamma} n/7 .
\]
Set $H := G - (A_2 \cup B_2)$, and note that each bipart of $H$ has order at least $\lfloor n / 2 \rfloor$.
Let $\delta := \gamma^{4000/\gamma}$.
Assuming $n_0$ is large enough, Lemma~\ref{lem:asymptotic-cycle-partition}(b) provides $(2r-1)r$ monochromatic vertex-disjoint cycles  covering all but at most
$2 \delta n$ vertices of~$H$.
Let $X_A \subseteq A$ (resp.~$X_B \subseteq B$) be the set of uncovered vertices in $A$ (resp.~$B$).
Since we may assume none of the monochromatic cycles is an isolated vertex, we have $|X_A|=|X_B| \le \delta n$.

By the choice of $\delta$, and since we assume $n_0$ to be sufficiently large, we can apply Lemma~\ref{lem:absorb}
to the bipartite graphs $[A_2,X_B]$ and $[B_2,X_A]$.
This gives~$2r^2$ vertex-disjoint monochromatic cycles that together cover $X_A \cup X_B$.  
Again, we assume none of these cycles is trivial.
As $|X_A|=|X_B| \le \delta n$, we know that the remainder of $[A_2,B_2]$ contains
a red Hamilton cycle.
Thus, in total, we found a cover of $G$ with at most $(2r-1)r+2r^2+1 \leq 4 r^2$
vertex-disjoint monochromatic cycles.

As claimed above, the proof of Theorem~\ref{thm:complete}(a) is very similar. 
The main difference is that instead of an $\eps$-hamiltionan subgraph we use a large red 
triangle cycle. A triangle cycle $T_k$ consists of a cycle on $k$ vertices $\{v_1, \ldots , v_k\}$ and $k$ additional vertices $A = \{a_1, \ldots a_k\}$, where
$a_i$ is joined to $v_i$ and $v_{i+1}$ (modulo~$k$). Note that $T_k$ remains hamiltionan after the deletion of any subset of vertices of~$A$. We use some classic Ramsey theory
to find a large monochromatic triangle cycle $T_k$ in an $r$-locally coloured $K_n$, as
shown in~\cite{CS16}. Next, Lemma~\ref{lem:asymptotic-cycle-partition}(a) guarantees we can cover most vertices of $K_n \setminus T_k$ with  $r(r+1)/2$ monochromatic cycles. We finish by absorbing
the remaining vertices $B$ into $A$ with only one application of Lemma~\ref{lem:absorb}, thus producing $r^2$ additional cycles. As noted above, the remaining part of $T_k$ is hamiltionan and so
we have partitioned $K_n$ into $r(r+1)/2 + r^2 +1 \leq 2r^2$ monochromatic cycles.
\end{proof}

\section{Bipartite graphs with 2-local colourings}
\label{sec:2-local-path-partition}

In this section we prove Theorem~\ref{thm:bipartite-2-local-paths-partition} and Theorem~\ref{thm:ramsey}.
We start by specifying the structure of 2-local colourings of $K_{n,n}$.
Let $G$ be any graph, and let the edges of $G$ be coloured arbitrarily with  colours in $\mathbb{N}$.
We denote by $C_i$ the subgraph of $G$ induced by vertices that are adjacent to any edge of colour $i$.
Note that $C_i$ can contain edges of colours other than $i$.
If for colours $i,j$ the intersection $V(C_i) \cap V(C_j)$ is empty, we can merge $i$ and $j$ as we are only interested in monochromatic paths. 
We call an edge colouring \emph{simple}, if $V(C_i) \cap V(C_j) \neq \emptyset$ for all colours $i,j$ that appear on an edge.

In~\cite{GLS+87} it was shown that the number of colours in a simple 2-local colouring of $K_n$ is bounded by 3. 
In the next lemma we will see that for $K_{n,n}$ the number of colours in a simple 2-local colouring is bounded by 4. 
For $r \geq 3$, however, simple $r$-local colourings can have an arbitrary large number of colours: 
take a $t \times t$ grid $G$ and colour the edges of the column $i$ and row $i$ with colour $i$ for $1\leq i \leq t$. Then add edges of a new colour $t+1$ until $G$ is complete (or complete bipartite) and observe that $G$ is 3-locally edge coloured and simple, but the total number of colours is $t+1$.

In what follows, we denote partition classes of a bipartite graph $H$ (which we imagine as either top and bottom) by $\bitop{H}$ and $\bibot{H}$.
\begin{lemma}\label{3possiblecolourings}
Let $K_{n,n}$ have a simple 2-local colouring. Then the total number of colours is at most four. In particular, if there are (edges of) colours 1,2,3 and~4, then
\begin{itemize}
\item $\bitop{\kn}= \bitop{C_1  \cap C_2 } \cup \bitop{C_3  \cap C_4 }$ and	
\item $\bibot{\kn}= \bibot{C_1 \cap C_3} \cup \bibot{C_1 \cap C_4} \cup \bibot{C_2  \cap C_3 } \cup \bibot{C_2  \cap C_4 }$
\end{itemize}
as shown in Figure~\ref{fig:c} (modulo swapping colours and swapping $\bitop\kn$ with $\bibot\kn$).
\end{lemma}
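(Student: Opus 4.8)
### Plan for proving Lemma~\ref{3possiblecolourings}

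My plan is to analyse the colour structure by looking at what colours each vertex sees, using the $2$-locality and the completeness of $\kn$ in tandem. Fix a vertex $v \in \bitop{\kn}$; since the colouring is $2$-local, $v$ is incident with edges in at most two colours, say colours in a set $S(v)$ with $|S(v)| \le 2$. Because $\kn$ is complete bipartite, every vertex $u \in \bibot{\kn}$ is joined to $v$, so the colour of $uv$ lies in $S(v)$; hence the colour seen by $u$ on the edge $uv$ is one of the (at most two) colours of $v$. This is the basic tension I would exploit: a vertex on one side ``transmits'' its $\le 2$ colours to the whole opposite side.

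The key steps, in order. First I would argue that there exist two vertices $\bitop{x}, \bitop{y} \in \bitop{\kn}$ (or symmetrically on the bottom) whose colour sets $S(\bitop{x}), S(\bitop{y})$ together contain every colour appearing in the colouring: indeed if some colour $i$ appears, pick an edge $\bitop{x}\,\bibot{z}$ of colour $i$; every colour $j$ appearing on some edge $\bitop{a}\,\bibot{b}$ must, by simplicity, satisfy $V(C_i) \cap V(C_j) \ne \emptyset$, and I would chase this through the at-most-two-colours-per-vertex condition to pin all colours into a bounded set. Concretely: $\bitop x$ sees $\le 2$ colours; every bottom vertex sees one of those two on its edge to $\bitop x$. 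If the colouring uses a colour $j \notin S(\bitop x)$, then $j$ is seen only via edges not meeting $\bitop x$; take such an edge $\bitop y\, \bibot w$ of colour $j$ with $\bitop y \ne \bitop x$. Now every bottom vertex sees a colour of $S(\bitop x)$ on its edge to $\bitop x$ and a colour of $S(\bitop y)$ on its edge to $\bitop y$, so it lies in $C_p \cap C_q$ for some $p \in S(\bitop x)$, $q \in S(\bitop y)$; this already gives the four-set decomposition of $\bibot{\kn}$ once I know $S(\bitop x) = \{1,2\}$, $S(\bitop y) = \{3,4\}$ are disjoint. So the remaining point is to rule out $|S(\bitop x) \cup S(\bitop y)| = 3$ and to show no fifth colour can appear. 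For a fifth colour $k$: it is seen by no bottom vertex on its edge to $\bitop x$ or $\bitop y$ (those are used up), so an edge of colour $k$ has both endpoints outside $\{\bitop x, \bitop y\}$ on top; but then simplicity forces $C_k$ to meet $C_1,\dots,C_4$, and a short case analysis on which top vertices can carry colour $k$ (each top vertex already constrained to $\le 2$ colours, and bottom vertices already saturated) yields a contradiction. The overlap case $|S(\bitop x)\cup S(\bitop y)|=3$ I would handle by observing it would let us merge colours or would contradict that colours $1,2,3,4$ all genuinely appear — essentially the hypothesis ``there are edges of colours $1,2,3,4$'' together with simplicity forces the two pairs to be disjoint, else we could cover all colours with $\le 3$ of the $C_i$ and re-index.

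Having fixed $S(\bitop x) = \{1,2\}$ and $S(\bitop y) = \{3,4\}$, the bottom decomposition $\bibot{\kn} = \bibot{C_1 \cap C_3} \cup \bibot{C_1 \cap C_4} \cup \bibot{C_2 \cap C_3} \cup \bibot{C_2 \cap C_4}$ is immediate from the transmission argument above. For the top decomposition, I would run the symmetric argument: pick $\bibot{x}, \bibot{y}$ on the bottom whose colour sets cover all four colours; by the same reasoning their colour sets are disjoint pairs, and a parity/consistency check (each colour must appear on an edge joining the relevant $C_i$'s on both sides) shows the induced pairing on top must be $\{1,2\}$ and $\{3,4\}$ as well — this uses that colour $1$ is seen by $\bitop x$ and must reach the bottom, colour $3$ by $\bitop y$, and the bipartite incidence structure is consistent only with the pairing matching Figure~\ref{fig:c}. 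Then $\bitop{\kn} = \bitop{C_1 \cap C_2} \cup \bitop{C_3 \cap C_4}$, possibly after swapping the roles of colours or of top and bottom, as stated.

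The main obstacle I anticipate is the bookkeeping in the case analysis that excludes a fifth colour and forces the two colour-pairs to be disjoint rather than overlapping: there are several configurations for where a would-be fifth-colour edge can sit, and one must repeatedly use both ``$\le 2$ colours per vertex'' and ``bottom vertices are already saturated by $\bitop x, \bitop y$'' to close each branch. The transmission principle keeps these branches short, but getting the indexing of colours to line up with Figure~\ref{fig:c} — so that the stated equalities hold on the nose modulo the allowed swaps — will require care, since a priori the top pairing could be any perfect matching on $\{1,2,3,4\}$ and one must check it is compatible with the bottom pairing.
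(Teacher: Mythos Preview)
Your plan is workable in outline but has two genuine gaps, and fixing them essentially recovers the paper's much shorter argument. The paper's proof rests on a single observation: if $v \in V(C_i \cap C_j)$ and $w \in V(C_k \cap C_\ell)$ with $\{i,j\} \cap \{k,\ell\} = \emptyset$, then $v$ and $w$ lie in the same class of $\kn$, since otherwise the edge $vw$ would have no legal colour. Simplicity makes all six pairwise intersections $V(C_a \cap C_b)$ non-empty; the observation forces each of the three complementary pairs $\{C_1\cap C_2,\, C_3\cap C_4\}$, $\{C_1\cap C_3,\, C_2\cap C_4\}$, $\{C_1\cap C_4,\, C_2\cap C_3\}$ to sit entirely in a single class, and pigeonhole puts two of these pairs into the same class. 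Relabelling gives the bottom decomposition; the remaining pair lands on top because each colour meets both classes; and a fifth colour is impossible since any vertex seeing it would then see three colours.

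Your first gap is the overlap case $|S(\bitop x)\cup S(\bitop y)|=3$. The stated fix (``would let us merge colours or would contradict that colours $1,2,3,4$ all genuinely appear'') is not correct: if $S(\bitop x)=\{1,2\}$ and $S(\bitop y)=\{1,3\}$, colour $4$ can perfectly well appear elsewhere, and nothing you wrote forces a contradiction or a merge --- you have simply chosen a bad $\bitop y$. What actually rescues you is precisely the paper's observation: by simplicity $V(C_3\cap C_4)\neq\emptyset$, and any such vertex must lie on the same side as $\bitop x\in V(C_1\cap C_2)$, hence on top; take that vertex as $\bitop y$. (This also presumes $|S(\bitop x)|=2$, which needs its own short argument.) Your second gap is the ``parity/consistency check'' that the top pairing must be $\{1,2\},\{3,4\}$. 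Running the symmetric argument from below does not pin this down, since the bottom already contains vertices realising all four pairings $\{1,3\},\{1,4\},\{2,3\},\{2,4\}$, so the bottom-based argument could just as well yield a different top pairing. Again the paper's observation is what you need: a top vertex in $C_1\cap C_3$ would have no legal edge to a bottom vertex in $C_2\cap C_4$, and you have just produced one. So both holes are filled by the same one-line fact the paper isolates up front, after which the vertex-picking scaffold becomes superfluous.
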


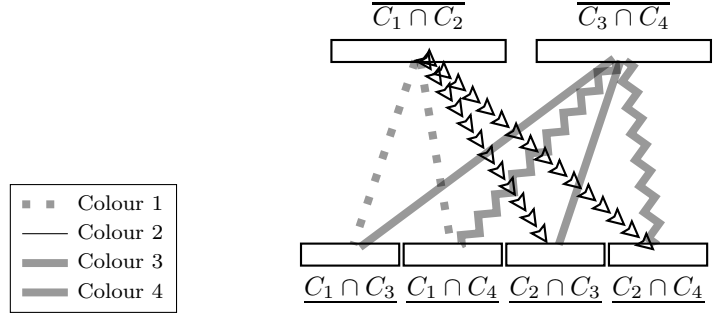
\begin{figure}
\captionsetup[subfigure]{labelformat=empty}
\centering
\begin{subfigure}[b]{0.3\textwidth}
\begin{tikzpicture}
\begin{customlegend}[legend cell align=left, 
legend entries={ 
	Colour 1, Colour 2, Colour 3, Colour 4
},
legend style={at={(0,0)},font=\footnotesize}] 
\addlegendimage{draw,  opacity=0.4,line width=3, loosely dashed}
\addlegendimage{draw, decorate,decoration={shape backgrounds,shape=dart,shape size=2mm}}
\addlegendimage{draw,  opacity=0.4,line width=3}
\addlegendimage{draw,  opacity=0.4,line width=3, decorate, decoration=zigzag}
\end{customlegend}
\end{tikzpicture}
\end{subfigure} 
\begin{subfigure}[b]{0.3\textwidth}
\begin{tikzpicture}[thick,scale=0.9,
every node/.style={}, tedge/.style={opacity=0.4,line width=3},
gfit/.style={rectangle,draw,inner sep=0pt,text width=0.0cm},
rfit/.style={rectangle,draw,inner sep=0pt,text width=0.0cm}
]

\foreach \i in {1,2,...,6}  \node (t\i) at (\i,3) {};
\foreach \i in {1,2,...,12}  \node (b\i) at (0.5*\i,0) {};

\node [gfit,fit=(t1) (t3),label=above:{$\bitop{C_1 \cap C_2}$}] {};
\node [gfit,fit=(t4) (t6),label=above:{$\bitop{C_3 \cap C_4}$}] {};

\node [gfit,fit=(b1) (b3),label=below:{$\bibot{C_1 \cap C_3}$}] {};
\node [gfit,fit=(b4) (b6),label=below:{$\bibot{C_1 \cap C_4}$}] {};
\node [gfit,fit=(b7) (b9),label=below:{$\bibot{C_2 \cap C_3}$}] {};
\node [gfit,fit=(b10) (b12),label=below:{$\bibot{C_2 \cap C_4}$}] {};

%
\foreach \b in {2,5}
\path[draw, tedge,loosely dashed] (t2) -- (b\b);

\foreach \b in {2,8}
\path[draw,  tedge]  (t5) -- (b\b);

\foreach \b in {5,11}
\path[draw,  tedge, decorate, decoration=zigzag](t5) -- (b\b);

\foreach \b in {8,11}
\path[draw, decorate,decoration={shape backgrounds,shape=dart,shape size=2mm}] (t2) -- (b\b);

\end{tikzpicture}
\end{subfigure}
\caption{The four colour case of Lemma~\ref{3possiblecolourings}.}
\label{fig:c}
\end{figure}

\begin{proof}[Proof of Lemma~\ref{3possiblecolourings}]
We can assume there are at least four colors  in total, as otherwise there is nothing to show.
We start by observing that for any four distinct colours $i,j,k,\ell$, if $v\in V(C_i\cap C_j)$ and $w\in V(C_k\cap C_\ell)$, then, by 2-locality, $v$ and $w$ cannot lie in opposite classes of $K_{n,n}$. Thus either $V(C_i\cap C_j)\cup V(C_k\cap C_\ell)\subseteq \bibot{K_{n,n}}$ or $V(C_i\cap C_j)\cup V(C_k\cap C_\ell)\subseteq \bitop{K_{n,n}}$. Fixing four colours $1,2,3,4$, and considering their six (by simplicity non-empty) intersections, the pigeon-hole principle gives that  (after possibly swapping colours and/or top and bottom class of $K_{n,n}$),
\begin{equation}\label{equ:blabla}
	V(C_1 \cap C_3)\cup V(C_2 \cap C_4)\cup V(C_1  \cap C_4)\cup V(C_2  \cap C_3)\subseteq \bibot{K_{n,n}}.
\end{equation}
As every colour must see both top and bottom of $K_{n,n}$, 
we have that $V(C_1 \cap C_2)\cup V(C_3 \cap C_4)\subseteq \bitop{K_{n,n}}.$ By 2-locality there are no other colours.
\end{proof}

\subsection{Partitioning into paths}\label{sec:paths}
In this subsection we prove Theorem~\ref{thm:bipartite-2-local-paths-partition}. For the sake of contradiction,  assume that $K_{n,n}$ is 2-locally edge-coloured such that there is no partition into three monochromatic paths.   
Since we are not interested in the actual colours of the path we can assume the colouring to be simple.   
Furthermore Theorem~\ref{thm:pokrovskiy} implies that there are at least three colours.   

A path is \emph{even} if it has an even number of vertices.
\begin{claim}\label{onethenthree}
There is no even monochromatic path  $P$ such that $\bitop{\kn \setminus P}$ is contained in $\bitop{C_i \cap C_j}$ for distinct colours $i,j$. 
\end{claim}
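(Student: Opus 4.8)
\textbf{Proof proposal for Claim~\ref{onethenthree}.}

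The plan is to argue by contradiction: suppose such an even monochromatic path $P$ exists, say of colour $c$, with $\bitop{\kn\setminus P}\subseteq \bitop{C_i\cap C_j}$ for distinct colours $i,j$. The key structural input is Lemma~\ref{3possiblecolourings}, which pins down the shape of a simple $2$-local colouring of $\kn$. The overall idea is that once the top side of the leftover graph lies entirely inside a single ``cell'' $\bitop{C_i\cap C_j}$, every top vertex of $\kn\setminus P$ sees only colours $i$ and $j$, so the leftover graph $\kn\setminus P$ is (essentially) a $2$-coloured balanced complete bipartite graph — but not quite balanced, since $P$ is even and therefore removes equally many vertices from $\bitop\kn$ and $\bibot\kn$, leaving a \emph{balanced} bipartite graph $\kn\setminus P$. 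Then I would invoke Theorem~\ref{thm:pokrovskiy} to partition $\kn\setminus P$ into at most two monochromatic paths (the split-colouring case also yields three paths on the leftover, which is still fine if we are careful, but the cleanest route is to first rule out or absorb that case). Adding $P$ back gives a partition of $\kn$ into at most $3$ monochromatic paths, contradicting our standing assumption.

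First I would set $Q:=\kn\setminus P$ and record that, since $P$ is even, $Q$ is a balanced complete bipartite graph, $Q=K_{n',n'}$ with $n'=n-|P|/2$; if $n'=0$ then $P$ alone covers $\kn$ and we are done, so assume $n'\ge 1$. Next, observe that the colouring of $Q$ induced from $\kn$ uses only colours $i$ and $j$ on edges incident to $\bitop Q\subseteq\bitop{C_i\cap C_j}$: every vertex of $\bitop{C_i\cap C_j}$ is incident to an edge of colour $i$ and an edge of colour $j$, hence by $2$-locality to no edge of any third colour. Since in a complete bipartite graph every edge is incident to a top vertex, \emph{every} edge of $Q$ has colour $i$ or $j$. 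Thus $Q$ carries a genuine $2$-colouring.

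Then I would apply Theorem~\ref{thm:pokrovskiy} to $Q$. Either $Q$ is partitioned into two monochromatic paths of distinct colours, in which case together with $P$ we get $3$ monochromatic paths covering $\kn$ — contradiction — or the $2$-colouring of $Q$ is split. In the split case, $Q$ still admits a partition into three monochromatic paths, which gives four paths in total, one too many; so the split case needs a little extra care. I expect this to be the main obstacle. The way I would handle it is to note that in a split colouring of $K_{n',n'}$ one of the two colour classes, say colour $i$, spans a connected subgraph with a Hamilton path on one of the two ``blocks'' of the split structure, and more importantly that a split colouring of $K_{n',n'}$ can always be partitioned into two monochromatic paths unless $n'$ is in a degenerate small range — or, alternatively, one can merge the end of such a path with a free end of $P$ when the colour of $P$ coincides with one of $\{i,j\}$, reducing the total count. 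Concretely: if the colour $c$ of $P$ equals $i$ (the case $c=j$ is symmetric, and if $c\notin\{i,j\}$ then $P$ must be a single vertex or edge since $P$'s top endpoint lies in $\bitop Q$'s closure... one can choose $P$ with an endpoint outside that forces $c\in\{i,j\}$, or argue directly), then the colour-$i$ path of the split decomposition of $Q$ can be concatenated with $P$ into one longer colour-$i$ path, turning the $2+1$ into $2$ and leaving room. I would phrase the argument so that whichever subcase arises, the leftover $Q$ plus $P$ collapses to at most $3$ monochromatic paths. Assembling these pieces yields the desired contradiction, proving the claim.
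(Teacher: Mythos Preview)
Your overall framework is right and matches the paper: observe that $Q:=\kn\setminus P$ is balanced complete bipartite, that it is genuinely $2$-coloured in $\{i,j\}$ because every top vertex lies in $\bitop{C_i\cap C_j}$, and then apply Theorem~\ref{thm:pokrovskiy}. The non-split branch is fine.

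The gap is in the split branch, and your proposed resolution does not work. First, the assertion that a split colouring of $K_{n',n'}$ can be partitioned into two monochromatic paths outside a ``degenerate small range'' is false; split colourings are exactly the obstructions to two-path partitions for all sizes. Second, your case analysis on the colour $c$ of $P$ goes in the wrong direction. You try to force $c\in\{i,j\}$ (claiming ``if $c\notin\{i,j\}$ then $P$ must be a single vertex or edge''), but nothing prevents $P$ from being a long path in a third colour: the vertices of $P$ are removed from $Q$, so $P$'s top endpoints need not lie in $\bitop{C_i\cap C_j}$ at all. In fact, in the paper's argument the conclusion is the \emph{opposite}: one shows $c\notin\{i,j\}$.

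The missing idea is to take $P$ of \emph{maximum length} among all even monochromatic paths with the stated property. Then, in the split case, let $p$ be the bottom endpoint of $P$. If $c\in\{i,j\}$, the split structure of $Q$ lets you extend $P$ by two further vertices in colour $c$, contradicting maximality; hence $c=k\notin\{i,j\}$. Now $2$-locality forces all edges from $p$ to $\bitop Q$ to have a single colour, say $i$. Choose a three-path cover of the split $Q$ consisting of two colour-$i$ paths and one colour-$j$ path, and splice the two colour-$i$ paths through the vertex $p$ into one. Together with the colour-$j$ path and the remainder of $P$ (which absorbs $p$), this yields three monochromatic paths covering $\kn$, a contradiction.
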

\begin{proof}
Suppose the contrary and let $P$ be as described in the claim and of maximum length. Since the colouring is $2$-local and $\bitop{\kn \setminus P} \subset \bitop{C_i \cap C_j}$, the graph on $\kn \setminus P$ is 2-coloured. Using Theorem~\ref{thm:pokrovskiy}, we are done unless the colouring on ${\kn \setminus P}$ is split.

In that case, let $p$ be the endpoint of $P$ in $\bibot{\kn}$. Since $\bitop{\kn \setminus P} \subset \bitop{C_i \cap C_j}$, the edges between $p$ and $\bitop{\kn \setminus P}$ have colours $i$ or $j$. So $P$ has colour $k\notin\{i,j\}$, as otherwise we could use the splitness of ${\kn \setminus P}$ to extend $P$ with two extra vertices. But then, $p$ can only see one more colour apart from $k$, so we may assume that all the edges between $p$ and $\bitop{\kn \setminus P}$ have colour $i$. Now cover $\kn \setminus P$ by two paths $P_1$ and $P_2$ of the colour $i$ and one path of the colour $j$. The paths $P_1$ and $P_2$ can be joined using the vertex $p$ to give the three required paths.
\end{proof}

Now the case of four colours of Lemma~\ref{3possiblecolourings} is easily solved:   
without loss of generality suppose that $|\bitop{C_1 \cap C_2}| \leq n/2$.   
By symmetry between colours $1$ and~$2$ we can assume that $|\bitop{C_2}| \leq |\bibot{C_2}|$.   
So there exists an even colour 2 path $P$ covering $\bitop{C_2}=\bitop{C_1\cap C_2}$ and we are done by Claim~\ref{onethenthree}. This proves the following claim.
\begin{claim}\label{drei}
The total number of colours is three.
\end{claim}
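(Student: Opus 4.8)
The plan is to rule out the four-colour case of Lemma~\ref{3possiblecolourings}, since by Theorem~\ref{thm:pokrovskiy} we already know there are at least three colours in our simple 2-local colouring. So assume for contradiction that colours $1,2,3,4$ all appear, and fix the structure given by Lemma~\ref{3possiblecolourings} (after relabelling): the top class splits as $\bitop{\kn}=\bitop{C_1\cap C_2}\cup\bitop{C_3\cap C_4}$ and the bottom class splits into the four parts $\bibot{C_i\cap C_j}$ with $i\in\{1,2\}$, $j\in\{3,4\}$, as in Figure~\ref{fig:c}. The key observation is that $\bitop{C_2}=\bitop{C_1\cap C_2}$, because colour $2$ only touches the top class inside $\bitop{C_1\cap C_2}$; likewise $\bibot{C_2}=\bibot{C_2\cap C_3}\cup\bibot{C_2\cap C_4}$.

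First I would use a counting/symmetry argument to arrange that one of the top parts is small: without loss of generality $|\bitop{C_1\cap C_2}|\le n/2$. Then, using the symmetry between colours $1$ and $2$ (both behave the same way relative to this top part), I would swap if necessary so that $|\bitop{C_2}|\le|\bibot{C_2}|$, i.e.\ the colour-$2$ subgraph has at least as many vertices on the bottom as on the top. Since $\bitop{C_2}=\bitop{C_1\cap C_2}$ is entirely seen by colour $2$, and the colour-$2$ graph is connected enough to the bottom (every top vertex here sees colour $2$, and colour $2$ reaches $\bibot{C_2\cap C_3}\cup\bibot{C_2\cap C_4}$ which is nonempty by simplicity), one can find a colour-$2$ path that alternates top–bottom and covers all of $\bitop{C_2}=\bitop{C_1\cap C_2}$; because $|\bitop{C_2}|\le|\bibot{C_2}|$ there are enough bottom vertices available, and one may take this path to have an even number of vertices (starting and ending on the bottom, or adjust parity by dropping/adding a bottom endpoint). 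Call this even path $P$.

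Now $\bitop{\kn\setminus P}=\bitop{C_3\cap C_4}$ is contained in $\bitop{C_3\cap C_4}$ for the distinct colours $3$ and $4$, so Claim~\ref{onethenthree} applies and yields a partition of $\kn$ into three monochromatic paths — contradicting our standing assumption. Hence the four-colour case is impossible, and combined with Lemma~\ref{3possiblecolourings} (which caps the number of colours at four) and Theorem~\ref{thm:pokrovskiy} (which gives at least three), the total number of colours must be exactly three. This is Claim~\ref{drei}.

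The main obstacle I expect is the path-construction step: verifying that one really can cover all of $\bitop{C_1\cap C_2}$ with a single even monochromatic colour-$2$ path while staying within the available bottom vertices of $C_2$. This needs a small argument about Hamilton paths (or near-spanning paths) in the relevant bipartite colour-$2$ subgraph — in the worst case it is a complete-bipartite-like structure between $\bitop{C_1\cap C_2}$ and $\bibot{C_2}$, where finding a spanning alternating path on the smaller side is routine, but one has to be careful about exactly which edges carry colour $2$ and about the parity bookkeeping (even number of vertices). Everything else — the reduction to the four-colour case, the symmetry swaps, and the final invocation of Claim~\ref{onethenthree} — is bookkeeping.
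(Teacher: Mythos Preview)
Your proof is correct and follows essentially the same approach as the paper's: reduce to the four-colour structure of Lemma~\ref{3possiblecolourings}, assume $|\bitop{C_1\cap C_2}|\le n/2$, swap colours $1$ and $2$ if necessary so that $|\bitop{C_2}|\le|\bibot{C_2}|$, take an even colour-$2$ path covering $\bitop{C_2}=\bitop{C_1\cap C_2}$, and apply Claim~\ref{onethenthree}.

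Your worry about the path-construction step is unfounded, and you should simply drop it. In the four-colour structure, every edge between $\bitop{C_1\cap C_2}$ and $\bibot{C_2}=\bibot{C_2\cap C_3}\cup\bibot{C_2\cap C_4}$ is forced to have colour~$2$ by $2$-locality (a top vertex there sees only colours $1,2$ and a bottom vertex there sees only colours $2,3$ or $2,4$, so the common colour is $2$). Hence the colour-$2$ bipartite graph between $\bitop{C_2}$ and $\bibot{C_2}$ is complete, and since $|\bitop{C_2}|\le|\bibot{C_2}|$ an even path covering all of $\bitop{C_2}$ exists trivially. The symmetry swap is also valid for the reason you indicate: $|\bitop{C_1}|+|\bitop{C_2}|=2|\bitop{C_1\cap C_2}|\le n=|\bibot{C_1}|+|\bibot{C_2}|$, so at least one of the two inequalities holds.
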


Our next aim is to show that the colouring looks like in Figure~\ref{fig:b}, that is, that every vertex sees two colours. For this, we need the next claim and the following definition. 
We say that a subgraph of $H \subset K_{n,n}$ is connected in colour~$i$, if every two vertices of $H$ are connected by a path  of colour $i$ in $H$.

\begin{claim}\label{greatobservation}
There is no even monochromatic path $P$ such that $\kn \setminus P$ is connected in some colour $i$.
\end{claim}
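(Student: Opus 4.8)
\textbf{Proof proposal for Claim~\ref{greatobservation}.}

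The plan is to argue by contradiction, very much in the spirit of the proof of Claim~\ref{onethenthree}. Suppose there is an even monochromatic path $P$, say of colour $k$, such that $\kn \setminus P$ is connected in some colour $i$. First I would dispose of the trivial cases: if $\kn\setminus P$ is empty or has only one vertex in each bipart we are already done (one or two extra trivial paths suffice, giving a partition into at most three paths), so we may assume $\kn\setminus P$ is reasonably large and, being connected in colour $i$, it in particular has colour-$i$ edges incident to every one of its vertices. The key structural input is Lemma~\ref{3possiblecolourings} together with Claim~\ref{drei}: the colouring is simple with exactly three colours, so every vertex lies in the intersection of (exactly) two of the three colour classes, and the bipartition of $\kn$ is described by the three-colour analogue of Figure~\ref{fig:c}. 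Using this I would pin down where the endpoints of $P$ sit and which colours they can see.

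The main idea is then to reattach $P$ to a spanning structure of $\kn\setminus P$ in colour $i$. Since $\kn\setminus P$ is connected in colour $i$ and balanced bipartite (it is $\kn$ minus an even path, hence has equally many top and bottom vertices), a standard fact about connected bipartite graphs gives a spanning connected subgraph in colour $i$, from which one extracts a colour-$i$ path, or a small bounded number of colour-$i$ paths, covering $V(\kn\setminus P)$ — here I would want to invoke that a connected bipartite graph has a path cover of size controlled by how unbalanced it is, but since our graph is balanced this should be just one or two colour-$i$ paths. The point of the evenness of $P$ is the parity bookkeeping: the endpoint $p$ of $P$ in $\bibot\kn$ (and the other endpoint in $\bitop\kn$) can be used as a hinge. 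Because $\kn\setminus P$ is \emph{connected} in colour $i$, I can choose the colour-$i$ path(s) covering it to have their endpoints at vertices adjacent to $p$ (resp.\ to the other end of $P$) in colour $i$ — unless $p$ does not see colour $i$ at all, in which case $i=k$ or $p$'s two colours are $k$ and the third colour, and one argues directly that $P$ itself can be absorbed into the colour-$i$ cover or extended. Concatenating $P$ with the colour-$i$ cover through the hinge vertices then yields a partition of $\kn$ into at most three monochromatic paths (one could even aim for two), contradicting our standing assumption.

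The step I expect to be the main obstacle is controlling the number of colour-$i$ paths needed to cover $V(\kn\setminus P)$ and making sure their endpoints can be placed where we need them to effect the concatenation with $P$: connectivity in colour $i$ gives a spanning tree in colour $i$, but turning a spanning tree of a balanced bipartite graph into a single spanning path (or into few paths with prescribed endpoints) is exactly where the parity of $P$ and the three-colour structure of Lemma~\ref{3possiblecolourings} must be used carefully, rather than as a black box. A secondary subtlety is the degenerate cases where an endpoint of $P$ fails to see colour $i$; these should be handled by a short separate argument using 2-locality (each endpoint sees at most two colours) and the fact, already exploited in Claim~\ref{onethenthree}, that a colour-$i$ cover of a 2-coloured remainder can be rerouted through such an endpoint.
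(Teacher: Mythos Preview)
Your approach has a genuine gap. The ``standard fact'' you want to invoke --- that a connected balanced bipartite graph admits a path cover of bounded size --- is simply false: think of a double star (two stars whose centres are joined by an edge), which is connected and balanced but needs linearly many vertex-disjoint paths. Connectivity of the colour-$i$ subgraph on $\kn\setminus P$ gives you a spanning tree in colour $i$, but that is far too weak to extract one or two colour-$i$ paths covering all vertices. You correctly flag this as the main obstacle, but nothing in Lemma~\ref{3possiblecolourings}, Claim~\ref{drei}, or the parity of $P$ will rescue it: those tools constrain which colours appear where, not the internal path-structure of the colour-$i$ subgraph. The hinge/concatenation idea therefore has nothing to attach to.

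The paper's argument sidesteps this entirely and is much shorter. Rather than seeking colour-$i$ paths, it \emph{merges} all colours other than $i$ into a single new colour on $\kn\setminus P$. This is legitimate precisely because every vertex of $\kn\setminus P$ sees colour $i$ (by the connectivity hypothesis), so by $2$-locality each vertex sees at most one non-$i$ colour; hence any path in the merged colour is automatically monochromatic in the original colouring. The merged colouring is now a genuine $2$-colouring of a balanced complete bipartite graph, and since colour $i$ is connected it is not split. Theorem~\ref{thm:pokrovskiy} then gives two monochromatic paths covering $\kn\setminus P$, which together with $P$ yield the forbidden three-path partition. The key idea you were missing is this reduction to Pokrovskiy via colour-merging, not a direct path cover in colour $i$.
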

\begin{proof}
Assume the opposite and let $P$ be as described in the claim. Simplify the colouring of $\kn\setminus V(P)$ to a $2$-colouring by merging all colours distinct from $i$. (Note that since all vertices see $i$, by 2-locality no vertex can see more than one of the merged colours.)  The new colouring is not a split colouring by the assumption on $i$. Hence Theorem~\ref{thm:pokrovskiy} applies, and we are done.
\end{proof}

\begin{claim}\label{aI}
Each vertex sees two colours.
\end{claim}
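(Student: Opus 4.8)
The plan is to argue by contradiction: suppose some vertex of $\kn$ sees at most one colour. Since every vertex has positive degree it sees exactly one colour, say colour~$1$, and by symmetry we may take it to be a top vertex $v$. Every bottom vertex is joined to $v$ by a colour-$1$ edge, so sees colour~$1$; hence, as there are exactly three colours (Claim~\ref{drei}), every bottom vertex sees at most one of the colours $2,3$. First I would dispose of the case that some bottom vertex $w$ also sees only one colour: then $w$ sees colour~$1$ (via its edge to $v$), and the same argument applied to $w$ forces every top vertex to see colour~$1$; then no vertex sees both $2$ and $3$, so $V(C_2)\cap V(C_3)=\emptyset$, contradicting simplicity. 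So every bottom vertex sees exactly two colours, and I write $\bibot{\kn}=B_2\sqcup B_3$ with $B_i$ the bottom vertices seeing colour~$i$; both parts are nonempty, as otherwise one of $2,3$ is unused.

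For the top side: since no bottom vertex sees both $2$ and $3$, the set $W:=V(C_2)\cap V(C_3)$ lies in $\bitop{\kn}$ and is nonempty by simplicity; its vertices see only $\{2,3\}$, so $W$ is disjoint from $V(C_1)$. Every vertex of $S:=\bitop{\kn}\setminus W$ sees colour~$1$ (otherwise it would see only colour~$2$ or only colour~$3$, forcing every bottom vertex to see that colour, contradicting $B_2,B_3\neq\emptyset$). Writing $S=S_1\cup S_{12}\cup S_{13}$ according as a vertex sees $\{1\}$, $\{1,2\}$ or $\{1,3\}$ (so $v\in S_1\neq\emptyset$), a check of which colour two endpoints can share shows that $[S_1,\bibot{\kn}]$, $[S_{12},B_3]$ and $[S_{13},B_2]$ are complete bipartite graphs in colour~$1$. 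The heart of the argument is then a three-way case split, in each case producing an \emph{even} colour-$1$ path $P$ whose removal contradicts Claim~\ref{onethenthree} or its top--bottom mirror (which has the same proof).

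If $|S_1|+|S_{12}|\ge|B_3|$, I take $P$ inside the complete bipartite colour-$1$ graph $[S_1\cup S_{12},B_3]$, covering all of $B_3$ and equally many vertices of $S_1\cup S_{12}$, so $|V(P)|=2|B_3|$; this leaves $\bibot{\kn\setminus P}=B_2=\bibot{C_1\cap C_2}$, a contradiction. The case $|S_1|+|S_{13}|\ge|B_2|$ is symmetric, using $[S_1\cup S_{13},B_2]$ and $\bibot{C_1\cap C_3}=B_3$. In the only remaining case one has $|B_3|>|S_1|+|S_{12}|\ge|S_{12}|$ and $|B_2|>|S_1|+|S_{13}|\ge|S_{13}|$; then I would build a colour-$1$ path $P$ covering all of $S$ by stringing the vertices of $S_{12}$ along $B_3$, the vertices of $S_{13}$ along $B_2$, splicing the two strings together through one vertex of $S_1$, and hanging the remaining vertices of $S_1$ off further bottom vertices, with enough room to realise this and to make $|V(P)|$ even because $|S|<|\bibot{\kn}|$. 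Since $V(P)$ misses $W$ (which is disjoint from $V(C_1)$), this leaves $\bitop{\kn\setminus P}=W\subseteq\bitop{C_2\cap C_3}$, contradicting Claim~\ref{onethenthree}.

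I expect the routine-but-fiddly part to be the explicit routing of the colour-$1$ path in the last case together with the parity bookkeeping; the genuinely delicate point is seeing that the three cases are exhaustive and that the hypotheses of the last case are exactly the inequalities one needs to thread a colour-$1$ path through all of $S$. I would also check at the outset that Claim~\ref{onethenthree} holds verbatim with the two sides of $\kn$ interchanged, which is immediate from its proof.
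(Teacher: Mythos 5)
Your argument is correct, but it reaches the contradiction by a different route than the paper. The paper's proof, after noting $\bibot{C_2\cap C_3}=\emptyset$ and $\bitop{C_2\cap C_3}\neq\emptyset$, splits into just two cases according to whether $|\bitop{C_2\cap C_3}|$ exceeds $|\bibot{C_1\cap C_3}|$: in the first case it takes an even \emph{colour-3} path (all edges between $\bitop{C_2\cap C_3}$ and $\bibot{C_1\cap C_3}$ have colour $3$) covering $\bibot{C_1\cap C_3}$ and applies the mirror of Claim~\ref{onethenthree}; in the second it covers $\bitop{C_2\cap C_3}$ with such a path and invokes Claim~\ref{greatobservation}, since all remaining vertices lie in $C_1$ and so $\kn\setminus P$ is connected in colour~$1$. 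You instead refine the top class into $S_1\cup S_{12}\cup S_{13}\cup W$, work exclusively with colour-$1$ paths, and replace the appeal to Claim~\ref{greatobservation} by an explicit construction (your third case) of an even colour-$1$ path covering all of $S$, which the inequalities $|B_3|>|S_1|+|S_{12}|$ and $|B_2|>|S_1|+|S_{13}|$ do indeed make routable with room to spare, so the fiddly routing and parity bookkeeping you flag does go through. What the paper's use of Claim~\ref{greatobservation} buys is exactly the avoidance of that construction and of your three-way case split; what your version buys is self-containedness (only Claim~\ref{onethenthree} and its top--bottom mirror, which the paper itself uses elsewhere, e.g.\ in Claim~\ref{nocompletebipcomp}) and a cleaner treatment of degenerate vertices: your opening reduction ruling out a second vertex that sees only one colour makes explicit a point the paper glosses over when it writes $\bibot{K_{n,n}}\subset\bibot{(C_1\cap C_2)\cup(C_1\cap C_3)}$, which tacitly assumes every bottom vertex sees a second colour (harmless there, since the proof of Claim~\ref{onethenthree} only needs each remaining vertex to see colours within $\{i,j\}$).
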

\begin{proof}
Suppose that there is a vertex in $\bitop{K_{n,n}}$ that sees only colour, $1$ say. 
Then by 2-locality $\bibot{C_2 \cap C_3} = \emptyset$.
Since the colouring is simple we know that $\bitop{C_2 \cap C_3} \neq \emptyset$.  
Therefore $\bibot{K_{n,n} }\subset \bibot{(C_1 \cap C_2) \cup (C_1 \cap C_3)}$.		
If $|\bitop{C_2 \cap C_3}| > |\bibot{C_1 \cap C_3}|$, we can choose an even path of colour 3  that contains all vertices of $\bibot{C_1 \cap C_3}$ and apply Claim~\ref{onethenthree}.
Otherwise, let $P$ be an even  path of colour 3 between $|\bitop{C_2 \cap C_3}| $ and $ |\bibot{C_1 \cap C_3}|$ that covers all vertices of $\bitop{C_2 \cap C_3}$. 
Since all remaining vertices lie in $C_1$, the subgraph $K_{n,n} \setminus P$ is connected in colour 1  and we are done by Claim~\ref{greatobservation}.
\end{proof}

\begin{figure}
	\captionsetup[subfigure]{labelformat=empty}
	\centering
	\begin{subfigure}[b]{0.3\textwidth}
		\begin{tikzpicture}
		\begin{customlegend}[legend cell align=left, 
		legend entries={ 
			Colour 1, Colour 2, Colour 3
		},
		legend style={at={(0,0)},font=\footnotesize}] 
		\addlegendimage{draw,  opacity=0.4,line width=3, loosely dashed}
		\addlegendimage{draw, decorate,decoration={shape backgrounds,shape=dart,shape size=2mm}}
		\addlegendimage{draw,  opacity=0.4,line width=3}
		\addlegendimage{draw,  opacity=0.4,line width=3}
		\end{customlegend}
		\end{tikzpicture}
	\end{subfigure}  
	\begin{subfigure}[b]{0.3\textwidth}
	\begin{tikzpicture}[thick,scale=0.9,
	every node/.style={}, tedge/.style={opacity=0.4,line width=3},
	gfit/.style={rectangle,draw,inner sep=0pt,text width=0.0cm},
	rfit/.style={rectangle,draw,inner sep=0pt,text width=0.0cm}
	]

	\foreach \i in {1,2,...,9}  \node (t\i) at (0.5*\i,3) {};
	\foreach \i in {1,2,...,9}  \node (b\i) at (0.5*\i,0) {};
	
	\node [gfit,fit=(t1) (t3),label=above:{$\bitop{C_1 \cap C_2}$}] {};
	\node [gfit,fit=(t4) (t6),label=above:{$\bitop{C_1 \cap C_3}$}] {};
	\node [gfit,fit=(t7) (t9),label=above:{$\bitop{C_2 \cap C_3}$}] {};

	\node [gfit,fit=(b1) (b3),label=below:{$\bibot{C_1 \cap C_2}$}] {};
	\node [gfit,fit=(b4) (b6),label=below:{$\bibot{C_1 \cap C_3}$}] {};
	\node [gfit,fit=(b7) (b9),label=below:{$\bibot{C_2 \cap C_3}$}] {};
	
	\foreach \b in {1,4} \foreach \t in {1,4}
	\path[draw, tedge,loosely dashed] (t\t) -- (b\b);

	\foreach \b in {2,8} \foreach \t in {2,8}
	\path[draw, decorate,decoration={shape backgrounds,shape=dart,shape size=2mm}] (t\t) -- (b\b);
	
	\foreach \b in {6,9} \foreach \t in {6,9}
	\path[draw, tedge] (t\t) -- (b\b);
	\end{tikzpicture}
	\end{subfigure} 	\qquad \qquad
	\caption{There are three colours and each vertex sees exactly two colours.}
	\label{fig:b}
\end{figure}
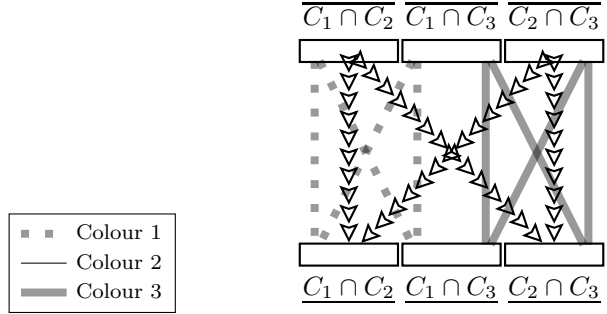

Claims~\ref{drei} and~\ref{aI} ensure that for the rest of the proof we can assume that the colouring is exactly as shown in Figure~\ref{fig:b} (with some of the sets possibly being empty). Now, let us see how Claim~\ref{onethenthree} implies that we easily find the three paths if one of the $C_i$ is complete bipartite in colour $i$.

\begin{claim}\label{nocompletebipcomp}
 For $i \in \{1,2,3\}$, the graph $C_i$ is not complete bipartite in colour $i$.
\end{claim}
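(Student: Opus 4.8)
The plan is to derive a contradiction with Claim~\ref{onethenthree}. Suppose for contradiction that $C_i$ is complete bipartite in colour $i$ for some $i$. By the symmetry of Figure~\ref{fig:b} under permuting the three colours (and since the hypothesis ``no partition into three monochromatic paths'' is colour-blind), we may assume $i=1$. By Claim~\ref{aI} every vertex of $\kn$ on top that sees colour~$1$ sees it together with exactly one of $2,3$, so the two sides of $C_1$ are $\bitop{C_1 \cap C_2}\cup\bitop{C_1 \cap C_3}$ and $\bibot{C_1 \cap C_2}\cup\bibot{C_1 \cap C_3}$, every edge of $\kn$ joining them has colour~$1$ by hypothesis, and $\bitop{\kn}\setminus\bigl(\bitop{C_1 \cap C_2}\cup\bitop{C_1 \cap C_3}\bigr)=\bitop{C_2 \cap C_3}$ (again by Claim~\ref{aI}).

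Next I would record two easy points. First, colour~$1$ occurs on an edge of $\kn$, hence is seen on both partition classes, so both sides of $C_1$ are non-empty. Second, the whole situation --- a simple $2$-local colouring of $\kn$ with three colours, no monochromatic $3$-path partition, arranged as in Figure~\ref{fig:b} --- is invariant under exchanging $\bitop{\kn}$ with $\bibot{\kn}$; in particular Claim~\ref{onethenthree} holds verbatim with $\bitop{}$ and $\bibot{}$ interchanged (its proof only invokes this situation). Consequently we may assume that the top side of $C_1$ is no larger than its bottom side.

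Now comes the short construction. Inside the colour-$1$ complete bipartite graph on the two sides of $C_1$, pick an even path $P=x_1y_1x_2y_2\cdots x_sy_s$, where $x_1,\dots,x_s$ enumerates the (smaller) top side of $C_1$, so $s\ge 1$, and $y_1,\dots,y_s$ are any $s$ distinct vertices of the bottom side of $C_1$; this is possible precisely because the bottom side is at least as large. Then $P$ is a monochromatic colour-$1$ path on the even number $2s$ of vertices which covers the entire top side of $C_1$, so $\bitop{\kn\setminus P}=\bitop{C_2 \cap C_3}\subseteq\bitop{C_2 \cap C_3}$. This contradicts Claim~\ref{onethenthree} applied with $\{i,j\}=\{2,3\}$, which finishes the proof.

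I expect no substantial obstacle: the only point needing care is the invariance of the configuration (and of all claims proved so far) under swapping the two classes of $\kn$, which licenses the reduction to ``top side of $C_1$ smaller'' --- alternatively one just runs the identical argument with $\bitop{}$ and $\bibot{}$ swapped in the remaining case. The remaining ingredient, that $K_{p,q}$ with $p\le q$ has a path on $2p$ vertices covering its $p$-side, is immediate.
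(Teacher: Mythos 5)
Your proof is correct and follows essentially the same route as the paper: since $C_i$ is complete bipartite in colour $i$, an even colour-$i$ path can cover its smaller side, leaving only $\bitop{C_j\cap C_k}$ (or $\bibot{C_j\cap C_k}$) in that partition class, whereupon Claim~\ref{onethenthree} (possibly with top and bottom swapped, exactly as the paper does) gives the contradiction. Your explicit construction of the path and the remark on top/bottom symmetry are just spelled-out versions of the paper's ``longest even path'' step.
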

\begin{proof}
Suppose the contrary and let $C_i$ contain only edges of colour $i$. 
Take out a longest even path of colour $i$ in $C_i$.
This leaves us either with only $\bibot{C_j \cap C_k}$ in the bottom partition class, or with only $\bitop{ C_j \cap C_k}$ in the top partition class (where~$j$ and $k$ are the other two colors). 
We may thus finish by applying Claim~\ref{onethenthree}, after possibly switching top and bottom parts. 
\end{proof}

\begin{claim}\label{aII}\label{all} 
 For $i \in \{1,2,3\}$, the graph $C_i$ is connected in colour $i$.
\end{claim}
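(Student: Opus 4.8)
The plan is to argue by contradiction. Suppose some $C_i$ — say $C_1$ — is not connected in colour~$1$, and derive a configuration forbidden by Claim~\ref{onethenthree} or Claim~\ref{greatobservation}. Using the structure from Claim~\ref{aI} (Figure~\ref{fig:b}), I write the six cells as $T_{jk}=\bitop{C_j\cap C_k}$ and $B_{jk}=\bibot{C_j\cap C_k}$; recall that the only colour available on $[T_{12},B_{13}]$ and on $[T_{13},B_{12}]$ is $1$ (so both are complete bipartite of colour~$1$), while $[T_{12},B_{12}]$ carries only colours in $\{1,2\}$ and $[T_{13},B_{13}]$ only colours in $\{1,3\}$. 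The first step is to pin down the structure forced by disconnectedness: I would check that if any of $T_{12},T_{13},B_{12},B_{13}$ were empty then $C_1$ would be connected (e.g.\ if $T_{12}=\emptyset$, then $T_{13}\neq\emptyset$ by simplicity, every vertex of $B_{12}\cup B_{13}$ sends a colour-$1$ edge to $T_{13}$, and $[T_{13},B_{12}]$ is complete), so all four are nonempty. Then $V(C_1)=T_{12}\cup T_{13}\cup B_{12}\cup B_{13}$, the colour-$1$ edges of $C_1$ contain the two vertex-disjoint complete bipartite graphs $[T_{12},B_{13}]$, $[T_{13},B_{12}]$, and disconnectedness forces $[T_{12},B_{12}]$ to be monochromatic of colour~$2$ and $[T_{13},B_{13}]$ monochromatic of colour~$3$. (Applying Claim~\ref{nocompletebipcomp} to $C_2$ and $C_3$ also gives $T_{23},B_{23}\neq\emptyset$, though this will not really be needed.)

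Now set $a=|T_{12}|$, $b=|T_{13}|$, $c=|T_{23}|$, $d=|B_{12}|$, $e=|B_{13}|$, $f=|B_{23}|$, so that $a+b+c=d+e+f=n$ and $a,b,d,e\ge 1$. I split into three cases. If $e\ge b+c$: since $[T_{13}\cup T_{23},B_{13}]$ is complete bipartite of colour~$3$, take a Hamilton path $P$ of the complete bipartite colour-$3$ graph between $T_{13}\cup T_{23}$ and any $b+c$ vertices of $B_{13}$; then $P$ is an \emph{even} monochromatic path with $\bitop{\kn\setminus P}=T_{12}=\bitop{C_1\cap C_2}$, contradicting Claim~\ref{onethenthree}. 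Symmetrically, if $d\ge a+c$, an even colour-$2$ path between $T_{12}\cup T_{23}$ and $a+c$ vertices of $B_{12}$ leaves $\bitop{\kn\setminus P}=T_{13}=\bitop{C_1\cap C_3}$, again contradicting Claim~\ref{onethenthree}. In the remaining case $e<b+c$ and $d<a+c$, which (using $a+b+c=d+e+f$) is the same as $a<d+f$ and $d<a+c$: as $[T_{12},B_{12}\cup B_{23}]$ and $[T_{12}\cup T_{23},B_{12}]$ are complete bipartite of colour~$2$, I build an even colour-$2$ path $P$ covering $T_{12}\cup B_{12}$ together with $|a-d|$ extra vertices, taken from $B_{23}$ if $a>d$ (possible because $a-d<f$), from $T_{23}$ if $a<d$ (because $d-a<c$), or none if $a=d$. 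Then $\kn\setminus P\subseteq V(C_3)$, and since $T_{13},B_{13}$ are nonempty and complete in colour~$3$ to all of $B_{13}\cup B_{23}$ and $T_{13}\cup T_{23}$ respectively, $\kn\setminus P$ is connected in colour~$3$, contradicting Claim~\ref{greatobservation}. In each case we reach a contradiction, so every $C_i$ is connected in colour~$i$.

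I expect the bulk of the work to be the first step: extracting the rigid structure (all four of $T_{12},T_{13},B_{12},B_{13}$ nonempty, and the two ``diagonal'' bicliques forced monochromatic) from nothing but disconnectedness. After that the three cases are short, and the only care needed is that every path used lies inside one complete bipartite colour class and can be chosen \emph{even} — which is precisely why the case split is along $e\ge b+c$, $d\ge a+c$, and the failure of both.
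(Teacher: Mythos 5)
Your proof is correct and follows essentially the same route as the paper's: disconnectedness of $C_i$ in colour $i$ forces the two ``straight'' pairs $C_i\cap C_j$, $C_i\cap C_k$ to carry no colour-$i$ edges, and then a case analysis on the sizes of the cells produces an even monochromatic path to which Claim~\ref{onethenthree} or Claim~\ref{greatobservation} applies, exactly as in the paper (your three explicit cases correspond to the paper's WLOG reduction plus two cases). One pedantic remark: in your first step, $T_{13}\neq\emptyset$ follows because colour $1$ must appear at some top vertex once $T_{12}=\emptyset$, rather than from simplicity alone, which only gives $T_{13}\cup B_{13}\neq\emptyset$; this is trivial to fix.
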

\begin{proof}
	For contradiction, suppose that $C_3$ is not connected in colour 3 (the other colours are symmetric). Then there are two edges $e,f$ of colour $3$ belonging to  $C_3$ that are not joined by a path of colour $3$. First assume we can choose $e$ in $E(C_2 \cap C_3)$. Since all edges between  $C_1 \cap C_3$ and $C_2 \cap C_3$ have colour 3, we get $f\in E(C_2 \cap C_3)$, and $C_1 \cap C_3$ has no vertices. But this contradicts our assumption that the colouring is simple. Therefore, $C_2 \cap C_3$ and, by symmetry, $C_1 \cap C_3$ contain no edges of colour~3.
	
	By symmetry (between the top and bottom partition) we can assume that $|\bitop{C_1 \cap C_3}| \geq |\bibot{C_1 \cap C_3}|$.
	Further, we have $|\bitop{C_1 \cap C_3}| < |\bibot{C_1 \cap C_2}| + |\bibot{C_1 \cap C_3}|$, since otherwise we could find an even  path of colour 1 that covers all of $\bibot{C_1 \cap C_2} \cup \bibot{C_1 \cap C_3}$ and use Claim~\ref{onethenthree}. 
	So we can choose an even path $P$ of colour 1,  alternating between $\bitop{C_1 \cap C_3}$ and  $\bibot{C_1 \cap C_2} \cup \bibot{C_1 \cap C_3}$, that contains both $\bitop{C_1 \cap C_3}$ and $\bibot{C_1 \cap C_3}$. 
	Thus $K_{n,n} \setminus P$ is connected in colour 2 and Claim~\ref{greatobservation} applies.
\end{proof}

Let us now show that for pairwise distinct $i,j,k \in \{1,2,3\}$ we have
\begin{equation}\label{equ:maya-1}
\text{at least one of $\bibot{C_i\cap C_j}, \bitop{C_i\cap C_k}$ is not empty.}
\end{equation}
To see this,  note that the edges between $\bitop{C_i \cap C_j}$ and $\bibot{C_i \cap C_k}$ are of colour $i$. 
Thus if~\eqref{equ:maya-1} does not hold, we can find a colour $i$ (possibly trivial) path~$P$ that covers one of these two sets. 
Hence either in the top or in the bottom part of $K_{n,n}$, the path $P$ covers all but $C_j\cap C_k$.
We can thus finish with Claim~\ref{onethenthree}.

Together with the fact that every colour must see both top and bottom class,~\eqref{equ:maya-1} immediately implies that for pairwise distinct $i,j,k \in \{1,2,3\}$ we have
\begin{equation}\label{equ:maya-xx}
\text{at least one of $C_i\cap C_j, C_i\cap C_k$ meets both $\bibot{K_{n,n}}$ and $\bitop{K_{n,n}}$.}
\end{equation}

So, of the three bipartite graphs $C_i\cap C_j$, two have non-empty tops and bottoms. Hence, after possibly swapping colours, we know that
 the four sets $\bibot{C_1\cap C_i}$, $\bitop{C_1\cap C_i}$, $i = 2,3$, are non-empty.
Observe that after possibly swapping colours $2$ and $3$, and/or switching partition classes of $\kn$, we have one of the following situations:
\begin{enumerate}[(i)]
\item $|\bitop{C_1\cap C_2}| \geq |\bibot{C_1\cap C_3}|$ and $|\bibot{C_1\cap C_2}| \geq |\bitop{C_1\cap C_3}|$, or
\item $|\bitop{C_1\cap C_2}| \geq |\bibot{C_1\cap C_3}|$ and $|\bibot{C_1\cap C_2}| \leq |\bitop{C_1\cap C_3}|$.
\end{enumerate}

In either of these situations, note that as all involved sets are non-empty, by Claim~\ref{all}
there is an edge $e_1$ of  colour $1$ in $E(C_1\cap C_2)\cup E(C_1 \cap C_3)$.
So if we are in situation~(ii), we can find an even  path of colour 1 covering all of $\bibot{C_1\cap C_3}\cup\bibot{C_1\cap C_2}$. Now Claim~\ref{onethenthree} applies, and we are done. So assume from now on we are in situation~(i).

 Similarly as above, by~\eqref{equ:maya-xx}, there is an edge $e_2$ of colour 2 in $E(C_3\cap C_2)\cup E(C_1\cap C_2)$. By Claim~\ref{nocompletebipcomp},  $C_3$ is not complete bipartite in colour 3. So we can assume that at least one of $e_1$ or $e_2$ is chosen in~$C_3$ and hence the two edges are not incident.

Extend $e_1$  to an even colour 1 path $P$ covering all of $C_1\cap C_3$, using (apart from $e_1$) only edges from
$[\bibot {C_1\cap C_3},\bitop {C_1\cap C_2}]$ and from $[\bibot {C_1\cap C_2},\bitop {C_1\cap C_3}]$, while avoiding the endvertices of $e_2$, if possible.
If we had to use one of the endvertices of $e_2$ in $P$, then $P$ either covers 
all of $\bibot{C_1\cap C_2}$ or all of $\bitop{C_1\cap C_2}$. In either case we may apply Claim~\ref{onethenthree}, and are done. 
On the other hand, if we could avoid both endvertices of $e_2$ for $P$, then Claim~\ref{greatobservation} applies and we are done.
This finishes the proof of Theorem~\ref{thm:bipartite-2-local-paths-partition}.

\subsection{Finding long paths}

In this subsection we prove Theorem~\ref{thm:ramsey}. We will use the following theorem, which resolves the problem for the case of of 2-colourings.

\begin{theorem}[\cite{FS75,GL73}]\label{thm:faudree-schelp}
Every $2$-edge-coloured $K_{p+q-1,p+q-1}$ contains a colour~1 path of length $2p$ or a  colour 2 path of length $2q$. 
\end{theorem}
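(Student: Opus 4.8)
The plan is to deduce the statement from Pokrovskiy's partition theorem (Theorem~\ref{thm:pokrovskiy}), which is already available to us, rather than re-running a rotation argument. Write $N=p+q-1$, call colour~$1$ red and colour~$2$ blue, and read ``path of length $2p$'' as ``path on $2p$ vertices'' (so that $p=q=m$ recovers $P_{2m}$, matching the use of this theorem in Theorem~\ref{thm:ramsey}). Suppose for contradiction that the $2$-colouring of $K_{N,N}$ has no red path on $2p$ vertices and no blue path on $2q$ vertices; by the symmetry swapping the two colours together with $p\leftrightarrow q$ I may assume $p\ge q$. Applying Theorem~\ref{thm:pokrovskiy} to $K_{N,N}$ leaves exactly two cases: either $V(K_{N,N})$ partitions into a red path $P_R$ and a blue path $P_B$ of distinct colours, or the colouring is split. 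I treat these separately.

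\emph{Case 1 (partition).} Since $P_R$ and $P_B$ are vertex-disjoint and cover all $2N$ vertices, $|V(P_R)|+|V(P_B)|=2N=2p+2q-2$. Our assumption gives $|V(P_R)|\le 2p-1$ and $|V(P_B)|\le 2q-1$, so both bounds are tight: $|V(P_R)|=2p-1$ and $|V(P_B)|=2q-1$. Each path thus has an odd number of vertices and hence both endpoints on the same side of the bipartition; since a path on $2p-1$ vertices occupies $p$ vertices of the class containing its endpoints and $p-1$ of the other, and each class has exactly $N=p+q-1$ vertices, a quick count forces $P_R$ to have both endpoints in one class, say $X$, and $P_B$ both endpoints in the other class $Y$. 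Now pick an endpoint $x^\ast\in X$ of $P_R$ and an endpoint $y^\ast\in Y$ of $P_B$; these are distinct with $y^\ast\notin V(P_R)$ and $x^\ast\notin V(P_B)$, and the edge $x^\ast y^\ast$ exists. If it is red, appending $y^\ast$ to $P_R$ yields a red path on $2p$ vertices; if it is blue, appending $x^\ast$ to $P_B$ yields a blue path on $2q$ vertices. Either outcome contradicts our assumption.

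\emph{Case 2 (split).} Here $X=X_1\cup X_2$, $Y=Y_1\cup Y_2$, with $[X_1,Y_1]\cup[X_2,Y_2]$ red and $[X_1,Y_2]\cup[X_2,Y_1]$ blue. Writing $x_i=|X_i|$, $y_i=|Y_i|$, we have $x_1+x_2=y_1+y_2=N$, the red graph is the disjoint union of $K_{x_1,y_1}$ and $K_{x_2,y_2}$, and the blue graph is the disjoint union of $K_{x_1,y_2}$ and $K_{x_2,y_1}$. Since $K_{a,b}$ contains a path on $2m$ vertices if and only if $a,b\ge m$, the absence of a red $P_{2p}$ and a blue $P_{2q}$ is precisely the system $\min(x_1,y_1),\min(x_2,y_2)\le p-1$ and $\min(x_1,y_2),\min(x_2,y_1)\le q-1$. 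I claim this system has no solution with $x_1+x_2=y_1+y_2=p+q-1$. Indeed, if $x_1\ge p$ then (using $p\ge q$) the red constraint forces $y_1\le p-1$ and the blue constraint forces $y_2\le q-1$, so $y_1+y_2\le p+q-2<N$; hence $x_1\le p-1$, and the analogous arguments give $x_2,y_1,y_2\le p-1$ as well. But then the blue constraint $\min(x_1,y_2)\le q-1$ gives either $x_1\le q-1$, whence $x_2=N-x_1\ge p$, or $y_2\le q-1$, whence $y_1=N-y_2\ge p$ — both contradicting the bounds just derived. This contradiction finishes Case~2 and hence the proof.

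The main obstacle is Case~2: one must check that the split structure cannot balance its four block sizes so as to keep all four monochromatic blocks below the path thresholds when $N$ is as large as $p+q-1$, whereas it can at $N=p+q-2$ via the extremal split colouring with blocks $K_{p-1,p-1}$ and $K_{q-1,q-1}$. This is a finite but slightly fiddly arithmetic case analysis, and the clean route is to first force all four block sizes down to $\le p-1$ (each alternative $x_i\ge p$ or $y_i\ge p$ over-fills a class through the remaining constraints), after which a single blue constraint is already violated. By contrast, Case~1 is essentially immediate once Theorem~\ref{thm:pokrovskiy} is invoked, the only genuine content being the parity-and-counting step that pins the endpoints of the two paths to opposite classes, so that one edge between them extends a path to the forbidden length.
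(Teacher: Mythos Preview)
Your proof is correct. Note, however, that the paper does not give its own proof of this statement: Theorem~\ref{thm:faudree-schelp} is quoted as a classical result of Faudree--Schelp and Gy\'arf\'as--Lehel and used as a black box. So there is nothing in the paper to compare your argument against directly.

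That said, your derivation is a genuinely different route from the original 1970s proofs, which proceed by direct extremal/rotation arguments on paths in bipartite graphs. You instead deduce the result from Theorem~\ref{thm:pokrovskiy}. This is logically sound (Pokrovskiy's theorem does not rely on Theorem~\ref{thm:faudree-schelp}), and both cases go through cleanly: in Case~1 the parity/counting argument pinning the two odd paths' endpoints to opposite classes is exactly right, and in Case~2 the arithmetic showing that all four block sizes must be $\le p-1$, followed by one blue constraint forcing a block to size $\ge p$, is correct. The trade-off is that you are invoking a considerably more recent (and arguably deeper) structural theorem to recover an older Ramsey-type bound; the gain is brevity and the avoidance of any path-rotation machinery. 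One minor point worth making explicit in a final write-up: the degenerate case where one of the two Pokrovskiy paths is empty is harmless (it forces the other path to have $2N\ge 2p$ vertices when $q\ge1$), but it is good practice to say so rather than leave it to the reader.
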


 As in the last section, $C_i$ denotes the subgraph induced by the vertices that have an edge of colour $i$. Recall that the length of a path is the number of its vertices.

\begin{lemma}\label{lem:intersection-cover-path}
	Let $K_{2m-1,2m-1}$ be  2-locally coloured with colours $1,2,3$. Then for distinct colours $i,j$ there is a monochromatic path of length at least $$\min\{2m,2\max(|\bitop{C_i \cap C_j}|,|\bibot{C_i \cap C_j}|)\}.$$
\end{lemma}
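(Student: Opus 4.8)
The plan is to zoom in on the part of $K_{2m-1,2m-1}$ that is relevant to $C_i\cap C_j$, observe that it is essentially $2$-coloured, and then analyse it by hand; throughout, $V$ denotes the bottom class of $K_{2m-1,2m-1}$. By the symmetry between the two classes of $K_{2m-1,2m-1}$ and between $i$ and $j$, I may assume $i=1$, $j=2$ and $|\bitop{C_1\cap C_2}|\ge|\bibot{C_1\cap C_2}|$; set $W:=\bitop{C_1\cap C_2}$ and $t:=|W|$. The goal is a monochromatic path on at least $L:=\min\{2m,2t\}=2\min\{m,t\}$ vertices, and one may assume $t\ge 1$.

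First I would note that $[W,V]$ is a $2$-coloured copy of $K_{t,2m-1}$: a vertex of $W$ meets edges of colours $1$ and $2$, hence, by $2$-locality, none of colour $3$. Next I would split $V=V_1\cup V_2\cup V_{12}$, where $V_c$ ($c\in\{1,2\}$) is the set of bottom vertices all of whose edges to $W$ have colour $c$, and $V_{12}$ is the rest. A vertex of $V_{12}$ sees both colours $1$ and $2$ already on edges into $W$, so it lies in $C_1\cap C_2$; hence $V_{12}\subseteq\bibot{C_1\cap C_2}$, and by our choice $|V_{12}|\le|\bibot{C_1\cap C_2}|\le t$, so that $|V_1|+|V_2|\ge(2m-1)-t$. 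If $|V_c|\ge\min\{m,t\}$ for some $c$, I am done: all edges between $W$ and $V_c$ have colour $c$, so $[W,V_c]$ contains a monochromatic $K_{\min\{m,t\},\min\{m,t\}}$ (using $\min\{m,t\}\le t$ vertices of $W$), which has a Hamilton path on $L$ vertices.

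So I am left with the case $|V_1|,|V_2|<\min\{m,t\}$; since $V_{12}=\emptyset$ would force $\max\{|V_1|,|V_2|\}\ge m\ge\min\{m,t\}$, in this case $V_{12}\ne\emptyset$, and $|V_{12}|\le t$ together with $|V_1|+|V_2|\ge 2m-1-t$ makes $V_{12}$ comparatively large. This is the crux: the bichromatic bottom vertices $V_{12}$ must be threaded into a long monochromatic path, and here the decisive feature — absent in a general $2$-colouring — is the presence of the complete monochromatic blocks $[W,V_1]$ (colour $1$) and $[W,V_2]$ (colour $2$). I would argue via the dichotomy given by $W^*:=\{w\in W: w$ sends a colour-$2$ edge into $V_{12}\}$. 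If $W\setminus W^*$ is large, then $[W\setminus W^*,V_{12}]$ is a large complete colour-$1$ block; glued to $[W,V_1]$ along $W\setminus W^*$, it yields, after first threading $W$ through $V_1$ and then continuing through $V_{12}$ (with one vertex of $V_1$, adjacent in colour $1$ to all of $W$, acting as a hub to join the pieces), a colour-$1$ path on $L$ vertices. If instead $W^*$ is large, then the colour-$2$ bipartite graph between $W$ and $V_{12}$ has a large matching, and one chains these matching edges into a colour-$2$ path using vertices of $V_2$ — each adjacent in colour $2$ to all of $W$ — as connectors. In either branch, the inequality $|V_{12}|\le t=|W|$, i.e.\ $|V_1|+|V_2|\ge 2m-1-t$, is exactly what supplies enough bottom vertices to reach $L$.

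The step I expect to be the main obstacle is this last case: fixing the precise threshold in the $W^*$-dichotomy and checking, in each branch, that the alternating construction never stalls before length $L$ — which requires care about which vertices of $W$ occupy which positions of the path, and a separate (easy) treatment of the extreme configuration in which all colour-$1$ edges leaving $V_{12}$ hit a single vertex $w^*\in W$, since then $[W\setminus\{w^*\},V_{12}]$ is an essentially complete colour-$2$ block, which is in fact the most favourable situation.
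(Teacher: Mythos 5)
Your set-up is fine as far as it goes: since every vertex of $W=\bitop{C_1\cap C_2}$ sees only colours $1$ and $2$, the graph $[W,V]$ is indeed $2$-coloured, the partition $V=V_1\cup V_2\cup V_{12}$ with $V_{12}\subseteq\bibot{C_1\cap C_2}$ is correct, and the case $|V_c|\ge\min\{m,t\}$ is handled correctly by a Hamilton path of a complete monochromatic balanced bipartite graph. But the remaining case is the entire content of the lemma, and your argument for it has a genuine gap. The colour-$2$ branch of your $W^*$-dichotomy rests on the implication ``$W^*$ large $\Rightarrow$ the colour-$2$ graph between $W$ and $V_{12}$ has a large matching'', which is false: all the colour-$2$ edges counted by $W^*$ may be incident to very few vertices of $V_{12}$ (note $|V_{12}|$ can be as small as $1$ while $|W^*|=t$), so no long colour-$2$ path can thread through $V_{12}$ there. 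The colour-$1$ branch likewise needs a quantitative balance between $|W\setminus W^*|$ and the number of $V_{12}$-vertices that must be absorbed, and no threshold making both branches work simultaneously is exhibited; you yourself flag this as ``the main obstacle''. What you are attempting to prove by hand in this case is, in essence, the $2$-colour bipartite Ramsey theorem for paths, so the sketch cannot be regarded as a proof until that dichotomy is actually established.

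For comparison, the paper avoids this difficulty by quoting exactly that result (Theorem~\ref{thm:faudree-schelp}, due to Gy\'arf\'as--Lehel and Faudree--Schelp): after observing that by $2$-locality $\bibot{C_k\setminus(C_i\cup C_j)}=\emptyset$, it applies Theorem~\ref{thm:faudree-schelp} inside a balanced subgraph of $C_i\cap C_j$ with $p=m-|\bibot{C_i\setminus C_j}|$ and $q=m-|\bibot{C_j\setminus C_i}|$ (possible since $p+q-1=|\bibot{C_i\cap C_j}|\le|\bitop{C_i\cap C_j}|$), and then extends the resulting colour-$i$ path through the bipartite graph $[\bitop{C_i\cap C_j},\bibot{C_i\setminus C_j}]$, which is entirely of colour~$i$, to reach length $2m$ or $2|\bitop{C_i\cap C_j}|$. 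To repair your proof, either invoke Theorem~\ref{thm:faudree-schelp} (or an equivalent connected-matching statement) at the point where you currently rely on the $W^*$-dichotomy, or replace that dichotomy by a complete argument; as written, the crucial case is not proved.
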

\begin{proof}
	By symmetry, we can assume that $|\bitop{C_i \cap C_j}| \geq |\bibot{C_i \cap C_j}|$. Moreover, we can assume that $\bitop{C_i \cap C_j}\neq\emptyset$, as otherwise there is nothing to prove. Then by 2-locality, 
	\begin{equation}\label{kkk}
		\bibot{C_k \setminus (C_i \cup C_j)} = \emptyset,
	\end{equation}
	where $k$ denotes the third colour.
	
	We apply Theorem~\ref{thm:faudree-schelp} to a balanced subgraph of $C_i \cap C_j$ with $p = m - |\bibot{C_i \setminus C_j}|$ and $q = m - |\bibot{C_j \setminus C_i}|$. 	
	For this, note that we have $$p+q-1  = 2m-1-|\bibot{C_i \setminus C_j}| -|\bibot{C_j \setminus C_i}|  \overset{\eqref{kkk}}{=} |\bibot{C_i \cap C_j}|\leq |\bitop{C_i \cap C_j}|.$$
	By symmetry between $i$ and $j$ we can assume that the outcome of Theorem~\ref{thm:faudree-schelp} is a colour $i$ path $P$ of length $2(m - |\bibot{C_i \setminus C_j}|)$.
	Let $R \subset [\bitop{C_i \cap C_j}\setminus \bitop{P}, \bibot{C_i \setminus C_j}]$ be a path of colour $i$ and length $$r = \min(2|\bitop{C_i \cap C_j} \setminus \bitop{P}|,2|\bibot{C_i \setminus C_j}|).$$
	If $r = 2|\bibot{C_i \setminus C_j}|$, then we can join $P$ and $R$ to a path of length of $2m$.
	Otherwise $r = 2|\bitop{C_i \cap C_j} \setminus \bitop{P}|$ and we can join $P$ and $R$ to a path of length of $2|\bitop{C_i \cap C_j}|$.
\end{proof}

Now let us prove Theorem~\ref{thm:ramsey} by contradiction. To this end, assume that $K_{2\m-1,2\m-1}$ is coloured 2-locally and has no monochromatic path on $2\m$ vertices. Since we are not interested in the actual colours of the path we can assume the colouring to be simple, as in the previous subsection. Furthermore Theorem~\ref{thm:faudree-schelp} implies that there are at least three colours. 

We now apply Lemma~\ref{3possiblecolourings}. 
The four colour case of Lemma~\ref{3possiblecolourings} is quickly resolved: Without loss of generality suppose that $|\bitop{C_1 \cap C_2}| \geq \m$. By symmetry between colours $1$ and $2$, we can assume that $|\bibot{C_1 \cap C_3} \cup \bibot{C_1 \cap C_4}| \geq \m$. Thus we easily find a colour 1 path of length $2m$ alternating between these sets. This proves:

\begin{claim}\label{drei2}
	The total number of colours is three.
\end{claim}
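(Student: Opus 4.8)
The plan is to invoke Lemma~\ref{3possiblecolourings}, which already bounds the total number of colours by four, and then rule out the case of exactly four colours; combined with the fact that Theorem~\ref{thm:faudree-schelp} forces at least three colours, this yields the claim. So suppose for contradiction that colours $1,2,3,4$ all appear. By Lemma~\ref{3possiblecolourings} we may assume the colouring has the structure of Figure~\ref{fig:c}, that is,
\[
\bitop{\kn}= \bitop{C_1 \cap C_2} \cup \bitop{C_3 \cap C_4}, \qquad \bibot{\kn}= \bibot{C_1 \cap C_3} \cup \bibot{C_1 \cap C_4} \cup \bibot{C_2 \cap C_3} \cup \bibot{C_2 \cap C_4}.
\]
Moreover each of these two unions is a partition into pairwise disjoint sets: a vertex of $\bibot{C_i\cap C_j}$ is incident to edges of colours $i$ and $j$, so by $2$-locality it sees exactly these colours and hence lies in no other piece (and symmetrically on top).

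First I would apply pigeonhole to the top class. Since $|\bitop{\kn}|=2m-1$ and $\bitop{\kn}=\bitop{C_1\cap C_2}\cup\bitop{C_3\cap C_4}$, one of the two pieces has at least $m$ vertices; after swapping the colour pair $\{1,2\}$ with $\{3,4\}$ if needed (permitted by the lemma), assume $|\bitop{C_1\cap C_2}|\ge m$. Next I would do the same in the bottom class, where the vertices seeing colour $1$ are exactly $\bibot{C_1\cap C_3}\cup\bibot{C_1\cap C_4}$ and those seeing colour $2$ are exactly $\bibot{C_2\cap C_3}\cup\bibot{C_2\cap C_4}$; these two sets partition $\bibot{\kn}$, so one of them has at least $m$ vertices. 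Swapping colours $1$ and $2$ if necessary — a swap that fixes the set $\bitop{C_1\cap C_2}$ and hence preserves $|\bitop{C_1\cap C_2}|\ge m$ — assume $|\bibot{C_1\cap C_3}\cup\bibot{C_1\cap C_4}|\ge m$.

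The crux is then that every edge of $\kn$ joining $\bitop{C_1\cap C_2}$ to $\bibot{C_1\cap C_3}\cup\bibot{C_1\cap C_4}$ has colour $1$: its top endpoint sees only colours $1$ and $2$, while its bottom endpoint sees only colour $1$ and one of $3,4$, so the sole common possibility is colour $1$. Hence the bipartite graph between these two sets is complete and monochromatic in colour $1$, with both classes of size at least $m$; choosing $m$ vertices on each side and alternating between them produces a colour-$1$ path on $2m$ vertices, contradicting the standing assumption that no monochromatic path on $2m$ vertices exists. Thus the four-colour case cannot occur, so there are at most three colours, and together with Theorem~\ref{thm:faudree-schelp} exactly three.

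I do not expect any serious obstacle. The only points requiring care are verifying that the pieces in Lemma~\ref{3possiblecolourings} genuinely partition the top and bottom classes (so both pigeonhole steps are legitimate and the parity count $m+(m-1)=2m-1$ applies) and checking that each colour-swap symmetry invoked really preserves the inequality obtained at the previous step.
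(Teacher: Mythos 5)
Your proposal is correct and follows essentially the same route as the paper: apply Lemma~\ref{3possiblecolourings}, use pigeonhole on the top class to get $|\bitop{C_1\cap C_2}|\ge m$, use symmetry between colours $1$ and $2$ to get $|\bibot{C_1\cap C_3}\cup\bibot{C_1\cap C_4}|\ge m$, and then build a colour-$1$ path on $2m$ vertices alternating between these two sets, contradicting the standing assumption. You merely spell out the details (disjointness of the pieces and the fact that all edges between the two chosen sets have colour $1$) that the paper leaves implicit.
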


We can now exclude vertices that see only one colour.

\begin{claim}\label{only-two-colours}
	Each vertex sees two colours.
\end{claim}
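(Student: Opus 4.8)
Each vertex of $K_{2m-1,2m-1}$ (2-locally coloured, simple, with exactly three colours $1,2,3$, and no monochromatic $P_{2m}$) sees exactly two colours.

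The plan is to argue by contradiction: suppose some vertex sees only one colour. By symmetry assume a vertex $v\in\bitop{\kn}$ sees only colour $1$. Then by $2$-locality no vertex of $\bibot\kn$ can see both colours $2$ and $3$ (such a vertex would be joined to $v$ by an edge whose colour is forced to be $1$, but then that vertex sees $1,2,3$). Hence $\bibot{C_2\cap C_3}=\emptyset$, and since the colouring is simple, $\bitop{C_2\cap C_3}\neq\emptyset$. Consequently $\bibot\kn\subseteq\bibot{(C_1\cap C_2)\cup(C_1\cap C_3)}$, and by simplicity again the four sets $\bitop{C_2\cap C_3}$, $\bibot{C_1\cap C_2}$, $\bibot{C_1\cap C_3}$ are all nonempty (and $\bitop\kn\subseteq\bitop{(C_1\cap C_2)\cup(C_1\cap C_3)}$, with $\bitop{C_1\cap C_2}$ possibly empty if $v$ is the only single-colour vertex — but in any case $\bitop\kn$ is covered by $C_1$).

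The idea is now to produce a long colour-$1$ path. Note $|\bibot\kn|=2m-1$, so $|\bibot{C_1\cap C_2}|+|\bibot{C_1\cap C_3}|\geq 2m-1$, hence $\max(|\bibot{C_1\cap C_2}|,|\bibot{C_1\cap C_3}|)\geq m$. Since $\bitop\kn$ is entirely covered by $C_1$, in particular every vertex of $\bitop\kn$ is an endpoint of a colour-$1$ edge; moreover the edges between $\bitop{C_1\cap C_2}\cup\bitop{C_1\cap C_3}$ and $\bibot{C_1\cap C_2}\cup\bibot{C_1\cap C_3}$ that are incident only to vertices forced into colour $1$ — concretely, all edges between $\bitop{C_2\cap C_3}$ and $\bibot{C_1\cap C_2}\cup\bibot{C_1\cap C_3}$ are colour $1$ (a $\bitop{C_2\cap C_3}$ vertex already sees $2,3$, so any further edge is colour $1$). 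Since $\bitop{C_2\cap C_3}\neq\emptyset$ and $|\bitop\kn|=2m-1\geq m$ while one of $\bibot{C_1\cap C_2},\bibot{C_1\cap C_3}$ has size $\geq m$, I can greedily build an even colour-$1$ path $P$ alternating between $\bitop\kn$ and the large side, of length $2m$ — unless $\bitop{C_2\cap C_3}$ is too small to reach all the required top vertices. The cleaner route, mirroring Claim~\ref{aI} in the path-partition section, is: if $|\bitop{C_2\cap C_3}|$ is large enough relative to $|\bibot{C_1\cap C_3}|$ we apply Lemma~\ref{lem:intersection-cover-path} to the pair $\{1,3\}$ (or the analogous statement) to get a monochromatic path of length $\min\{2m,2\max(|\bitop{C_1\cap C_3}|,|\bibot{C_1\cap C_3}|)\}$ and combine it with a colour-$1$ path through $\bitop{C_2\cap C_3}$; otherwise $\bitop{C_2\cap C_3}$ is small, so $\bitop{C_1\cap C_3}$ together with $\bitop{C_1\cap C_2}$ is essentially all of $\bitop\kn$, and one uses Lemma~\ref{lem:intersection-cover-path} on whichever pair $\{1,2\}$ or $\{1,3\}$ has a side of size $\geq m$ to directly obtain a colour-$1$ path on $2m$ vertices, a contradiction.

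The main obstacle is the bookkeeping in the "otherwise" case: I must verify that after removing the contribution of $\bitop{C_2\cap C_3}$, one of $|\bitop{C_1\cap C_2}|,|\bibot{C_1\cap C_2}|,|\bitop{C_1\cap C_3}|,|\bibot{C_1\cap C_3}|$ is forced to be $\geq m$, so that Lemma~\ref{lem:intersection-cover-path} delivers a path of length $2m$; this uses $|\bitop\kn|=|\bibot\kn|=2m-1$ and the fact that these sets partition the respective classes (with $\bibot{C_2\cap C_3}=\emptyset$). Once the size inequality is pinned down, each subcase is a one- or two-line application of Theorem~\ref{thm:faudree-schelp} via Lemma~\ref{lem:intersection-cover-path}, exactly parallel to Claim~\ref{aI}. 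I expect the whole argument to take about a paragraph, with the bipartition counting being the only place that needs care.
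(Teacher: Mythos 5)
Your argument is correct and is essentially the paper's: 2-locality gives $\bibot{C_2\cap C_3}=\emptyset$, simplicity gives $\bitop{C_2\cap C_3}\neq\emptyset$, hence $\bibot{\kn}$ is covered by $\bibot{C_1\cap C_2}\cup\bibot{C_1\cap C_3}$ and one of these two sets has size at least $\m$. At that point a single application of Lemma~\ref{lem:intersection-cover-path} (to the pair $\{1,2\}$ or $\{1,3\}$ with the large bottom part) already produces a monochromatic path on $2\m$ vertices, so the case distinction on $|\bitop{C_2\cap C_3}|$ and the ``bookkeeping'' you flag as the main obstacle are unnecessary---this is exactly how the paper concludes.
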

\begin{proof}
	Suppose that there is a vertex in $\bitop{K_{2\m-1,2\m-1}}$ that sees only colour $1$, say. 
	Then by 2-locality, $\bibot{C_2 \cap C_3} = \emptyset$.
	Since the colouring is simple we know that $\bitop{C_2 \cap C_3} \neq \emptyset$.  
	Therefore $\bibot{K_{2\m -1,2\m -1} }\subset \bibot{(C_1 \cap C_2) \cup (C_1 \cap C_3)}$.		
Since one of $\bibot{C_1 \cap C_2}$ and $\bibot{C_1 \cap C_3}$ must have size at least $\m$, we are done by Lemma~\ref{lem:intersection-cover-path}.
\end{proof} 

Put together, Claims~\ref{drei2} and~\ref{only-two-colours} allow us to assume that the colouring is as shown in Figure~\ref{fig:b}. The next claim follows instantly from Lemma~\ref{lem:intersection-cover-path}.

\begin{claim}\label{cla:eine-seite-kleiner-n/2}
For distinct colours $i, j$ we have $\max(|\bitop{C_i \cap C_j}|,|\bibot{C_i \cap C_j}|) < \m.$
\end{claim}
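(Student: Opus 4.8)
The plan is to obtain this directly as a corollary of Lemma~\ref{lem:intersection-cover-path}, which has already done all of the real work. Fix distinct colours $i,j$ and argue by contradiction: suppose that $\max(|\bitop{C_i \cap C_j}|,|\bibot{C_i \cap C_j}|) \geq \m$. By Claims~\ref{drei2} and~\ref{only-two-colours} we are in the situation of Figure~\ref{fig:b}, so in particular $K_{2\m-1,2\m-1}$ carries a 2-local colouring with colours $1,2,3$ and Lemma~\ref{lem:intersection-cover-path} applies to the pair $i,j$. It hands us a monochromatic path of length at least
\[
\min\{2\m,\,2\max(|\bitop{C_i \cap C_j}|,|\bibot{C_i \cap C_j}|)\} = 2\m ,
\]
where the equality uses our assumption. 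This contradicts the standing hypothesis (made at the start of the proof of Theorem~\ref{thm:ramsey}) that $K_{2\m-1,2\m-1}$ contains no monochromatic path on $2\m$ vertices, and the claim follows.

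There is essentially no obstacle here: the substantive content — splitting the ground set of $C_i\cap C_j$ according to $C_i\setminus C_j$ and $C_j\setminus C_i$, invoking the bipartite Ramsey result of Theorem~\ref{thm:faudree-schelp}, and stitching the resulting path together with an extra alternating segment $R$ — is precisely what Lemma~\ref{lem:intersection-cover-path} already packages. The only point worth a sentence is checking that its hypotheses hold, i.e.\ that after Claims~\ref{drei2} and~\ref{only-two-colours} the colouring indeed uses exactly three colours and remains $2$-local on the relevant subgraph, which is immediate. Hence the claim can be stated as following instantly from Lemma~\ref{lem:intersection-cover-path}, and the contradiction above makes that implication explicit.
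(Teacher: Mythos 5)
Your proof is correct and matches the paper's approach exactly: the paper also obtains this claim as an immediate consequence of Lemma~\ref{lem:intersection-cover-path} together with the standing assumption that there is no monochromatic path on $2\m$ vertices (it simply states that the claim ``follows instantly'' from the lemma). Your contradiction argument just makes that one-line implication explicit, and your check that Claim~\ref{drei2} supplies the three-colour hypothesis of the lemma is the only verification needed.
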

As the three top parts sum up to $2m-1$, and so do the three bottom parts, we immediately get:

\begin{claim}\label{cla:non-empty-parts}
$\bibot{C_i \cap C_j},\bitop{C_i \cap C_j}\neq \emptyset$ for all distinct $i,j \in \{1,2,3\}$.
\end{claim}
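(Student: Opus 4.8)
The plan is to obtain the claim directly from the size bound of Claim~\ref{cla:eine-seite-kleiner-n/2} by an elementary counting argument, exactly as the sentence preceding the claim anticipates. The point is that, by Claims~\ref{drei2} and~\ref{only-two-colours}, each vertex of $K_{2\m-1,2\m-1}$ sees exactly two of the three colours, so $\bitop{K_{2\m-1,2\m-1}}$ is partitioned into the three pairwise-disjoint sets $\bitop{C_1\cap C_2}$, $\bitop{C_1\cap C_3}$, $\bitop{C_2\cap C_3}$, and similarly $\bibot{K_{2\m-1,2\m-1}}$ is partitioned into the three sets $\bibot{C_i\cap C_j}$, $\{i,j\}\subset\{1,2,3\}$.

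\textbf{Key steps.} First I would record that each of these six sets has, by Claim~\ref{cla:eine-seite-kleiner-n/2}, strictly fewer than $\m$ vertices, hence at most $\m-1$ vertices (being a nonnegative integer). Next, suppose for contradiction that one of the three top sets, say $\bitop{C_i\cap C_j}$, is empty. Then the remaining two top sets together cover all $2\m-1$ vertices of $\bitop{K_{2\m-1,2\m-1}}$, so $2\m-1 \leq 2(\m-1) = 2\m-2$, a contradiction. Therefore all three top sets are nonempty, and the identical argument applied to the bottom partition shows all three bottom sets are nonempty. This is exactly the assertion of the claim.

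\textbf{Main obstacle.} Honestly there is none of substance: this is a one-line pigeonhole consequence of the already-established bound, which is why the authors phrase it as ``we immediately get.'' The only thing worth a second's care is the justification that each side really is partitioned into precisely the three intersection sets with no vertex unaccounted for — but this is immediate from Claim~\ref{only-two-colours} (every vertex sees exactly two colours) together with the three-colour conclusion of Claim~\ref{drei2}, so no genuine difficulty arises.
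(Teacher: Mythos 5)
Your argument is correct and is exactly the computation the paper intends: the three top (resp.\ bottom) parts partition a side of size $2\m-1$, each has at most $\m-1$ vertices by Claim~\ref{cla:eine-seite-kleiner-n/2}, so none can be empty. This matches the paper's one-line justification, so nothing further is needed.
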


The next claim requires some more work. Recall that a subgraph of $H \subset K_{n,n}$ is connected in colour~$i$, if every two vertices of $H$ are connected by a path  of colour $i$ in $H$.

\begin{claim}\label{cla:zwei-seiten-kleiner-n/2}
If the subgraph $C_i$ is connected in colour $i$, then there are distinct $j,k \in \{1,2,3\} \setminus \{i\}$ such that $|\bitop{C_i \cap C_j}| \geq |\bibot{C_i \cap C_k}|$, $|\bibot{C_i \cap C_j}| > |\bitop{C_i \cap C_k}|$ (modulo swapping top and bottom partition classes) and $|V(C_i \cap C_k)|< \m$.
\end{claim}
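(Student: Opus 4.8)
The plan is to reduce the statement to a numerical inequality among the sizes of the four sets $\bitop{C_i\cap C_j}$, and then to finish with a short case distinction. Assume $i=1$ (the other two choices of $i$ are symmetric) and write $a=|\bitop{C_1\cap C_2}|$, $a'=|\bibot{C_1\cap C_2}|$, $b=|\bitop{C_1\cap C_3}|$, $b'=|\bibot{C_1\cap C_3}|$. By Claim~\ref{cla:non-empty-parts} each of $a,a',b,b'$ is at least $1$, and by Claim~\ref{cla:eine-seite-kleiner-n/2} each is at most $\m-1$. Since, by Figure~\ref{fig:b}, the top class of $K_{2\m-1,2\m-1}$ is the disjoint union of $\bitop{C_1\cap C_2}$, $\bitop{C_1\cap C_3}$ and $\bitop{C_2\cap C_3}$, and $|\bitop{C_2\cap C_3}|\le \m-1$ (and likewise for the bottom class), we get $a+b\ge \m$ and $a'+b'\ge \m$.

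The key step, and the one I expect to be the main obstacle, is to use the hypothesis that $C_1$ is connected in colour~$1$ to prove
\[
\min(a,b')+\min(a',b)\ \le\ \m-1 .
\]
Every edge between $\bitop{C_1\cap C_2}$ and $\bibot{C_1\cap C_3}$, and every edge between $\bitop{C_1\cap C_3}$ and $\bibot{C_1\cap C_2}$, has colour~$1$; hence the colour-$1$ subgraph of $C_1$ contains two vertex-disjoint complete bipartite graphs $L=[\bitop{C_1\cap C_2},\bibot{C_1\cap C_3}]$ and $R=[\bitop{C_1\cap C_3},\bibot{C_1\cap C_2}]$ whose vertex sets partition $V(C_1)$. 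As all four parts are non-empty, $C_1$ can be connected in colour~$1$ only if some colour-$1$ edge $e$ lies inside $C_1\cap C_2$ or inside $C_1\cap C_3$; in either case $e$ has one endpoint in $V(L)$ and one in $V(R)$. Concatenating a longest colour-$1$ path of $R$ ending in the $R$-endpoint of $e$, the edge $e$, and a longest colour-$1$ path of $L$ ending in the $L$-endpoint of $e$ yields a colour-$1$ path; since a path in $K_{s,t}$ with one prescribed endpoint can be taken to have at least $2\min(s,t)$ vertices, this path has at least $2\min(a,b')+2\min(a',b)$ vertices (the same bound arises if $e$ lies inside $C_1\cap C_2$). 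As there is no monochromatic path on $2\m$ vertices, the displayed inequality follows.

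Finally I would split according to whether $a\le b'$ or $a>b'$. If $a\le b'$, then $a'\le b$ must hold, for $a'>b$ would give $\min(a,b')+\min(a',b)=a+b\ge\m$, contradicting the displayed inequality; then $a+a'=\min(a,b')+\min(a',b)\le\m-1$, and combining this with $a+b\ge\m$ forces $b>a'$. Taking $j=3$, $k=2$ and swapping the top and bottom classes, the three required inequalities become $b'\ge a$, $b>a'$ and $a+a'<\m$, all of which hold. If instead $a>b'$, then symmetrically $a'>b$ (since $a'\le b$ would give $\min(a,b')+\min(a',b)=b'+a'\ge\m$, contradicting the displayed inequality), so $\min(a,b')+\min(a',b)=b'+b\le\m-1$; now $j=2$, $k=3$, without swapping, works, because the required inequalities read $a\ge b'$, $a'>b$ and $b+b'<\m$. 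Apart from the path-building argument above, all of this is just bookkeeping with the constraints $a+b\ge\m$ and $a'+b'\ge\m$.
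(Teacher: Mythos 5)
Your proof is correct and follows essentially the same route as the paper: connectivity of $C_i$ in colour $i$ yields a colour-$i$ edge $e$ joining the two complete bipartite colour-$i$ graphs $[\bitop{C_i\cap C_j},\bibot{C_i\cap C_k}]$ and $[\bibot{C_i\cap C_j},\bitop{C_i\cap C_k}]$, and concatenating long alternating paths through $e$ is then played off against the absence of a path on $2\m$ vertices together with Claim~\ref{cla:eine-seite-kleiner-n/2} and Claim~\ref{cla:non-empty-parts}. The only difference is organisational: you compress the paper's two tailored path constructions into the single inequality $\min(a,b')+\min(a',b)\le \m-1$ and finish with a numerical case analysis, which checks out.
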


\begin{proof}
Suppose that $C_i$ is connected in colour $i$ and let $j,k \in \{1,2,3\} \setminus \{i\}$ be such that $|\bitop{C_i \cap C_j}| \geq |\bibot{C_i \cap C_k}|$ (after possible swapping top and bottom partition). By  Claim~\ref{cla:non-empty-parts}, and as $C_i$ is connected in colour $i$, we find an edge $e_i \in E(C_i \cap C_j) \cup E(C_i \cap C_k)$ of colour $i$.
 Choose  an even path $P \subset [\bitop{C_i \cap C_j}, \bibot{C_i \cap C_k}]$ which covers $\bibot{C_i \cap C_k}$ and  ends in one of the vertices of $e_i$.

For the first part of the claim, assume to the contrary that  $|\bibot{C_i \cap C_j}| \leq |\bitop{C_i \cap C_k}|$. 
Take an even path $P' \subset [\bibot{C_i \cap C_j}, \bitop{C_i \cap C_k}]$ which covers $\bibot{C_i \cap C_j}$  and ends in a vertex of $e_i$.
Since $P$ and $P'$ are joined by $e_i$ we infer that $|\bibot{C_i \cap C_k}|+ |\bibot{C_i \cap C_j}| < \m$. 
But then $|\bibot{C_j \cap C_k}| \geq \m$ in contradiction to Claim~\ref{cla:eine-seite-kleiner-n/2}.
This shows that $|\bibot{C_i \cap C_j}| > |\bitop{C_i \cap C_k}|$, as desired.

This allows us to pick an even path $P'' \subset [\bibot{C_i \cap C_j}, \bitop{C_i \cap C_k}]$ of colour $i$, which covers $\bitop{C_i \cap C_k}$ and  ends in one of the vertices of $e_i$.
Join $P$ and $P''$ via $e_i$ to obtain a colour $i$ path of length at least $2 |\bitop{C_i \cap C_k}| + 2 |\bibot{C_i \cap C_k}| =2|V(C_i \cap C_k)|$. So by our assumption that there is no monochromatic path of length $2\m$, we obtain $|V(C_i \cap C_k)|<\m$, as desired.
\end{proof}

\begin{claim}\label{cla:hoechstens-eine-kleiner-n/2}
For at most one pair of distinct indices $i,j \in\{1,2,3\}$ it holds that $|V(C_i \cap C_j)|<\m$.
\end{claim}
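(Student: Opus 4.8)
The plan is to derive a contradiction from assuming that \emph{two} distinct pairs, say $\{1,2\}$ and $\{1,3\}$, both satisfy $|V(C_i\cap C_j)|<\m$; the case that the two pairs are $\{1,2\}$ and $\{2,3\}$ (i.e.\ sharing a different common colour, or even disjoint pairs) should reduce to this one after relabelling, since among three pairs any two share an index. So suppose $|V(C_1\cap C_2)|<\m$ and $|V(C_1\cap C_3)|<\m$. By Claim~\ref{cla:non-empty-parts} all six sets $\bitop{C_i\cap C_j}$, $\bibot{C_i\cap C_j}$ are non-empty, and by Claim~\ref{cla:eine-seite-kleiner-n/2} each is smaller than $\m$. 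Since the three top parts sum to $2\m-1$ and likewise the three bottom parts, from $|\bitop{C_1\cap C_2}|+|\bitop{C_1\cap C_3}|<2\m$ we get $|\bitop{C_2\cap C_3}|>... $ — more usefully, $|\bitop{C_2\cap C_3}|=2\m-1-|\bitop{C_1\cap C_2}|-|\bitop{C_1\cap C_3}|$, and similarly for the bottom. The first thing I would extract is that $C_2\cap C_3$ is then ``large'' on at least one side: indeed $|V(C_2\cap C_3)| = |V(K_{2\m-1,2\m-1})| - |V(C_1\cap C_2)| - |V(C_1\cap C_3)| > (4\m-2) - 2\m = 2\m-2$, so $\max(|\bitop{C_2\cap C_3}|,|\bibot{C_2\cap C_3}|)\geq \m$, which already contradicts Claim~\ref{cla:eine-seite-kleiner-n/2} unless I have miscounted the exceptional/leftover vertices.

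The subtlety is that $V(C_1\cap C_2)$, $V(C_1\cap C_3)$, $V(C_2\cap C_3)$ need not partition $V(K_{2\m-1,2\m-1})$ a priori — but under the standing assumptions (Claims~\ref{drei2} and~\ref{only-two-colours}, i.e.\ the situation of Figure~\ref{fig:b}), every vertex sees exactly two of the three colours, so in fact these three sets \emph{do} partition the vertex set, and $|V(C_1\cap C_2)|+|V(C_1\cap C_3)|+|V(C_2\cap C_3)| = 4\m-2$. Thus $|V(C_2\cap C_3)| = 4\m-2 - |V(C_1\cap C_2)| - |V(C_1\cap C_3)| > 4\m-2 - (\m-1) - (\m-1) = 2\m$, so one of $\bitop{C_2\cap C_3}$, $\bibot{C_2\cap C_3}$ has size at least $\m$, contradicting Claim~\ref{cla:eine-seite-kleiner-n/2}. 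That is the whole argument; the main step is just recognising that Figure~\ref{fig:b} forces the three intersection sets to tile the vertex set, after which it is a counting one-liner.

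The step I expect to need the most care is the reduction of an arbitrary ``two pairs'' configuration to the labelled one above. If the two small pairs are $\{1,2\}$ and $\{1,3\}$ we are in the situation I described. If instead they were $\{1,2\}$ and $\{3,1\}$ — same thing — or if someone tried $\{1,2\}$ and $\{2,3\}$, a simple relabelling of colours puts it in the first form; and there is no way to choose two of the three pairs that are disjoint, since there are only three colours. So no genuine case analysis is needed, but I would state the relabelling explicitly to avoid any appearance of loss of generality. A secondary point worth a sentence: I should double-check the exact count $|V(K_{2\m-1,2\m-1})| = 2(2\m-1) = 4\m-2$ and that ``sees exactly two colours'' (Claim~\ref{only-two-colours}) together with ``three colours total'' (Claim~\ref{drei2}) is exactly what makes $V(C_1\cap C_2), V(C_1\cap C_3), V(C_2\cap C_3)$ pairwise disjoint with union everything — disjointness because a vertex in two of them would see all three colours. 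With that in hand the contradiction with Claim~\ref{cla:eine-seite-kleiner-n/2} is immediate and the proof is complete.
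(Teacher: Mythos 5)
Your proposal is correct and follows essentially the same route as the paper: both argue that if two of the three intersections had fewer than $\m$ vertices, then $C_2\cap C_3$ would have at least $2\m$ vertices (using that, under Claims~\ref{drei2} and~\ref{only-two-colours}, the three sets $V(C_i\cap C_j)$ partition $V(K_{2\m-1,2\m-1})$), so one of its sides would have size at least $\m$, contradicting Claim~\ref{cla:eine-seite-kleiner-n/2}. Only a cosmetic remark: since $|V(C_1\cap C_2)|,|V(C_1\cap C_3)|\le \m-1$, you get $|V(C_2\cap C_3)|\ge 2\m$ rather than the strict inequality you wrote, which is all you need anyway.
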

\begin{proof}
Suppose, on the contrary, that $C_1 \cap C_2$ and $C_1 \cap C_3$ each have  less than $\m$ vertices. 
Then $C_2 \cap C_3$ has at least $2\m$ vertices. 
Therefore one of its partition classes has size at least $\m$, a contradiction to Claim~\ref{cla:eine-seite-kleiner-n/2}. 
\end{proof}

We are now ready for the last step of the proof of  Theorem~\ref{thm:ramsey}.
We start by observing that if for some $i \in \{1,2,3\}$, the subgraph $C_i$ is not connected in colour $i$, then (letting $j,k$ be the other two indices) the edges of the  graphs $C_i \cap C_j$ and $C_i \cap C_k$ are all of colour $j$, or colour $k$, respectively, and thus both $C_j$ and $C_k$ are connected in colour $j$, or colour $k$, respectively. So we can assume that there are at least two distinct indices $j,k \in \{1,2,3\}$, such that the subgraphs $C_j$, $C_k$ are  connected in colour $j$, or in colour $k$, respectively. Say these indices are $1$ and $3$.

We use Claim~\ref{cla:zwei-seiten-kleiner-n/2} twice: 
For $C_1$ it yields that one of $C_1 \cap C_3$ and $C_1 \cap C_2$ has  less than $\m$ vertices.
For $C_3$ it yields that one of $C_1 \cap C_3$ and $C_2 \cap C_3$ has  less than $\m$ vertices.
So by Claim~\ref{cla:hoechstens-eine-kleiner-n/2} we get that necessarily, 
\begin{equation}\label{xxxxx}
|V(C_1 \cap C_3)|<\m, \ |V(C_1 \cap C_2)|\geq \m, \ |V(C_2 \cap C_3)|\geq \m.
\end{equation}
 Again using Claim~\ref{cla:zwei-seiten-kleiner-n/2}, this implies that $C_2$ is not connected in colour $2$. So by Claim~\ref{cla:non-empty-parts} and the fact that the edges between $C_1 \cap C_2$ and $C_2 \cap C_3$ are complete bipartite in colour 2, we have that  \begin{equation}\label{xxxxx+}\text{$C_1 \cap C_2$ is complete bipartite in colour 1.}
 \end{equation}
Also, in  light of~\eqref{xxxxx}, Claim~\ref{cla:zwei-seiten-kleiner-n/2} with input $i=1$ gives $j=2$ and $k = 3$ and thus $|\bitop{C_1 \cap C_2}| \geq |\bibot{C_1 \cap C_3}|$, $|\bibot{C_1 \cap C_2}| > |\bitop{C_1 \cap C_3}|$ (after possibly swapping top and bottom partition).
Choose two balanced paths of colour 1:
The first path $P \subset [\bitop{C_1 \cap C_2}, \bibot{C_1 \cap C_3}]$ such that it covers $\bibot{C_1 \cap C_3}$.
The second path $P' \subset [\bibot{C_1 \cap C_2}, \bitop{C_1 \cap C_3}]$ such that it covers $\bitop{C_1 \cap C_3}$.
As by~\eqref{xxxxx+} we know that $C_1 \cap C_2$ is complete bipartite in colour 1, we can join $P$ and $P'$ with a  path of colour 1 in $C_1 \cap C_2$, such that the resulting path $P''$ covers one of  $\bitop{C_1}$, $\bibot{C_1}$.
Since by assumption, $P''$ has less than~$2 \m$ vertices, we obtain that $\bitop{C_2 \cap C_3}$ or $\bibot{C_2 \cap C_3}$ has  size  at least $\m$, a contradiction to Claim~\ref{cla:eine-seite-kleiner-n/2}.
This finishes the proof of Theorem~\ref{thm:ramsey}.

\bibliographystyle{amsplain}
\bibliography{monochromatic-cover}

\end{document}